\allowdisplaybreaks \numberwithin{equation}{section}
\numberwithin{equation}{section}
\newtheorem{theorem}{Theorem}[section]
\newtheorem{proposition}[theorem]{Proposition}
\newtheorem{lemma}[theorem]{Lemma}
\theoremstyle{definition}
\theoremstyle{remark}
\newcommand{\pa}{\partial}
\newcommand{\la}{\lambda}
\newcommand{\ds}{\displaystyle}
\newcommand{\beam}{\begin{eqnarray}}
\newcommand{\eeam}{\end{eqnarray}}
\newcommand{\beao}{\begin{eqnarray*}}
\newcommand{\eeao}{\end{eqnarray*}}
\newcommand{\barr}{\begin{array}}
\newcommand{\earr}{\end{array}}
\newcommand{\beqq}{\begin{equation}}
\newcommand{\eeqq}{\end{equation}}
\begin{document}

\title[Solutions for fractional operator problem
 ]
{Solutions for fractional operator problem via local Pohozaev identities}

\author{Yuxia Guo,   Ting Liu and  Jianjun Nie
}

\address{Department of Mathematical Science, Tsinghua University, Beijing, P.R.China}
\email{yguo@tsinghua.edu.cn}

\address{Department of Mathematical Science, Tsinghua University, Beijing, P.R.China}
\email{liuting17@mails.tsinghua.edu.cn}

\address{Institute of Mathematics, Academy of Mathematics and Systems Science, Chinese Academy of Sciences, Beijing 100190, P.R.China}
\email{niejjun@126.com}

\thanks{This work is supported by NSFC(11771235,11801545)}
\maketitle

\begin{abstract}
We consider the following fractional Schr\"{o}dinger equation involving critical exponent:
\begin{equation*}
\left\{\begin{array}{ll}
(-\Delta)^s u+V(|y'|,y'')u=u^{2^*_s-1} \ \hbox{ in } \ \mathbb{R}^N, \\
u>0, \ y \in \mathbb{R}^N,
\end{array}\right. \eqno{(P)}
\end{equation*}
where $s\in(\frac{1}{2}, 1)$, $(y',y'')\in \mathbb{R}^2\times \mathbb{R}^{N-2}$, $V(|y'|,y'')$ is a bounded nonnegative function with a weaker symmetry condition. We prove the existence of infinitely many solutions for the above problem by  a finite dimensional reduction method combining various Pohazaev identies.

\end{abstract}

\vspace{0.5cm}

{\bf Keywords:} Fractional Laplacian, Critical exponent, Various Pohozaev identies, Infinitely many solutions.

{\bf AMS} Subject Classification: 35B05; 35B45.

\section{Introduction}

In this paper, we are concerned with the following problem:
\begin{equation}\label{problem1}
\left\{\begin{array}{ll}
(-\Delta)^s u+V(|y'|,y'')u=u^{2^*_s-1}, \ \hbox{ in } \ \mathbb{R}^N, \\
u>0, \ y \in \mathbb{R}^N,
\end{array}\right.
\end{equation}
where $s\in(\frac{1}{2}, 1)$ and $2^*_s=\frac{2N}{N-2s}$ is the  critical Sobolev exponent. For any $s\in (0,1)$, $(-\Delta)^s$ is the fractional Laplacian in $\mathbb{R}^N$, which is a nonlocal operator defined as:
\begin{equation} \label{aum23}
\begin{aligned}
(-\Delta)^su(y)=c(N,s)P.V.\int_{\mathbb{R}^N}\frac{u(y)-u(x)}{|x-y|^{N+2s}}dx=c(N,s)\lim\limits_{\epsilon\rightarrow 0^+}\int_{\mathbb{R}^N\setminus B_\epsilon(x)}\frac{u(y)-u(x)}{|x-y|^{N+2s}}dx,
\end{aligned}
\end{equation}
where $P.V.$ is the Cauchy principal value and $c(N,s)$  is a constant depending on $N$ and $s.$ This operator is well defined in $C^{1,1}_{loc}\cap \mathcal {L}_s$, where $\mathcal {L}_s=\{u\in L^1_{loc}:\int_{\mathbb{R}^N}\frac{|u(x)|}{1+|x|^{N+2s}}dx<\infty\}$.  For more details on the fractional Laplacian, we referee to \cite{Eleonora} and the references therein.

The Fractional Laplacian operator appears in dives areas including biological modeling, physics and mathematical finances, and can be regarded as the infinitesimal generator of a stable Levy process (see for example \cite{ad}). From the view point of mathematics, an important feature of the fractional Laplacian operator is its nonlocal property, which makes it more challenge than the classical Laplacian operator. Thus,  problems with the fractional Laplacian have been extensively studied, both for
the pure mathematical research and in view of concrete real-world applications, see for example, \cite{BBarrios}-\cite{XCabre}, \cite{CS2007}, \cite{Quaas}, \cite{Servadeia}, \cite{TJin}, \cite{JTan1}, \cite{JTan}, \cite{SYan} and the references therein.

Solutions of \eqref{problem1} are related to the existence of standing wave solutions to the following fractional Schr\"{o}dinger equation
\begin{equation}
\left\{\begin{array}{ll}
i\partial_t\Psi+(-\Delta)^s\Psi=F(x,\Psi) \ \hbox{ in } \ \mathbb{R}^N, \\
\lim\limits_{|x|\rightarrow\infty}|\Psi(x,t)|=0, \ \hbox{for \ all} \ t>0.
\end{array}\right.
\end{equation}
That is, solutions with the form $\Psi(x,t)=e^{-ict}u(x)$, where c is a constant.

In this paper, under a weaker symmetry condition for $V(y)$,  we will construct multi-bump solutions of \eqref{problem1} through a  finite dimensional reduction method combining with various Pohozaev identies. More precisely,  we consider the case $V(y)=V(|y'|,y'')=V(r,y'')$, $y=(y',y'')\in \mathbb{R}^2\times\mathbb{R}^{N-2}$ and assume that:

\vskip8pt

{\it{($V$) \  Let $V(y)\geq0$ is a bounded function and belongs to $C^2(\mathbb{R}^N)$. Suppose that $r^{2s}V(r,y'')$ has a critical point $(r_0,y_0'')$ satisfying $r_0 > 0$, $ V(r_0,y_0'') > 0$ and
$deg \big(\nabla \big(r^{2s}V(r,y'')\big), (r_0,y_0'')\big) \neq 0.$}}

\vskip8pt

Before the statement of the main results. Let us first introduce some notations.
Denote $D^{s}(\mathbb{R}^N)$ the completion of $C_0^\infty(\mathbb{R}^N)$ under the norm $\|(-\Delta)^{\frac{s}{2}}u\|_{L^2(\mathbb{R}^N)}$, where $\|(-\Delta)^{\frac{s}{2}}u\|_{L^2(\mathbb{R}^N)}$ is defined by $(\int_{\mathbb{R}^N}|\xi|^{2s}|\mathcal {G}u(\xi)|^2 d\xi)^{\frac{1}{2}}$, and $\mathcal {G}u$ is the Fourier transformation of $u$:
$$\mathcal {G}u(\xi)=\frac{1}{(2\pi)^{\frac{N}{2}}}\int_{\mathbb{R}^N}e^{-i\xi\cdot x}u(x)dx.$$
We will construct the solutions in  following  the energy space:
$$H^{s}(\mathbb{R}^N)=\{u\in D^{s}(\mathbb{R}^N): \int_{\mathbb{R}^N} V(y)u^2dy<+\infty\}$$
with the norm:
$$\|u\|_{H^{s}(\mathbb{R}^N)}=\big(\|(-\Delta)^{\frac{s}{2}}u\|^2_{L^2(\mathbb{R}^N)}+\int_{\mathbb{R}^N} V(y)u^2dy\big)^{\frac{1}{2}},$$

We define the functional $I$ on $H^{s}(\mathbb{R}^N)$ by:
\begin{equation}\label{func}
I(u)=\frac{1}{2}\int_{\mathbb{R}^N}|(-\Delta)^{\frac{s}{2}}u|^2dy+\frac{1}{2}\int_{\mathbb{R}^N}V(|y'|,y'')u^2dy
-\frac{1}{2^*_s}\int_{\mathbb{R}^N}(u)_+^{2^*_s}dy,
\end{equation}
where $(u)_+=\max(u,0)$. Then the solutions of problem \eqref{problem1} correspond to the critical points of the functional $I.$

It is well known that the following functions $$U_{x,\lambda}(y)=C(N,s)(\frac{\lambda}{1+\lambda^2|y-x|^2})^{\frac{N-2s}{2}}, \ \ \lambda>0, \ x\in \mathbb{R}^N,$$ where $C(N,s)=2^{\frac{N-2s}{2}}\frac{\Gamma(\frac{N+2s}{2})}{\Gamma(\frac{N-2s}{2})}$,
are the only solutions for the problem (see \cite{{ELieb}}):
\begin{equation}\label{single}(-\Delta)^s u=u^{\frac{N+2s}{N-2s}}, \ \ u>0 \ \hbox{ in } \ \mathbb{R}^N.\end{equation}

Define
\begin{equation*}
\begin{aligned}
H_s=\{u: & u\in H^s(\mathbb{R}^{N}), u(y_1, y_2, y'')=u(y_1, -y_2, y''), \\
 &u(rcos(\theta+\frac{2\pi j}{k}),rsin(\theta+\frac{2\pi j}{k}),y'')=u(rcos\theta,rsin\theta,y'')\}.
\end{aligned}
\end{equation*}
Let $$x_j=(\overline{r}cos\frac{2(j-1)\pi}{k},\overline{r}sin\frac{2(j-1)\pi}{k},\overline{y}''), \ \ j=1,\ldots,k.$$

To construct the solution of \eqref{problem1}, we hope to use $U_{x,\lambda}(y)$  as an approximation solution. However,  the decay of $U_{x_j,\lambda}$ is not fast enough for us when the dimension  $N\leq 6s$. So, we need to cut off this function. Let $\delta>0$ be a small constant such that $r^{2s}V(r,y'')>0$ if $|(r,y'')-(r_0,y''_0)|\leq10\delta$. Take $\zeta(y)=\zeta(r,y'')$ be a $C^2$ smooth function satisfying $\zeta=1$ if $|(r,y'')-(r_0, y''_0)|\leq \delta$, $\zeta=0$ if $|(r,y'')-(r_0, y''_0)|\geq 2\delta$, $|\nabla\zeta|\leq C$ and $0\leq\zeta\leq1$.
Denote
$$Z_{x_j,\lambda}=\zeta U_{x_j,\lambda}, \ Z_{\overline{r}, \overline{y}'',\lambda}^*=\sum_{j=1}^kU_{x_j,\lambda}, \  Z_{\overline{r}, \overline{y}'',\lambda}=\sum_{j=1}^kZ_{x_j,\lambda}.$$
But this will bring us a new difficult. In the proof of Proposition \ref{proposition2}, we have to deal with $(-\Delta)^s(\zeta(y)U_{x_j,\lambda}(y))$. By \eqref{aum23}, we can deduce that
$$(-\Delta)^s\big(\zeta(y)U_{x_j,\lambda}(y)\big)=\zeta(y)U_{x_j,\lambda}^{2_s^*-1}(y)+
c(N,s)\lim\limits_{\epsilon\rightarrow 0^+}\int_{\mathbb{R}^N\setminus B_\epsilon(x)}\frac{\big(\zeta(y)-\zeta(x)\big)U_{x_j,\lambda}(x)}{|x-y|^{N+2s}}dx.$$
In order to obtain a good enough result, we need to calculate the last principal value every carefully (see Lemma \ref{lemma6}).

In the following of the present paper, we always assume that $\tau=\frac{N-4s}{N-2s}$, $k>0$ is a large integer, and $\lambda\in[L_{0}k^{\frac{N-2s}{N-4s}},  L_{1}k^{\frac{N-2s}{N-4s}}]$ for some constants $L_1>L_0>0$,
$|(\overline{r},\overline{y}'')-(r_0, y''_0)|\leq \theta$ with $\theta>0$ is a small constant, and
$$\Omega_j=\{y: y=(y',y'')\in \mathbb{R}^2\times\mathbb{R}^{N-2}, \langle \frac{y'}{|y'|},\frac{x_j'}{|x_j'|}\rangle\geq cos\frac{\pi}{k}\}.$$

Our main result is:
\begin{theorem}\label{th:1}
Suppose that $s\in(\frac{1}{2}, 1)$ and $N>4s+2\tau$. If $V(y)$
satisfies the condition $(V)$, then there is an integer $k_{0}>0$,
such that for any integer $k\geq k_{0}$, problem \eqref{problem1} has
a solution $u_k$ of the form
$$u_{k}=Z_{\overline{r}_k, \overline{y}''_k,\lambda_k}+\phi_{k}$$ where $\phi_{k}\in
H_{s}$, $\lambda_{k}\in[L_{0}k^{\frac{N-2s}{N-4s}}, L_{1}k^{\frac{N-2s}{N-4s}}]$, and as $k\rightarrow\infty$, $\lambda_k^{-\frac{N-2s}{2}}\|\phi_{k}\|_{L^{\infty}}\rightarrow 0$,
$(\overline{r}_k,\overline{y}''_k)\rightarrow(r_0,y''_0).$

\end{theorem}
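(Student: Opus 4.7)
The plan is a Lyapunov--Schmidt finite-dimensional reduction carried out in the symmetric space $H_s$, followed by a critical point analysis of the finite-dimensional parameter map via local Pohozaev identities. I would seek a solution of the form $u_k = Z_{\overline{r}, \overline{y}'', \lambda} + \phi$ with $(\overline{r}, \overline{y}'')$ in a small neighbourhood of $(r_0, y_0'')$, $\lambda \in [L_0 k^{(N-2s)/(N-4s)}, L_1 k^{(N-2s)/(N-4s)}]$, and $\phi \in H_s$ required to satisfy orthogonality conditions against the directional derivatives $\partial_{\overline{r}} Z$, $\partial_{\overline{y}''} Z$, $\partial_\lambda Z$, which span the approximate kernel of the linearized operator at $Z$; the symmetry imposed in $H_s$ collapses the single bubble's $N+1$ modulation modes to these three.

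For the reduction step I would equip $H_s$ with the weighted $L^\infty$ norm
\begin{equation*}
\|\phi\|_\ast = \sup_{y \in \mathbb{R}^N} |\phi(y)| \Big(\sum_{j=1}^{k} (1 + \lambda|y - x_j|)^{-\frac{N-2s}{2} - \tau}\Big)^{-1},
\end{equation*}
prove uniform invertibility of the linearized operator restricted to the orthogonal complement of the kernel, and close the projected equation by a contraction mapping argument. The error $I'(Z)$ splits into a local piece coming from $V(|y'|, y'') Z$, a bubble--bubble interaction of size $\sum_{i \neq j} U_{x_i,\lambda}(x_j) \sim (k/(\overline{r} \lambda))^{N-2s}$, and a remainder produced by the cut-off $\zeta$. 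The condition $N > 4s + 2\tau$ together with the prescribed window $\lambda \sim k^{(N-2s)/(N-4s)}$ makes these contributions of comparable, summable size in the weighted norm, and forces $\lambda_k^{-(N-2s)/2}\|\phi_k\|_{L^\infty} \to 0$ as demanded by the theorem.

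For the finite-dimensional matching step I would not try to Taylor-expand $F(\overline{r}, \overline{y}'', \lambda) = I(Z + \phi)$ directly, since the cut-off bubble is not globally a near-solution. Instead I would derive \emph{local} Pohozaev identities by testing the equation satisfied by $u = Z + \phi$ against $\partial_\lambda u$, $\partial_{\overline{r}} u$ and $\partial_{y_\ell''} u$, integrating over a fixed small ball around $(r_0, y_0'')$, and using the nonlocal integration-by-parts formulas together with Lemma \ref{lemma6}. Each identity reduces to an equation whose leading part involves $\partial_r(r^{2s} V)$ or $\partial_{y''_\ell}(r^{2s} V)$ at $(\overline{r}, \overline{y}'')$ in the positional equations, and an interaction term balanced against the $\lambda$-derivative of the potential contribution in the $\lambda$ equation. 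A Brouwer degree argument, based on $\deg(\nabla(r^{2s} V), (r_0, y_0'')) \neq 0$ from hypothesis $(V)$, then produces a zero of this reduced system inside the admissible parameter region, and the corresponding $u_k = Z + \phi$ is the sought solution.

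The principal obstacle is the nonlocal action of $(-\Delta)^s$ on the cut-off bubble $\zeta U_{x_j, \lambda}$: the singular integral
\begin{equation*}
c(N,s) \lim_{\epsilon \to 0^+} \int_{\mathbb{R}^N \setminus B_\epsilon(x)} \frac{(\zeta(y) - \zeta(x)) \, U_{x_j,\lambda}(x)}{|x-y|^{N+2s}} \, dx
\end{equation*}
is supported essentially on all of $\mathbb{R}^N$ and must be controlled in the weighted norm uniformly in $k$, with particular care in the overlap regions $\Omega_i \cap \Omega_j$ where the tails of neighbouring bubbles interfere. Obtaining sharp pointwise bounds for this term, summing them over $j = 1, \ldots, k$, and verifying that the resulting error is dominated by the $V Z$ and bubble--bubble contributions already discussed, is precisely the content of Lemma \ref{lemma6} and is the technical heart of the argument; once that lemma is in hand, both the linear theory for the reduction and the Pohozaev matching proceed along lines familiar from the local case, but each classical step must be revisited to track the fractional tails correctly.
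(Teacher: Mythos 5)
Your proposal follows the same overall architecture as the paper's proof: a Lyapunov--Schmidt reduction in the symmetric space $H_s$ with a weighted $L^\infty$ norm, followed by a Brouwer-degree argument on a finite-dimensional system obtained via \emph{local} Pohozaev identities for the Caffarelli--Silvestre extension, with Lemma \ref{lemma6} supplying the delicate control of the nonlocal error produced by the cutoff $\zeta$. That is the right skeleton, and your identification of the cutoff commutator as the technical heart, and of the degree hypothesis as what closes the reduced system, both match the paper.

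However, two points of imprecision are worth flagging because they are exactly where the argument could go wrong if implemented as written. First, you say the local Pohozaev identities arise by ``testing the equation against $\partial_\lambda u$, $\partial_{\overline{r}} u$ and $\partial_{y_\ell''} u$ over a small ball.'' Testing against the parameter derivatives $\partial_{\overline{r}}u$, $\partial_{\overline{y}''}u$ just reproduces the projected equations \eqref{energyexpansion12}--\eqref{energyexpansion13}, which the paper shows are \emph{not} good enough: the term $O(k\lambda\|\phi\|_*^2)$ is of the same size as the main term since only $\|\phi\|_*\lesssim\lambda^{-s-\sigma}$ is available. What actually works is to multiply the extension equation $\mathrm{div}(t^{1-2s}\nabla\tilde u_k)=0$ by the \emph{spatial} translation vector fields $\partial_{y_i}\tilde u_k$ ($i=3,\dots,N$) and by the dilation field $\langle\nabla\tilde u_k,Y\rangle$, integrate over the half-ball $\mathcal B^+_\rho\subset\mathbb{R}^{N+1}_+$, and integrate by parts; the interior terms collapse to integrals of $\nabla V\cdot u_k^2$ against which the boundary terms are $O(k/\lambda^{2s+\sigma})$ by Lemma \ref{le:lemma appendixB.4}, and that is what restores the main term. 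Only the $\lambda$-equation is handled via the projection $\langle I'(u_k),\partial_\lambda Z\rangle$. Second, your stated motivation (``the cut-off bubble is not globally a near-solution'') is not the reason: the cutoff error is controllable in $\|\cdot\|_{**}$ (Lemma \ref{lemma6}); the true obstacle is that the estimate on $\phi$ is only strong enough for the $\lambda$-projection, not for the $\overline r,\overline y''$-projections. Finally, minor but worth noting: the weighted norm you wrote omits the $\lambda^{-(N-2s)/2}$ normalisation that the paper builds in, which is what makes the conclusion $\lambda_k^{-(N-2s)/2}\|\phi_k\|_{L^\infty}\to 0$ read off directly from $\|\phi\|_*\to 0$.
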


We will prove Theorem \ref{th:1} by finite dimensional reduction method combining various Pohozaev identities. Finite dimensional reduction method has been extensively used to construct solutions for equations with critical growth. We referee to \cite{bc}, \cite{cny}-\cite{clin2}, \cite{YGuo}, \cite{Guo2016}, \cite{GPY}, \cite{liyy19},
\cite{yylinwm}, \cite{niu},\cite{nyan}, \cite{szhang}, \cite{yan}
and references therein. Roughly speaking, the outline to carry out the reduction argument is as follows: We first construct an good enough approximation solution and linearized the original problem around the approximation solution. Then we solve the corresponding finite dimensional problem to obtain a true solution. To finish the second step, we have to obtain a better estimate for the error term. In this paper, since the operator is nonlocal and the potential function is assumed to have weak symmetry, we have to modify both steps.

\vskip8pt

We introduce the following norms:
$$\|u\|_{*}=\sup\limits_{y\in\mathbb{R}^N}(\sum\limits_{j=1}^k\frac{1}{(1+\lambda|y-x_j|)^{\frac{N-2s}{2}+\tau}})^{-1}\lambda^{-\frac{N-2s}{2}}|u(y)|$$
and
$$\|f\|_{**}=\sup\limits_{y\in\mathbb{R}^N}(\sum\limits_{j=1}^k\frac{1}{(1+\lambda|y-x_j|)^{\frac{N+2s}{2}+\tau}})^{-1}\lambda^{-\frac{N+2s}{2}}|f(y)|.$$

We first use $Z_{\overline{r}, \overline{y}'',\lambda}$ as an approximate solution to obtain a unique function $\phi(\overline{r}, \overline{y}'',\lambda)$, then the problem of finding critical points for $I(u)$ can be reduced to that of finding critical points of $F(\overline{r},\overline{y}'',\lambda)=I(Z_{\overline{r}, \overline{y}'',\lambda}+\phi(\overline{r}, \overline{y}'',\lambda))$. Then in the second step, we solve the corresponding finite dimensional problem to obtain a solution. However, in the first step, we can only obtain $\|\phi\|_*\leq\frac{C}{\lambda^{s+\sigma}}$ (see Proposition \ref{proposition2}).  From Lemma \ref{exp2} and \ref{exp3}, we know that
\begin{equation} \label{energyexpansion11}
\begin{aligned}
\frac{\partial F}{\partial\lambda}=\frac{\partial I(Z_{\overline{r},\overline{y}'',\lambda})}{\partial\lambda}+O(k\lambda^{-1}\|\phi\|^2_*)=k\left(-\frac{B_1}{\la^{2s+1}}V(\bar{r},\bar{y}'')+ \frac{B_3k^{N-2s}}{\la^{N-2s+1}}+O\big(\frac{1}{\la^{2s+1+\sigma}}\big)\right),
\end{aligned}
\end{equation}
\begin{equation} \label{energyexpansion12}
\begin{aligned}
\frac{\partial F}{\partial\overline{r}}
=&\frac{\partial I(Z_{\overline{r},\overline{y}'',\lambda})}{\partial\overline{r}}+O(k\lambda\|\phi\|^2_*)\\
=&k\left(\frac{B_1}{\lambda^{2s}}\frac{\partial V(\overline{r},\overline{y}'')}{\partial\overline{r}}+\sum\limits_{j=2}^k\frac{B_2}{\overline{r}\lambda^{N-2s}|x_1-x_j|^{N-2s}}+O(\frac{1}{\lambda^{s+\sigma}})\right),
\end{aligned}
\end{equation}
and
\begin{equation} \label{energyexpansion13}
\begin{aligned}
\frac{\partial F}{\partial\overline{y}''_j}=\frac{\partial I(Z_{\overline{r},\overline{y}'',\lambda})}{\partial\overline{y}''_j}+O(k\lambda\|\phi\|^2_*)=k\left(\frac{B_1}{\lambda^{2s}}\frac{\partial V(\overline{r},\overline{y}'')}{\partial\overline{y}''_j}+O(\frac{1}{\lambda^{s+\sigma}})\right).
\end{aligned}
\end{equation}
Note that the estimate of $\phi$ is only good enough for the expansion \eqref{energyexpansion11}. But it destroys the main terms in the expansions of  \eqref{energyexpansion12} and \eqref{energyexpansion13}.
To overcome this difficulty, following  the idea in Peng, Wang and Yan \cite{PWY},  instead of studying \eqref{energyexpansion12} and \eqref{energyexpansion13}, we turn to prove that if $(\bar r, \bar{y}'', \lambda)$ satisfies the following local Pohozaev identities:
\begin{eqnarray}\label{firstpohozaevidentity11}
\begin{aligned}
&\quad-\int_{\partial''\mathcal B^{+}_{\rho}}t^{1-2s}\frac{\partial \tilde{u}_{k}}{\partial \nu}\frac{\partial \tilde{u}_{k}}{\partial y_{i}}+\frac{1}{2}\int_{\partial''\mathcal B^{+}_{\rho}}t^{1-2s}|\nabla \tilde{u}_{k}|^{2}\nu_{i}\\
&=\int_{B_{\rho}}\big(-V(r,y'')u_k+(u_k)_+^{2^*_s-1}\big)\frac{\partial u_k}{\partial y_i}, \ \ \ \ \ \ i=3,\ldots, N,
\end{aligned}
\end{eqnarray}
and
\begin{eqnarray}\label{secondtpohozaevidentity11}
\begin{aligned}
&\quad-\int_{\partial''\mathcal B^{+}_{\rho}}t^{1-2s}\langle\nabla \tilde{u}_k, Y\rangle \frac{\partial\tilde{u}_k}{\partial\nu} +\frac{1}{2}\int_{\partial''\mathcal B^{+}_{\rho}}t^{1-2s}|\nabla \tilde{u}_k|^{2}\langle Y,\nu\rangle +\frac{2s-N}{2}\int_{\partial\mathcal B^{+}_{\rho}}t^{1-2s}\frac{\partial\tilde{u}_k}{\partial\nu} \tilde{u}_k\\
&=\int_{ B_{\rho}}\big(-V(r,y'')u_k+(u_k)_+^{2^*_s-1}\big)\langle y, u_k\rangle,
\end{aligned}
\end{eqnarray}
where $u_k=Z_{\overline{r}, \overline{y}'', \lambda}+\phi$, $\tilde{u}_k$  is the extension of $u_k$ (see below \eqref{poisson involution}),
$$ \mathcal B^{+}_{\rho}=\{Y=(y,t):|Y-(r_0,y''_0,0)|\leq\rho \ \hbox{and} \ t>0  \}\subseteq {\Bbb R}_{+}^{N+1},$$
$$ \partial'\mathcal B^{+}_{\rho}=\{Y=(y,t):|y-(r_0,y''_0)|\leq\rho,t=0\}\subseteq {\Bbb R}^{N},$$
$$ \partial''\mathcal B^{+}_{\rho}=\{Y=(y,t):|Y-(r_0,y''_0,0)|=\rho,t>0 \}\subseteq {\Bbb R}_{+}^{N+1},$$
$$\partial\mathcal B^{+}_{\rho}=\partial'\mathcal B^{+}_{\rho}\cup\partial''\mathcal B^{+}_{\rho},$$
$$ B_{\rho}=\{y:|y-(r_0,y''_0)|\leq\rho\}\subseteq {\Bbb R}^{N}.$$

For any $u\in D^{s}(\mathbb{R}^N)$, $\widetilde{u}$ is defined by:
\begin{eqnarray}\label{poisson involution}
 \widetilde{u}(y,t)=\mathcal P_{s}[u]:=\int_{{\Bbb R}^{N}} P_s(y-\xi,t)u(\xi)d\xi,\quad (y,t)\in {\Bbb R}^{N+1}_+:={\Bbb R}^{N}\times (0,+\infty),
\end{eqnarray}
where
\[
 P_s(x,t)=\beta(N,s)\frac{t^{2s}}{(|x|^2+t^2)^{\frac{N+2s}{2}}}
\] with constant $\beta(N,s)$ such that $\int_{{\Bbb R}^{N}}P_s(x,1)d x=1$.
We refer $\widetilde{u}=\mathcal P_{s}[u]$ to be the \emph{extension} of $u$. Moreover, $\widetilde{u}$ satisfies (see \cite{CS2007})
  \begin{eqnarray*}
 \mathrm{div}(t^{1-2s}\nabla \widetilde{u})=0, \quad \mbox{in }\mathbb{R}^{N+1}_+
\end{eqnarray*}
and
\begin{eqnarray*}
-\lim\limits_{t\to 0}t^{1-2s}\pa_t \widetilde{u}(y,t)=\omega_s(-\Delta)^{s} u(y), \quad \mbox{on} \ {\Bbb R}^{N}
\end{eqnarray*}
in the distribution sense, where $\omega_s=2^{1-2s}\Gamma(1-s)/\Gamma(s)$.

Due to the nonlocalness of the fractional Laplacian operator, we have to overcome some serious difficulties. Indeed, we do not have the local Pohozaev identities for $u$,  but for $\widetilde{u}$. The integrals appearing in \eqref{firstpohozaevidentity11} and \eqref{secondtpohozaevidentity11} is much  more complicated. We have to integrate one more time than the Laplacian operator case. It is very difficult when we derive some sharp estimates for each term in \eqref{firstpohozaevidentity11} and \eqref{secondtpohozaevidentity11}. We need a lot of preliminary lemmas.


Our paper is organized as follows. In section 2, we perform a finite dimensional reduction.
We prove the Theorems \ref{th:1} in section 3.
In Appendix A, we give some essential estimates.
We put the energy expansions for $\langle I'(Z_{\overline{r},\overline{y}'',\lambda}+\phi(\overline{r},\overline{y}'',\lambda)),\frac{\partial Z_{\overline{r},\overline{y}'',\lambda}}{\partial\lambda}\rangle$, $\langle I'(Z_{\overline{r},\overline{y}'',\lambda}+\phi(\overline{r},\overline{y}'',\lambda)),\frac{\partial Z_{\overline{r},\overline{y}'',\lambda}}{\partial\overline{r}}\rangle$ and $\langle I'(Z_{\overline{r},\overline{y}'',\lambda}+\phi(\overline{r},\overline{y}'',\lambda)),\frac{\partial Z_{\overline{r},\overline{y}'',\lambda}}{\partial\overline{y}''}\rangle$ in Appendix B.

\section{Finite dimensional reduction}

In this section, we perform a finite dimensional reduction by using $Z_{\overline{r}, \overline{y}'',\lambda}$ as an
approximation solution and considering the linearization of the
problem \eqref{problem1} around the approximation solution
$Z_{\overline{r}, \overline{y}'',\lambda}$.
Let $$Z_{i,1}=\frac{\partial Z_{x_i,\lambda}}{\partial \lambda}, \ \ Z_{i,2}=\frac{\partial Z_{x_i,\lambda}}{\partial \overline{r}}, \ \ Z_{i,k}=\frac{\partial Z_{x_i,\lambda}}{\partial \overline{y}''_k}, \ k=3,\ldots,N.$$
Then direct computation shows that
$$Z_{i,1}=O(\lambda^{-1}Z_{x_i,\lambda}), \ \ Z_{i,l}=O(\lambda Z_{x_i,\lambda}), \ l=2,\ldots,N.$$


We consider the following linearized problem:
\begin{equation}\label{problem3}
\left\{\begin{array}{ll}
(-\Delta)^s \phi+V(r,y'')\phi-(2^*_s-1)Z_{\overline{r},\overline{y}'',\lambda}^{2^*_s-2}\phi=h+\sum\limits_{l=1}^Nc_l\sum\limits_{i=1}^kZ_{x_i,\lambda}^{2^*_s-2}Z_{i,l},\\
 u\in H_{s}, \ \ \sum\limits_{i=1}^k\int_{\mathbb{R}^N}Z_{x_i,\lambda}^{2^*_s-2}Z_{i,l}\phi=0, \ l=1,2,\ldots,N,
\end{array}\right.
\end{equation}
for some numbers $c_l$.

\begin{lemma}\label{lemma4}
Suppose that $N>4s$ and $\phi_k$ solves problem \eqref{problem3}. If $\|h_k\|_{**}\rightarrow0$ as $k\rightarrow\infty$, then $\|\phi_k\|_{*}\rightarrow0$ as $k\rightarrow\infty$.
\end{lemma}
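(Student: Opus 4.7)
The plan is to argue by contradiction along the standard blow-up scheme for linearized problems at a sum of bubbles, adapted to the fractional setting and the $\|\cdot\|_*$/$\|\cdot\|_{**}$ weighted norms. Suppose the conclusion fails: after rescaling we may assume $\|\phi_k\|_* = 1$ and $\|h_k\|_{**} \to 0$. The goal is to derive a contradiction by showing that $\phi_k$ must in fact tend to $0$ in $\|\cdot\|_*$.

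The first step is to convert \eqref{problem3} into a pointwise estimate. Using the Green representation for $(-\Delta)^s + V$ (or inverting via the Riesz potential and absorbing the $V\phi$ term by the decay of $V$), one gets that for any solution
\begin{equation*}
|\phi_k(y)| \leq C\int_{\mathbb{R}^N} \frac{1}{|y-z|^{N-2s}}\Bigl|\,(2^*_s-1)Z_{\bar r,\bar y'',\lambda}^{2^*_s-2}\phi_k + h_k + \sum_l c_l\sum_i Z_{x_i,\lambda}^{2^*_s-2}Z_{i,l}\Bigr|(z)\,dz.
\end{equation*}
Plugging in the defining bounds $|h_k(z)| \leq \|h_k\|_{**}\lambda^{(N+2s)/2}\sum_j(1+\lambda|z-x_j|)^{-(N+2s)/2-\tau}$ and the corresponding bound for $\phi_k$ with $\|\phi_k\|_*=1$, and using the standard convolution estimate (see the appendix-type lemmas referenced in the paper)
\begin{equation*}
\int_{\mathbb{R}^N}\frac{1}{|y-z|^{N-2s}}\cdot\frac{1}{(1+\lambda|z-x_j|)^{\alpha}}\,dz
\leq \frac{C}{\lambda^{2s}(1+\lambda|y-x_j|)^{\alpha-2s}},\qquad \alpha > 2s,
\end{equation*}
one obtains, for some fixed small $\sigma>0$,
\begin{equation*}
\lambda^{-(N-2s)/2}|\phi_k(y)| \leq \bigl(o(1) + C\lambda^{-\sigma}\|\phi_k\|_*\bigr)\sum_{j=1}^k \frac{1}{(1+\lambda|y-x_j|)^{(N-2s)/2+\tau}} + \text{(terms from }c_l\text{)}.
\end{equation*}

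The second step is to control the Lagrange multipliers $c_l$. Testing \eqref{problem3} against $Z_{j,l}$ and using the orthogonality constraint $\sum_i\int Z_{x_i,\lambda}^{2^*_s-2}Z_{i,l}\phi_k = 0$, the $c_l$ decouple to leading order (the matrix $\langle Z_{i,l}, Z_{i,l'}\rangle_\sharp$ being diagonally dominant by standard bubble-interaction estimates), and one estimates $|c_l|$ in terms of $\|h_k\|_{**}$ and the decay of cross terms; this yields $\sum_l |c_l|\cdot(\text{contribution}) = o(1)$ in the $\|\cdot\|_*$ sense.

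The crucial third step is the local blow-up analysis. By the pointwise estimate and the assumption $\|\phi_k\|_* = 1$, the supremum in the definition of $\|\phi_k\|_*$ is, up to a bounded factor, attained near some $x_{j_k}$; by the $H_s$-symmetry we may take $j_k=1$. Rescale by
\begin{equation*}
\tilde\phi_k(y) := \lambda^{-(N-2s)/2}\phi_k\bigl(x_1+y/\lambda\bigr).
\end{equation*}
Then $|\tilde\phi_k|\leq C$ on compact sets, and $\tilde\phi_k$ satisfies
\begin{equation*}
(-\Delta)^s \tilde\phi_k + \lambda^{-2s}V\tilde\phi_k = (2^*_s-1)U_{0,1}^{2^*_s-2}\tilde\phi_k + o(1)
\end{equation*}
on any fixed ball, after absorbing the other bubbles into $o(1)$ using the decay of $U_{x_j,\lambda}$ for $j\neq 1$ (this uses $N > 4s + 2\tau$). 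By standard regularity for fractional equations, a subsequence converges locally to a bounded limit $\tilde\phi_\infty$ solving the linearized critical equation
\begin{equation*}
(-\Delta)^s \tilde\phi_\infty = (2^*_s-1)U_{0,1}^{2^*_s-2}\tilde\phi_\infty \quad\text{in }\mathbb{R}^N,
\end{equation*}
with the same decay as $U_{0,1}$. By the non-degeneracy result for $U_{0,1}$ (Davila--del~Pino--Sire), the kernel is spanned by $\partial_\lambda U$ and $\partial_{x_i}U$; the orthogonality conditions carried over in the limit then force $\tilde\phi_\infty \equiv 0$. But the rescaled point was chosen so that $|\tilde\phi_k(0)|$ (or some nearby point) is bounded below, giving the contradiction.

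The main obstacle I expect is the pointwise estimate in the first step: because of the nonlocal operator together with the cutoff factor $\zeta$ entering $Z_{\bar r, \bar y'',\lambda}$, one must justify controlling $\phi_k$ by a convolution against a sum of bubble-weights uniformly in $k$, which requires delicate interaction estimates between different bubbles (quantified by the number-of-bubbles exponent $k^{(N-2s)/(N-4s)} \sim \lambda$) and absorbing the singular cross terms into the small factor $\lambda^{-\sigma}$. Once this estimate and the control of the $c_l$ are in place, the blow-up/non-degeneracy closing argument is standard.
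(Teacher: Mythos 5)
Your overall architecture mirrors the paper's proof: argue by contradiction with $\|\phi_k\|_* = 1$, derive a pointwise Green-type bound, control the multipliers $c_l$ by testing against $Z_{1,t}$, locate where the weighted supremum is attained, rescale around that bubble, and invoke non-degeneracy of the limiting linearized equation. However, the pointwise inequality you write at the end of Step~1,
\[
\lambda^{-(N-2s)/2}|\phi_k(y)| \le \bigl(o(1) + C\lambda^{-\sigma}\|\phi_k\|_*\bigr)\sum_{j=1}^k \frac{1}{(1+\lambda|y-x_j|)^{(N-2s)/2+\tau}} + (\text{terms from } c_l),
\]
cannot be what one actually obtains: if it held, dividing by the weight and taking the supremum would give $\|\phi_k\|_* \le o(1) + C\lambda^{-\sigma}\|\phi_k\|_*$, hence $\|\phi_k\|_* \to 0$ at once, making the entire blow-up step and the non-degeneracy of $U$ superfluous. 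Such a global contraction cannot hold because the linearized operator has a nontrivial kernel. What the convolution against $Z_{\bar r,\bar y'',\lambda}^{2^*_s-2}|\phi_k|$ really produces (compare the paper's \eqref{equality6}) is
\[
|A_1| \le C\|\phi_k\|_* \,\lambda^{(N-2s)/2}\sum_{j=1}^k\frac{1}{(1+\lambda|y-x_j|)^{(N-2s)/2+\tau+\theta}},
\]
i.e.\ the \emph{exponent} of the weight improves by a small $\theta>0$, with no extra $\lambda^{-\sigma}$ factor in front. The ratio of this improved weight to the original one is small only where $\lambda\,\mathrm{dist}(y,\{x_j\})$ is large and stays bounded below on $B_{R/\lambda}(x_j)$. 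That is precisely why the argument cannot close at Step~1: one instead concludes, as you do in Step~3, that the supremum defining $\|\phi_k\|_*$ is attained on some $B_{R/\lambda}(x_i)$, and then the rescaling and non-degeneracy argument finish the proof. Once this imprecision is repaired your proposal coincides with the paper's. A smaller remark: you invoke $N>4s+2\tau$ in Step~3, but the lemma (and the paper's proof) needs only $N>4s$, which already makes $\tau>0$ and the weighted interaction estimates go through.
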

{\bf Proof.}  We prove this lemma
by contradiction arguments. Assume that there exist $h_{k}$
with $\|h_{k}\|_{\ast\ast}\rightarrow0$ as $k\rightarrow\infty$, $\|\phi_k\|_{\ast}\geq c>0$ with $\lambda=\lambda_k,$ $\lambda_{k}\in[L_{0}k^{\frac{N-2s}{N-4s}},L_{1}k^{\frac{N-2s}{N-4s}}]$ and
$(\overline{r}_k,\overline{y}''_k)\rightarrow(r_0,y''_0)$.
Without loss of generality, we can assume that
$\|\phi_{k}\|_{\ast}\equiv1.$  For simplicity, we drop the subscript $k$.

Firstly, we have
\begin{equation}
\begin{aligned}
|\phi(y)|\leq& C\int_{\mathbb{R}^{N}}\frac{1}{|y-z|^{N-2s}}Z_{\overline{r},\overline{y}'',\lambda}^{2^*_s-2}|\phi|dz\\
&+C\int_{\mathbb{R}^{N}}\frac{1}{|y-z|^{N-2s}}\Big[|h|+|\sum\limits_{l=1}^{N}c_l\sum\limits_{i=1}^{k}
Z_{x_{i},\lambda}^{2^*_s-2}Z_{i,l}|\Big]dz\\
=:&A_1+A_2.
\end{aligned}
\end{equation}

For the first term $A_1$, by Lemma \ref{Lemma appendix1} and \ref{Lemma appendix2}, we can deduce that
\begin{equation}\label{equality6}
\begin{aligned}
\big|A_1\big|\leq&C\|\phi\|_{\ast}\int_{\mathbb{R}^{N}}\frac{1}{|y-z|^{N-2s}}Z_{\overline{r},\overline{y}'',\lambda}^{2^*_s-2}
\sum\limits_{i=1}^{k}\frac{\lambda^{\frac{N-2s}{2}}}{(1+\lambda|z-x_{i}|)^{\frac{N-2s}{2}+\tau}}dz\\
\leq&C\|\phi\|_{\ast}\lambda^{\frac{N-2s}{2}}\sum\limits_{i=1}^{k}\frac{1}{(1+\lambda|y-x_{i}|)^{\frac{N-2s}{2}+\tau+\theta}},
\end{aligned}
\end{equation}
where $\theta$ is a small constant. For the second term $A_2$, we make use of Lemma \ref{Lemma appendix2}, so that
\begin{equation}\label{equality7}
\begin{aligned}
 \big|A_2\big|&\leq
C\|h\|_{\ast\ast}\int_{\mathbb{R}^{N}}\sum\limits_{i=1}^{k}
\frac{\lambda^{\frac{N+2s}{2}}}{|y-z|^{N-2s}(1+\lambda|z-x_{i}|)^{\frac{N+2s}{2}+\tau}}dz\\
&\quad +C\sum\limits_{l=1}^{N}|c_l|\int_{\mathbb{R}^{N}}\sum\limits_{i=1}^{k}\frac{\lambda^{\frac{N+2s}{2}+n_l}}{|y-z|^{N-2s}(1+\lambda|z-x_{i}|)^{N+2s}}dz\\
&\leq
C\|h\|_{\ast\ast}\lambda^{\frac{N-2s}{2}}\sum\limits_{i=1}^{k}\frac{1}{(1+\lambda|y-x_{i}|)^{\frac{N-2s}{2}+\tau}}
+C\sum\limits_{l=1}^{N}|c_l|\lambda^{\frac{N-2s}{2}+n_l}\sum\limits_{i=1}^{k}\frac{1}{(1+\lambda|y-x_{i}|)^{\frac{N-2s}{2}+\tau}},
\end{aligned}
\end{equation}
where $n_1=-1$, $n_l=1$ for $l=2,\ldots,N$.
Then, we have
\begin{equation}\label{equality13}
\begin{aligned}
&\big(\sum\limits_{i=1}^{k}\frac{1}{(1+\lambda|y-x_{i}|)^{\frac{N-2s}{2}+\tau}}\big)^{-1}\lambda^{-\frac{N-2s}{2}}|\phi|
\\ \leq & C\|\phi\|_{\ast}\frac{\sum\limits_{i=1}^{k}\frac{1}{(1+\lambda|y-x_{i}|)^{\frac{N-2s}{2}+\tau+\theta}}}{\sum\limits_{i=1}^{k}\frac{1}{(1+\lambda|y-x_{i}|)^{\frac{N-2s}{2}+\tau}}}
+C\|h\|_{\ast\ast}+C\sum\limits_{l=1}^{N}|c_l|\lambda^{n_l}.
\end{aligned}
\end{equation}

Multiplying both sides of \eqref{problem3} by $Z_{1,t}$, we have
\begin{equation}\label{equality8}
\begin{aligned}
&\sum\limits_{l=1}^{N}c_l\sum\limits_{i=1}^{k}\int_{\mathbb{R}^N}Z_{x_i,\lambda}^{2^*_s-2}Z_{i,l}Z_{1,t}\\
=&\big\langle(-\Delta )^s\phi-V(r,y'')\phi-(2^*_s-1)Z^{2^*_s-2}_{\overline{r},\overline{y}'',\lambda}\phi,Z_{1,t}\big\rangle-\langle h,Z_{1,t}\rangle.
\end{aligned}
\end{equation}
First of all, there exists a constant $\overline{c}>0$ such that
\begin{equation}\label{equality11}
\begin{aligned}
\sum\limits_{i=1}^{k}\int_{\mathbb{R}^N}Z_{x_i,\lambda}^{2^*_s-2}Z_{i,l}Z_{1,t} \left\{
\begin{array}{ll}
=(\overline{c}+o(1))\lambda^{2n_t}, & l=t, \\
\leq \frac{\overline{c}\lambda^{n_t}\lambda^{n_l}}{\lambda^{N}}, & l\neq t.
\end{array}
   \right.
\end{aligned}
\end{equation}
On the other hand, we have
\begin{equation}\label{aum16}
\begin{aligned}
&|\langle V(r,y'')\phi,Z_{1,t}\rangle|\\
\leq& C\|\phi\|_*\int_{\mathbb{R}^N}\frac{\zeta\lambda^{N-2s+n_t}}{(1+\lambda|y-x_1|)^{N-2s}}
\sum\limits_{i=1}^{k}\frac{1}{(1+\lambda|y-x_i|)^{\frac{N-2s}{2}+\tau}}\\
\leq& C\|\phi\|_*\lambda^{N-2s+n_t}\big[\int_{\mathbb{R}^N}\frac{\zeta}{(1+\lambda|y-x_1|)^{\frac{3N-6s}{2}+\tau}}\\
&\quad \quad \quad \quad +\sum\limits_{i=2}^{k}\frac{1}{(\lambda|x_1-x_i|)^{\tau}}\int_{\mathbb{R}^N}\zeta(\frac{1}{(1+\lambda|y-x_1|)^{\frac{3N-6s}{2}}}+\frac{1}{(1+\lambda|y-x_i|)^{\frac{3N-6s}{2}}})\big]\\
\leq&C\|\phi\|_*\int_{\mathbb{R}^N}\zeta\frac{\lambda^{N-2s+n_t}}{(1+\lambda|y-x_1|)^{\frac{3N-6s}{2}}}\leq\frac{C\lambda^{n_t}\|\phi\|_*log\lambda}{\lambda^{\min(2s,\frac{N-2s}{2})}}
\leq\frac{C\lambda^{n_t}\|\phi\|_*}{\lambda^{s+\sigma}}
\end{aligned}
\end{equation}
and
\begin{equation}\label{aum17}
\begin{aligned}
|\langle h,Z_{1,t}\rangle|&\leq C\|h\|_{\ast\ast}\int_{\mathbb{R}^{N}}\frac{\lambda^{N+n_t}}{(1+\lambda|y-x_1|)^{N-2s}}\sum\limits_{i=1}^{k}\frac{1}{(1+\lambda|y-x_i|)^{\frac{N+2s}{2}+\tau}}\\
&\leq C\lambda^{n_t}\|h\|_{\ast\ast}.
\end{aligned}
\end{equation}

Moreover, one has
\begin{equation}\label{equality12}
\begin{aligned}
|\langle(-\Delta)^s\phi-(2^*_s-1)Z^{2^*_s-2}_{\overline{r},\overline{y}'',\lambda}\phi,Z_{1,t}\rangle|\leq\frac{C\lambda^{n_t}\|\phi\|_{\ast}}{\lambda^{s+\sigma}}.
\end{aligned}
\end{equation}

Combining \eqref{equality8}, \eqref{equality11}, \eqref{aum16}, \eqref{aum17} and \eqref{equality12}, we have
$$|c_t|\leq \frac{C}{\lambda^{n_t}}(\frac{\|\phi\|_{\ast}}{\lambda^{\sigma}}+\|h\|_{\ast\ast})+\frac{C}{\lambda^{n_t}}\sum\limits_{l\neq t}\frac{\lambda^{n_l}|c_l|}{\lambda^{N}}.$$
This implies that
$$\sum\limits_{l=1}^N|c_l|\lambda^{n_l}\leq C(\frac{\|\phi\|_{\ast}}{\lambda^{\sigma}}+\|h\|_{\ast\ast}).$$

Thus by \eqref{equality13} and $\|\phi\|_{\ast}=1$, there is $R>0$ such that
\begin{equation}\label{equality20}
\begin{aligned}
\|\lambda^{-\frac{N-2s}{2}}\phi(y)\|_{L^\infty(B_{R/\lambda}(x_i))}\geq a>0,
\end{aligned}
\end{equation}
for some $i$. As a result, we have that $\widetilde{\phi}=\lambda^{-\frac{N-2s}{2}}\phi(\frac{y}{\lambda}+x_i)$ converges uniformly, in any compact set, to a solution $u$ of the following equation:
$$(-\Delta)^su-(2^*_s-1)U_{0,\Lambda}^{2^*_s-2}u=0, \ \ \hbox{ in } \ \mathbb{R}^N,$$
for some $0<\Lambda_1\leq\Lambda\leq\Lambda_2$. Since $u$ is perpendicular to the kernel of this equation, $u=0$. This is a contradiction to \eqref{equality20}.
\hfill{$\Box$}

\vskip8pt

Using the same argument as in the proof of Proposition 4.1 in \cite{pfm}, we can obtain the following proposition.
\begin{proposition}\label{proposition1}
There exist $k_0>0$ and a constant $C>0$, independent of $k$, such that for all
$k\geq k_0$ and all $h\in L^\infty(\mathbb{R}^N)$, problem \eqref{problem3} has a unique solution $\phi=L_k(h)$. Besides,
\begin{equation}\label{equality1}
\begin{aligned}
\|L_k(h)\|_*\leq C\|h\|_{**}, \ \ \ |c_l|\leq \frac{C}{\lambda^{n_l}}\|h\|_{**}.
\end{aligned}
\end{equation}
\end{proposition}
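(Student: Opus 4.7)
The plan is to combine the a priori estimate from Lemma \ref{lemma4} with the Fredholm alternative in an appropriate Hilbert space. First, I would introduce the closed subspace
\begin{equation*}
\mathbb{E}_s := \Bigl\{\phi \in H_s : \sum_{i=1}^k \int_{\mathbb{R}^N} Z_{x_i,\lambda}^{2^*_s-2}\, Z_{i,l}\, \phi = 0,\ l = 1,\ldots, N\Bigr\}
\end{equation*}
equipped with the inner product inherited from $H^s(\mathbb{R}^N)$. For $h$ with $\|h\|_{**} < \infty$, one rewrites \eqref{problem3} as an operator equation on $\mathbb{E}_s$ by applying the Riesz representation theorem to both the potential term $(2^*_s-1)Z_{\overline{r},\overline{y}'',\lambda}^{2^*_s-2}\phi$ and the right-hand side: this yields a bounded linear operator $T : \mathbb{E}_s \to \mathbb{E}_s$ and an element $\tilde h \in \mathbb{E}_s$ so that \eqref{problem3} becomes $\phi - T\phi = \tilde h$, with the Lagrange multipliers $c_l$ uniquely determined by the projection of the full equation onto the complement of $\mathbb{E}_s$. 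Compactness of $T$ follows from the fast decay of $Z_{\overline{r},\overline{y}'',\lambda}^{2^*_s-2}$ together with the compact embedding $H^s \hookrightarrow L^q_{loc}$ for $q < 2^*_s$, exactly as in Proposition 4.1 of \cite{pfm}.

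Given compactness, existence reduces via the Fredholm alternative to injectivity of $I - T$, which I would prove by contradiction. If for arbitrarily large $k$ there existed a nontrivial $\phi_k \in \mathbb{E}_s$ satisfying the homogeneous version of \eqref{problem3} (that is, $h_k \equiv 0$), normalizing by $\|\phi_k\|_* = 1$ and applying Lemma \ref{lemma4} with $\|h_k\|_{**} = 0$ would force $\|\phi_k\|_* \to 0$, contradicting the normalization. Hence for all $k \geq k_0$ the operator $I - T$ is injective, and the Fredholm alternative yields a unique $\phi = L_k(h) \in \mathbb{E}_s$ solving \eqref{problem3} for any admissible $h$.

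The quantitative bound $\|L_k(h)\|_* \leq C \|h\|_{**}$ is obtained by the same contradiction argument, applied this time to sequences $(h_k, \phi_k = L_k(h_k))$ with $\|\phi_k\|_* = 1$ and $\|h_k\|_{**} \to 0$: Lemma \ref{lemma4} again forces $\|\phi_k\|_* \to 0$. Once this bound is in hand, the estimate on the multipliers is extracted by testing \eqref{problem3} against $Z_{1,t}$ and repeating the computation in \eqref{equality8}--\eqref{equality12}, where the diagonal term of the left-hand side is $(\overline c + o(1))\lambda^{2n_t}c_t$ by \eqref{equality11}, the off-diagonal multipliers are absorbed, and the error terms are controlled by $C\lambda^{n_t}(\lambda^{-\sigma}\|\phi\|_* + \|h\|_{**})$; using $\|\phi\|_* \leq C\|h\|_{**}$ then gives $|c_l| \leq C \lambda^{-n_l}\|h\|_{**}$. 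Since all the serious analytic work is already contained in Lemma \ref{lemma4}, I do not expect a genuine new obstacle here; the only thing to be careful about is recording that $T$ is well-defined and compact on $\mathbb{E}_s$ in the fractional setting, which amounts to routine use of the Sobolev embedding together with the decay properties of $Z_{\overline{r},\overline{y}'',\lambda}$.
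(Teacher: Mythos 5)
Your proposal is correct and follows essentially the same route as the paper, which simply defers to the argument of Proposition~4.1 in \cite{pfm}: reduce to an operator equation $(I-T)\phi = \tilde h$ on the constrained subspace, observe $T$ is compact because the potential $(2_s^*-1)Z_{\overline r,\overline y'',\lambda}^{2_s^*-2}$ is compactly supported (thanks to the cut-off $\zeta$), obtain injectivity and the a~priori bound from Lemma~\ref{lemma4} via contradiction, and then read off the bound on $c_l$ from the computations \eqref{equality8}--\eqref{equality12}. The one point worth stating explicitly, which you leave implicit, is that in the two contradiction arguments the sequences must stay inside the admissible parameter range $\lambda_k\in[L_0k^{\frac{N-2s}{N-4s}},L_1k^{\frac{N-2s}{N-4s}}]$ and $(\overline r_k,\overline y''_k)\to(r_0,y_0'')$, as that is the regime in which Lemma~\ref{lemma4} applies; otherwise the argument is complete.
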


Now we consider the following problem:
\begin{equation}\label{problem4}
\left\{\begin{array}{ll}
(-\Delta)^s(Z_{\overline{r},\overline{y}'',\lambda}+\phi)+V(r,y'')(Z_{\overline{r},\overline{y}'',\lambda}+\phi)=(Z_{\overline{r},\overline{y}'',\lambda}+\phi)^{2_s^*-1}+\sum\limits_{l=1}^Nc_l\sum\limits_{i=1}^kZ^{2_s^*-2}_{x_i,\lambda}Z_{i,l}, \ \ \hbox{ in } \mathbb{R}^N, \\
\phi\in H_{s},
\sum\limits_{i=1}^k\int_{\mathbb{R}^N}Z^{2_s^*-2}_{x_i,\lambda}Z_{i,l}\phi=0, \ l=1,\ldots, N.
\end{array}\right.
\end{equation}
In the rest of this section, we devote
ourselves to the proof of  the following proposition by using the contraction mapping theorem.
\begin{proposition}\label{proposition2}
There exist $k_0>0$ and a constant $C>0$, independent of $k$, such that for all
$k\geq k_0$, $L_0k^{\frac{N-2s}{N-4s}}\leq \lambda\leq L_1k^{\frac{N-2s}{N-4s}}$, $|(\overline{r},\overline{y}'')-(r_0, y''_0)|\leq \theta$, problem \eqref{problem4} has a unique solution $\phi=\phi(\overline{r},\overline{y}'',\lambda)$ satisfying,
\begin{equation}\label{equality2}
\begin{aligned}
\|\phi\|_*\leq C(\frac{1}{\lambda})^{s+\sigma}, \ \ \ |c_l|\leq C(\frac{1}{\lambda})^{s+\sigma},
\end{aligned}
\end{equation}
where $\sigma>0$ is a small constant.
\end{proposition}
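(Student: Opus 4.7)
The plan is to convert problem \eqref{problem4} into a fixed-point problem and apply the Banach contraction principle, using the linear theory of Proposition \ref{proposition1} to invert the linearized operator. Set
$\ell_k := Z_{\bar r,\bar y'',\la}^{2^*_s-1} - (-\De)^s Z_{\bar r,\bar y'',\la} - V(r,y'')Z_{\bar r,\bar y'',\la}$
(the error produced by the ansatz) and
$N(\phi) := (Z_{\bar r,\bar y'',\la}+\phi)_+^{2^*_s-1} - Z_{\bar r,\bar y'',\la}^{2^*_s-1} - (2^*_s-1)Z_{\bar r,\bar y'',\la}^{2^*_s-2}\phi$
(the superlinear remainder). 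Then \eqref{problem4} is equivalent to $\phi = A(\phi):= L_k(\ell_k + N(\phi))$, and I would work on the closed set
\[
E := \Bigl\{\phi\in H_s:\ \|\phi\|_*\le C_0\la^{-(s+\sigma)},\ \sum_{i=1}^k\int_{\rz^N}Z_{x_i,\la}^{2^*_s-2}Z_{i,l}\phi\,dy=0\text{ for }l=1,\ldots,N\Bigr\}
\]
for a large constant $C_0$ to be chosen.

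The proof then reduces to three ingredients: (a) the error bound $\|\ell_k\|_{**}\le C\la^{-(s+\sigma)}$; (b) the superlinear bound $\|N(\phi)\|_{**}\le C\|\phi\|_*^{1+\alpha}$ for some $\alpha>0$; and (c) the Lipschitz analogue $\|N(\phi_1)-N(\phi_2)\|_{**}\le C(\|\phi_1\|_*+\|\phi_2\|_*)^{\alpha}\|\phi_1-\phi_2\|_*$. For (a) I would expand $(-\De)^s Z_{\bar r,\bar y'',\la} = \sum_{j=1}^k(-\De)^s(\zeta U_{x_j,\la})$ and write, using the formula displayed just before Lemma \ref{lemma6},
$(-\De)^s(\zeta U_{x_j,\la}) = \zeta U_{x_j,\la}^{2^*_s-1} + R_j$,
whence
$\ell_k = \sum_j\zeta U_{x_j,\la}^{2^*_s-1} - \bigl(\sum_j\zeta U_{x_j,\la}\bigr)^{2^*_s-1} - \sum_j R_j - V\sum_j\zeta U_{x_j,\la}$.
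The cross-bubble sum is controlled by the minimum separation $\la|x_i-x_j|\gtrsim \la/k$, the potential term by $\|V\|_\infty$ together with the support condition on $\zeta$, and the nonlocal correction $R_j$ by Lemma \ref{lemma6}. For (b) and (c) I would use the pointwise bound $|N(\phi)|\le C(Z_{\bar r,\bar y'',\la}^{2^*_s-3}\phi^2 + |\phi|^{2^*_s-1})$ (with the first term absorbed when $2^*_s\le 3$) and its differential analogue, then apply Lemmas \ref{Lemma appendix1} and \ref{Lemma appendix2} to convert products of weighted bumps back into the $\|\cdot\|_{**}$ weight.

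Once (a)--(c) are established, Proposition \ref{proposition1} gives $\|A(\phi)\|_*\le C\bigl(\|\ell_k\|_{**}+\|N(\phi)\|_{**}\bigr)\le \tfrac{C_0}{2}\la^{-(s+\sigma)} + C\bigl(C_0\la^{-(s+\sigma)}\bigr)^{1+\alpha}$, so for $k$ large enough $A(E)\subset E$; similarly (c) yields a contraction constant tending to zero as $k\to\infty$. Banach's theorem then produces the unique fixed point $\phi=\phi(\bar r,\bar y'',\la)\in E$, and the estimate on the multipliers follows from the second assertion of Proposition \ref{proposition1} applied to $h=\ell_k+N(\phi)$. The main obstacle is the error estimate (a), specifically the control of the nonlocal correction $R_j$: the cutoff $\zeta$ destroys the scale invariance of $U_{x_j,\la}$ and produces a tail supported near $\{\delta\le|(r,y'')-(r_0,y''_0)|\le 2\delta\}$ arising from the Cauchy principal value. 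Showing that this tail, measured against the heavy weight $\la^{(N+2s)/2}(1+\la|y-x_j|)^{-(N+2s)/2-\tau}$, gains an extra factor $\la^{-\sigma}$ is precisely the content of the delicate principal-value computation in Lemma \ref{lemma6}, and is what forces the restriction $N>4s+2\tau$.
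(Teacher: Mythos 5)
Your proposal follows essentially the same route as the paper's own proof: you rewrite \eqref{problem4} as a fixed-point problem $\phi=L_k(\ell_k+N(\phi))$, invert via Proposition~\ref{proposition1}, estimate the error term $\ell_k$ (the paper's $l_k$, split the same way into the nonlinear cross-bubble interaction, the $VZ$ term, and the nonlocal correction from cutting off $U_{x_j,\la}$), establish the quadratic/H\"older bound on $N(\phi)$ (the paper's $\mathcal F(\phi)$, Lemma~\ref{lemma5}), and close by Banach contraction on a suitable ball respecting the orthogonality constraints. The only cosmetic difference is that you take the ball radius $C_0\la^{-(s+\sigma)}$ where the paper uses $\la^{-s}$; both choices work given Lemma~\ref{lemma6}. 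You correctly identify the principal-value computation in Lemma~\ref{lemma6} as the technical crux, which is exactly where the paper invests its effort and where the hypothesis $N>4s+2\tau$ is used.
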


We rewrite \eqref{problem4} as
\begin{equation}\label{problem5}
\left\{\begin{array}{ll}
(-\Delta)^s\phi+V(r,y'')\phi-(2_s^*-1)(Z_{\overline{r},\overline{y}'',\lambda})^{2_s^*-2}\phi=\mathcal {F}(\phi)+l_k+\sum\limits_{l=1}^Nc_l\sum\limits_{i=1}^kZ^{2_s^*-2}_{x_i,\lambda}Z_{i,l}, \ \ \hbox{ in } \mathbb{R}^N, \\
\phi\in H_{s},
\sum\limits_{i=1}^k\int_{\mathbb{R}^N}Z^{2_s^*-2}_{x_i,\lambda}Z_{i,l}\phi=0, \ l=1,\ldots, N.
\end{array}\right.
\end{equation}
where
$$\mathcal{F}(\phi)=(Z_{\overline{r},\overline{y}'',\lambda}+\phi)^{2_s^*-1}_+-Z_{\overline{r},\overline{y}'',\lambda}^{2_s^*-1}-(2_s^*-1)Z_{\overline{r},\overline{y}'',\lambda}^{2_s^*-2}\phi,$$
and
\begin{equation}
\begin{aligned}
l_k(y)&=\big(Z_{\overline{r},\overline{y}'',\lambda}^{2_s^*-1}(y)-\zeta(y) \sum\limits_{j=1}^kU_{x_j,\lambda}^{2_s^*-1}(y)\big)-V(r,y'')Z_{\overline{r},\overline{y}'',\lambda}(y)\\
&\quad-\sum\limits_{j=1}^kc(N,s)\lim\limits_{\epsilon\rightarrow 0^+}\int_{\mathbb{R}^N\setminus B_\epsilon(x)}\frac{\big(\zeta(y)-\zeta(x)\big)U_{x_j,\lambda}(x)}{|x-y|^{N+2s}}dx\\
&=:J_1+J_2+J_3.
\end{aligned}
\end{equation}
In order to use the contraction mapping theorem to prove Proposition \ref{proposition2}, we need to estimate $\mathcal{F}(\phi)$ and $l_k$. In the following, we assume that $\|\phi\|_{\ast}$ is small.

\begin{lemma}\label{lemma5}
We have $\|\mathcal{F}(\phi)\|_{**}\leq C\|\phi\|_*^{\min(2,2_s^*-1)}$.
\end{lemma}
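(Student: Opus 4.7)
The strategy is to first derive a pointwise bound for $\mathcal{F}(\phi)$ via a Taylor expansion of the map $t \mapsto (Z_{\bar{r},\bar{y}'',\lambda}+t)_+^{2_s^*-1}$, and then repackage this bound in terms of the weighted sums that define the $\|\cdot\|_*$ and $\|\cdot\|_{**}$ norms, using the technical estimates of Appendix A. Write $Z := Z_{\bar{r},\bar{y}'',\lambda}$. Since $2_s^*-1 = (N+2s)/(N-2s)$, the decisive threshold is $2_s^*-1 \gtrless 2$, equivalently $N \lessgtr 6s$. I would treat the two regimes separately:
\begin{itemize}
\item When $N\le 6s$, so $2_s^*-1\ge 2$, a second-order Taylor expansion gives $|\mathcal{F}(\phi)|\le C\bigl(Z^{2_s^*-3}\phi^2+|\phi|^{2_s^*-1}\bigr)$.
\item When $N>6s$, so $1<2_s^*-1<2$, only $C^{1,2_s^*-2}$-regularity is available, yielding $|\mathcal{F}(\phi)|\le C|\phi|^{2_s^*-1}$.
\end{itemize}

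\textbf{Passing to the $**$-norm.} The definition of $\|\phi\|_*$ gives the pointwise control
\[
|\phi(y)|\le \|\phi\|_*\,\lambda^{\frac{N-2s}{2}}\sum_{j=1}^k\frac{1}{(1+\lambda|y-x_j|)^{\frac{N-2s}{2}+\tau}},
\]
and $Z(y)\le C\lambda^{(N-2s)/2}\sum_j(1+\lambda|y-x_j|)^{-(N-2s)}$. Plugging these into the pointwise bound of the previous step produces sums of the form $\bigl(\sum_j(1+\lambda|y-x_j|)^{-\alpha}\bigr)^p$, which I would collapse into a single sum of the form $\sum_j(1+\lambda|y-x_j|)^{-\beta}$ with $\beta\ge \frac{N+2s}{2}+\tau$, using the standard interpolation inequalities collected in Lemma \ref{Lemma appendix1}--\ref{Lemma appendix2}. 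For the $|\phi|^{2_s^*-1}$ contribution the exponent bookkeeping is $\bigl(\frac{N-2s}{2}+\tau\bigr)(2_s^*-1)=\frac{N+2s}{2}+\tau(2_s^*-1)$, which exceeds $\frac{N+2s}{2}+\tau$ since $2_s^*-1>1$, so the reduction to $\sum_j(1+\lambda|y-x_j|)^{-((N+2s)/2+\tau)}$ is clean. For the $Z^{2_s^*-3}\phi^2$ contribution (only when $N\le 6s$), the same mechanism applied to a triple product of sums reduces everything to the target decay factor. In either regime the prefactor is exactly $\|\phi\|_*^{\min(2,2_s^*-1)}\lambda^{(N+2s)/2}$, so dividing by the weight in the $\|\cdot\|_{**}$ norm yields the claimed bound.

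\textbf{Main obstacle.} The delicate point is the sum-collapsing step: when several weighted sums are multiplied or raised to a non-integer power, one must check that the exponent $\beta$ produced at the end is still large enough (namely $\beta\ge\frac{N+2s}{2}+\tau$) for the Appendix A inequality to apply, and simultaneously that the resulting constant does not blow up with $k$. This is precisely the place where the hypothesis $N>4s+2\tau$ (equivalently $\tau<\frac{N-4s}{2}$, consistent with the definition $\tau=\frac{N-4s}{N-2s}$) enters: it guarantees the strict inequalities $\alpha p>\beta$ needed to use Lemma \ref{Lemma appendix2} without losing decay across the $k$ bubble centers $x_j$. A minor bookkeeping remark is that, in the regime $N\le 6s$, the second contribution $\|\phi\|_*^{2_s^*-1}$ is absorbed into $\|\phi\|_*^2$ (up to a constant) because $\|\phi\|_*$ is assumed small, giving the uniform bound $C\|\phi\|_*^{\min(2,2_s^*-1)}$ in both cases.
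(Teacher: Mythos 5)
Your overall plan coincides with the paper's: the same pointwise Taylor bounds, split at $2^*_s=3$, followed by the same repackaging of the product of weighted sums into the single weight $\sum_j(1+\lambda|y-x_j|)^{-\frac{N+2s}{2}-\tau}$, with $\|\phi\|_*^{2^*_s-1}$ absorbed into $\|\phi\|_*^2$ when $2^*_s>3$ because $\|\phi\|_*$ is small. The one substantive slip is in your ``main obstacle'' paragraph: the uniform-in-$k$ constant in the sum-collapsing step does not come from the hypothesis $N>4s+2\tau$, nor from Lemma~\ref{Lemma appendix2}. (The hypothesis $N>4s+2\tau$ is used in Lemma~\ref{lemma6} to control the cut-off error $J_2$, and Lemma~\ref{Lemma appendix2} is a Green's-function convolution estimate that is not invoked in the proof of Lemma~\ref{lemma5} at all.) What the paper actually uses here is the elementary H\"older split
\[
\Bigl(\sum_{j=1}^k\frac{1}{(1+\lambda|y-x_j|)^{\frac{N-2s}{2}+\tau}}\Bigr)^{2^*_s-1}\le
\sum_{j=1}^k\frac{1}{(1+\lambda|y-x_j|)^{\frac{N+2s}{2}+\tau}}\cdot\Bigl(\sum_{j=1}^k\frac{1}{(1+\lambda|y-x_j|)^{\tau}}\Bigr)^{\frac{4s}{N-2s}},
\]
together with the fact that the auxiliary factor $\sum_{j}(1+\lambda|y-x_j|)^{-\tau}$ is $O(1)$ uniformly in $k$. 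That boundedness is exactly what the choices $\tau=\frac{N-4s}{N-2s}$ and $\lambda\sim k^{\frac{N-2s}{N-4s}}$ are calibrated to produce: since $\lambda|x_1-x_j|\gtrsim \lambda j/k$, one has $\sum_{j}(\lambda j/k)^{-\tau}\sim(k/\lambda)^{\tau}k^{1-\tau}=k/\lambda^{\tau}\sim 1$, and this has nothing to do with the sign of $N-4s-2\tau$. With that correction, your argument matches the paper's proof.
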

{\bf Proof.}
Firstly, we consider $2_s^*\leq3$.
Using the H\"{o}lder inequality, we obtain:
\begin{equation*}
\begin{aligned}
|\mathcal{F}(\phi)|&\leq C\|\phi\|_*^{2_s^*-1}(\sum\limits_{j=1}^k\frac{\lambda^{\frac{N-2s}{2}}}{(1+\lambda|y-x_j|)^{\frac{N-2s}{2}+\tau}})^{2_s^*-1}\\
&\leq C\|\phi\|_*^{2_s^*-1}\lambda^{\frac{N+2s}{2}}\sum\limits_{j=1}^k\frac{1}{(1+\lambda|y-x_j|)^{\frac{N+2s}{2}+\tau}}(\sum\limits_{j=1}^k\frac{1}{(1+\lambda|y-x_j|)^{\tau}})^{\frac{4s}{N-2s}}\\
&\leq C\|\phi\|_*^{2_s^*-1}\lambda^{\frac{N+2s}{2}}\sum\limits_{j=1}^k\frac{1}{(1+\lambda|y-x_j|)^{\frac{N+2s}{2}+\tau}}.
\end{aligned}
\end{equation*}
When $2_s^*>3$, we have
\begin{equation*}
\begin{aligned}
|\mathcal{F}(\phi)|&\leq C\|\phi\|_*^{2}(\sum\limits_{j=1}^k\frac{\lambda^{\frac{N-2s}{2}}}{(1+\lambda|y-x_j|)^{\frac{N-2s}{2}+\tau}})^{2}(\sum\limits_{j=1}^k\frac{\lambda^{\frac{N-2s}{2}}}{(1+\lambda|y-x_j|)^{N-2s}})^{2_s^*-3}\\
&\quad+C\|\phi\|_*^{2_s^*-1}(\sum\limits_{j=1}^k\frac{\lambda^{\frac{N-2s}{2}}}{(1+\lambda|y-x_j|)^{\frac{N-2s}{2}+\tau}})^{2_s^*-1}\\
&\leq C(\|\phi\|_*^{2}+\|\phi\|_*^{2_s^*-1})\lambda^{\frac{N+2s}{2}}(\sum\limits_{j=1}^k\frac{1}{(1+\lambda|y-x_j|)^{\frac{N-2s}{2}+\tau}})^{2_s^*-1}\\
&\leq C\|\phi\|_*^{2}\lambda^{\frac{N+2s}{2}}\sum\limits_{j=1}^k\frac{1}{(1+\lambda|y-x_j|)^{\frac{N+2s}{2}+\tau}}.
\end{aligned}
\end{equation*}
Hence, we obtain
$\|\mathcal{F}(\phi)\|_{**}\leq C\|\phi\|_*^{\min(2,2_s^*-1)}$. \par
\hfill{$\Box$}

\vskip8pt

Next, we estimate $l_k$.
\begin{lemma}\label{lemma6}
If $N>4s+2\tau$, then there exists a small $\sigma>0$ such that  $\|l_k\|_{**}\leq \frac{C}{\lambda^{s+\sigma}}$.
\end{lemma}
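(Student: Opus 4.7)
The plan is to control $l_k = J_1 + J_2 + J_3$ term by term in the $\|\cdot\|_{**}$ norm, extracting a factor $\lambda^{-s-\sigma}$ from each piece.

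For $J_1$, I would expand algebraically
\begin{equation*}
J_1 = \Bigl(\sum_{j=1}^k \zeta U_{x_j,\lambda}\Bigr)^{2^*_s-1} - \zeta\sum_{j=1}^k U_{x_j,\lambda}^{2^*_s-1}
\end{equation*}
and split the result into a \emph{cut-off correction} $\sum_j(\zeta^{2^*_s-1}-\zeta)U_{x_j,\lambda}^{2^*_s-1}$, which is supported on $\{\zeta<1\}$ (where $|y-x_j|\ge\delta$ when $y$ is near $x_j$) and thus decays polynomially in $\lambda$, plus the \emph{cross-interactions} of the form $\zeta U_{x_i,\lambda}^{2^*_s-2}U_{x_j,\lambda}$ with $i\ne j$. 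Working on the Voronoi cell $\Omega_i$ and using $|x_i-x_j|\ge c\bar r\,|\sin\tfrac{(i-j)\pi}{k}|$ together with $\lambda\asymp k^{(N-2s)/(N-4s)}$, which forces $\lambda|x_i-x_j|$ to be large, the standard convolution estimates (of the type of Lemmas \ref{Lemma appendix1}--\ref{Lemma appendix2}) yield the required bound.

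For $J_2=-V(r,y'')Z_{\bar r,\bar y'',\lambda}$, the bound $|V|\le C$ and the pointwise estimate $Z_{\bar r,\bar y'',\lambda}(y)\le C\sum_j\zeta\,\lambda^{(N-2s)/2}(1+\lambda|y-x_j|)^{-(N-2s)}$ reduce matters to comparing this with the $**$-weight. The ratio has the form
\begin{equation*}
\frac{|J_2(y)|}{\text{weight}}\le \frac{C\,\zeta(y)}{\lambda^{2s}}\sum_{j=1}^k(1+\lambda|y-x_j|)^{(6s+2\tau-N)/2}.
\end{equation*}
On $\{\zeta\ne 0\}$, $|y-x_j|$ is bounded, so $(1+\lambda|y-x_j|)^{(6s+2\tau-N)/2}\le C\lambda^{(6s+2\tau-N)/2}$ when the exponent is positive, giving a net factor $\lambda^{(2\tau+2s-N)/2}$, which is $\le C\lambda^{-s-\sigma}$ precisely because $N>4s+2\tau$.

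The main obstacle is $J_3$, the commutator-type contribution that is genuinely new in the fractional setting. For each $j$, set
\begin{equation*}
W_j(y) = \mathrm{P.V.}\int_{\mathbb R^N}\frac{(\zeta(y)-\zeta(x))U_{x_j,\lambda}(x)}{|x-y|^{N+2s}}\,dx,
\end{equation*}
and split the integral at $|x-y|=R$ for a fixed small $R$. On the near region I would Taylor-expand $\zeta(x)=\zeta(y)+\nabla\zeta(y)\cdot(x-y)+O(|x-y|^2)$; the linear term vanishes in the principal value by antisymmetry of the kernel $(x-y)/|x-y|^{N+2s}$ around $x=y$, and the quadratic remainder contributes a convergent integral $\int|x-y|^{2-N-2s}U_{x_j,\lambda}(x)\,dx$ (integrable at $x=y$ because $2s<2$). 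Rescaling via $\eta=\lambda(x-x_j)$ and applying a Riesz-convolution estimate from the appendix produces a bound of order $\lambda^{(N+2s)/2-2}(1+\lambda|y-x_j|)^{-(N-2)}$. On the far region, $|\zeta(y)-\zeta(x)|\le 2$ and the polynomial decay of $U_{x_j,\lambda}$ give an analogous or better bound. Comparing with the $**$-weight, the excess is again of order $\lambda^{(2\tau+2s-N)/2}$, which is $\le C\lambda^{-s-\sigma}$ for some $\sigma>0$ thanks to $N>4s+2\tau$.

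The delicate point throughout $J_3$ is the antisymmetry cancellation of the linear Taylor term in the principal value and the matching of the resulting convolution decay to the $**$-weight; both $s<1$ (for near-field integrability) and $N>4s+2\tau$ (for the extra $\lambda^{-\sigma}$ margin) are used crucially here.
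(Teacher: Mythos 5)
Your decomposition of $l_k=J_1+J_2+J_3$ and the treatment of $J_1$, $J_2$ and the near/far splitting of $J_3$ follow the same scheme as the paper, and the final exponent arithmetic (comparing with the $**$-weight and using $N>4s+2\tau$) is correct. The one genuine gap is in the near-field estimate of $J_3$, and it is conceptual rather than arithmetic.

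You assert that, after Taylor-expanding $\zeta(x)=\zeta(y)+\nabla\zeta(y)\cdot(x-y)+O(|x-y|^2)$, ``the linear term vanishes in the principal value by antisymmetry of the kernel $(x-y)/|x-y|^{N+2s}$ around $x=y$.'' This is not true: the integrand in the linear piece is $\dfrac{\nabla\zeta(y)\cdot(x-y)}{|x-y|^{N+2s}}\,U_{x_j,\lambda}(x)$, and since $U_{x_j,\lambda}$ is not symmetric about $y$, the principal value does not vanish. Antisymmetry of the kernel only makes the P.V.\ \emph{finite}; what remains is the contribution of the odd part of $U_{x_j,\lambda}$ around $y$. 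The correct move (and the one the paper makes) is to symmetrize: replace $U_{x_j,\lambda}(y+z)$ by $\tfrac12\bigl[U_{x_j,\lambda}(y+z)-U_{x_j,\lambda}(y-z)\bigr]$ inside the $z$-integral, which is legitimate because the kernel is odd, and then apply the mean value theorem to gain one more factor of $|z|$ through $\nabla U_{x_j,\lambda}$. That produces a bound of the form $\displaystyle\int_{B_R}\frac{|\nabla\zeta(y)|}{|z|^{N+2s-2}}\,\bigl|\nabla U_{x_j,\lambda}\bigl(y+(2\vartheta-1)z\bigr)\bigr|\,dz$, which is of the \emph{same} order as your quadratic-remainder integral $\int|x-y|^{2-N-2s}U_{x_j,\lambda}(x)\,dx$ on the support of $\nabla\zeta$ (there $|\nabla U_{x_j,\lambda}|\lesssim U_{x_j,\lambda}$ up to a fixed factor), so your final power of $\lambda$ happens to be right. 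But as written, the argument simply drops the linear-term contribution rather than estimating it, and the stated cancellation mechanism is false. To be complete you should either carry out the symmetrization of $U_{x_j,\lambda}$ explicitly, or equivalently subtract $U_{x_j,\lambda}(y)$ from $U_{x_j,\lambda}(x)$ in the linear piece before invoking antisymmetry, and then bound the remaining difference by $|x-y|\,\|\nabla U_{x_j,\lambda}\|_{L^\infty(B_R(y))}$. The far-field piece you only sketch; the paper splits it into three regimes depending on $|y-x_j|$, and that case analysis is needed to get the uniform $\lambda^{-s-\sigma}$ margin against the $j$-th term of the weight, so you should supply it rather than rely on ``analogous or better.''
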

{\bf Proof.}
By the symmetry, we can assume that $y\in\Omega_1$. Then $|y-x_j|\geq|y-x_1|$.  We first estimate the term $J_1$. We have
\begin{equation*}
\begin{aligned}
|J_1|&\leq C[(\sum\limits_{j=2}^kU_{x_j,\lambda})^{2_s^*-1}+U_{x_1,\lambda}^{2_s^*-2}\sum\limits_{j=2}^kU_{x_j,\lambda}+\sum\limits_{j=2}^kU_{x_j,\lambda}^{2_s^*-1}]\\
&\leq C\lambda^{\frac{N+2s}{2}}\big(\sum\limits_{j=2}^k\frac{1}{(1+\lambda|y-x_j|)^{N-2s}}\big)^{2_s^*-1}+\frac{C\lambda^{\frac{N+2s}{2}}}{(1+\lambda|y-x_1|)^{4s}}\sum\limits_{j=2}^k\frac{1}{(1+\lambda|y-x_j|)^{N-2s}}.
\end{aligned}
\end{equation*}
If $N-2s\geq\frac{N+2s}{2}-\tau,$ then we have
\begin{equation*}
\begin{aligned}
&\quad \frac{1}{(1+\lambda|y-x_1|)^{4s}}\sum\limits_{j=2}^k\frac{1}{(1+\lambda|y-x_j|)^{N-2s}}\\
&\leq\frac{1}{(1+\lambda|y-x_1|)^{\frac{N+2s}{2}+\tau}}\sum\limits_{j=2}^k\frac{1}{(1+\lambda|y-x_j|)^{\frac{N+2s}{2}-\tau}}\\
&\leq\frac{1}{(1+\lambda|y-x_1|)^{\frac{N+2s}{2}+\tau}}\sum\limits_{j=2}^k\frac{1}{(\lambda|x_1-x_j|)^{\frac{N+2s}{2}-\tau}}\\
&\leq\frac{1}{(1+\lambda|y-x_1|)^{\frac{N+2s}{2}+\tau}}(\frac{k}{\lambda})^{\frac{N+2s}{2}-\tau}.
\end{aligned}
\end{equation*}
If $N-2s<\frac{N+2s}{2}-\tau,$ then $4s>\frac{N+2s}{2}+\tau,$ we obtain that
\begin{equation*}
\begin{aligned}
&\quad \frac{1}{(1+\lambda|y-x_1|)^{4s}}\sum\limits_{j=2}^k\frac{1}{(1+\lambda|y-x_j|)^{N-2s}}\\
&\leq\frac{1}{(1+\lambda|y-x_1|)^{\frac{N+2s}{2}+\tau}}\sum\limits_{j=2}^k\frac{1}{(\lambda|x_1-x_j|)^{N-2s}}\\
&\leq\frac{1}{(1+\lambda|y-x_1|)^{\frac{N+2s}{2}+\tau}}(\frac{k}{\lambda})^{N-2s}.
\end{aligned}
\end{equation*}

Using the H\"{o}lder inequality, we have
\begin{equation*}
\begin{aligned}
&\quad\big(\sum\limits_{j=2}^k\frac{1}{(1+\lambda|y-x_j|)^{N-2s}}\big)^{2_s^*-1}\\
&\leq \sum\limits_{j=2}^k\frac{1}{(1+\lambda|y-x_j|)^{\frac{N+2s}{2}+\tau}}\big(\sum\limits_{j=2}^k\frac{1}{(1+\lambda|y-x_j|)^{\frac{N+2s}{4s}(\frac{N-2s}{2}-\frac{N-2s}{N+2s}\tau)}}\big)^{\frac{4s}{N-2s}}\\
&\leq C\sum\limits_{j=2}^k\frac{1}{(1+\lambda|y-x_j|)^{\frac{N+2s}{2}+\tau}}(\frac{k}{\lambda})^{\frac{N+2s}{N-2s}(\frac{N-2s}{2}-\frac{N-2s}{N+2s}\tau)}\\
&\leq C\sum\limits_{j=2}^k\frac{1}{(1+\lambda|y-x_j|)^{\frac{N+2s}{2}+\tau}}(\frac{1}{\lambda})^{s+\sigma}.
\end{aligned}
\end{equation*}
Thus
$$\|J_1\|_{**}\leq C(\frac{1}{\lambda})^{s+\sigma}.$$

Now, we estimate $J_2$. Note that $\zeta=0$ when $|(r,y'')-(r_0,y''_0)|\geq2\delta$ and
$\frac{1}{\lambda}\leq\frac{C}{1+\lambda|y-x_j|}$ when $|(r,y'')-(r_0,y''_0)|<2\delta$.
We have
\begin{equation*}
\begin{aligned}
|J_2|&\leq\frac{C}{\lambda^{2s}}\lambda^{\frac{N+2s}{2}}\sum\limits_{j=1}^k\frac{\zeta}{(1+\lambda|y-x_j|)^{N-2s}}\\
&\leq\frac{C}{\lambda^{\min(2s, N-\frac{N+2s}{2}-\tau)}}\lambda^{\frac{N+2s}{2}}\sum\limits_{j=1}^k\frac{1}{(1+\lambda|y-x_j|)^{\frac{N+2s}{2}+\tau}}.
\end{aligned}
\end{equation*}
If $N>4s+2\tau$, then $\|J_2\|_{**}\leq\frac{C}{\lambda^{s+\sigma}}$.

\vskip8pt

We have
\begin{equation*}
\begin{aligned}
J_3&=\sum\limits_{j=1}^kc(N,s)\big(\lim\limits_{\epsilon\rightarrow 0^+}\int_{B_{\frac{\delta}{4}}(y)\setminus B_\epsilon(y)}\frac{\big(\zeta(y)-\zeta(x)\big)U_{x_j,\lambda}(x)}{|x-y|^{N+2s}}dx\\
&\quad\quad\quad\quad\quad\quad\quad+\int_{\mathbb{R}^N\setminus B_{\frac{\delta}{4}}(y)}\frac{\big(\zeta(y)-\zeta(x)\big)U_{x_j,\lambda}(x)}{|x-y|^{N+2s}}dx\big)\\
&=:\sum\limits_{j=1}^kc(N,s)(J_{31}+J_{32}).
\end{aligned}
\end{equation*}

We first estimate $J_{31}$. From the definition of function $\zeta$, we have
$\zeta(y)-\zeta(x)=0$ when $x,y\in B_{\delta}(x_j)$ or $x,y\in \mathbb{R}^N\setminus \overline{B_{2\delta}(x_j)}$. So, $J_{31}\neq0$ only when $B_{\frac{\delta}{4}}(y)\subset B_{\frac{5}{2}\delta}(x_j)\setminus B_{\frac{1}{2}\delta}(x_j)$. It holds $\frac{3}{4}\delta\leq|y-x_j|\leq|x-y|+|x-x_j|\leq\frac{\delta}{4}+|x-x_j|\leq\frac{3}{2}|x-x_j|\leq\frac{15}{4}\delta$ when $B_{\frac{\delta}{4}}(y)\subset B_{\frac{5}{2}\delta}(x_j)\setminus B_{\frac{1}{2}\delta}(x_j)$.
Furthermore, we divide $J_{31}$ as following,
\begin{equation*}
\begin{aligned}
J_{31}&=
\lim\limits_{\epsilon\rightarrow 0^+}\int_{B_{\frac{\delta}{4}}(y)\setminus B_\epsilon(y)}\frac{\nabla\zeta(y)\cdot (y-x)U_{x_j,\lambda}(x)}{|x-y|^{N+2s}}dx+O(\lim\limits_{\epsilon\rightarrow 0^+}\int_{B_{\frac{\delta}{4}}(y)\setminus B_\epsilon(y)}\frac{U_{x_j,\lambda}(x)}{|x-y|^{N+2s-2}}dx)\\
&=:J_{311}+J_{312}
\end{aligned}
\end{equation*}
Note that $B_{\frac{\delta}{4}}(y)\setminus B_\epsilon(y)$ is symmetrical set. Then by the mean value theorem, we get that
\begin{equation*}
\begin{aligned}
|J_{311}|&=\big|\lim\limits_{\epsilon\rightarrow 0^+}\int_{B_{\frac{\delta}{4}}(y)\setminus B_\epsilon(y)}\frac{\nabla\zeta(y)\cdot (y-x)U_{x_j,\lambda}(x)}{|x-y|^{N+2s}}\big|\\
&=\big|C(N,s)\lim\limits_{\epsilon\rightarrow 0^+}\int_{B_{\frac{\delta}{4}}(0)\setminus B_\epsilon(0)}\frac{\nabla\zeta(y)\cdot z}{|z|^{N+2s}}\frac{\lambda^{\frac{N-2s}{2}}}{(1+\lambda^2|z+y-x_j|^2)^{\frac{N-2s}{2}}}\big|\\
&=\big|\frac{C(N,s)\lambda^{\frac{N-2s}{2}}}{2}\lim\limits_{\epsilon\rightarrow 0^+}\int_{B_{\frac{\delta}{4}}(0)\setminus B_\epsilon(0)}\frac{\nabla\zeta(y)\cdot z}{|z|^{N+2s}}\\
&\quad\quad\quad\quad\quad\quad\quad\quad\quad\quad\quad\times\big(\frac{1}{(1+\lambda^2|z+y-x_j|^2)^{\frac{N-2s}{2}}}-\frac{1}{(1+\lambda^2|-z+y-x_j|^2)^{\frac{N-2s}{2}}}\big)\big|\\
&\leq C\lambda^{\frac{N-2s}{2}+1}\int_{B_{\frac{\delta}{4}}(0)}\frac{|\nabla\zeta(y)|}{|z|^{N+2s-2}}
\frac{1}{(1+\lambda|(2\vartheta-1)z+y-x_j|)^{N-2s+1}}\\
&\leq\frac{C}{\lambda^{s+\sigma}}\lambda^{\frac{N+2s}{2}}\frac{1}{(1+\lambda|y-x_j|)^{\frac{N+2s}{2}+\tau}},
\end{aligned}
\end{equation*}
where $0<\vartheta<1$ and since $|(2\vartheta-1)z+y-x_j|\geq|y-x_j|-|(2\vartheta-1)z|\geq\frac{2}{3}|y-x_j|$ when $z\in B_{\frac{\delta}{4}}(0)$.
Similarly, we can obtain $$|J_{312}|\leq\frac{C}{\lambda^{s+\sigma}}\lambda^{\frac{N+2s}{2}}\frac{1}{(1+\lambda|y-x_j|)^{\frac{N+2s}{2}+\tau}}.$$

For the term $J_{32}$, we divide three cases:\\
Case1: If $y\in B_\delta(x_j)$,then
\begin{equation*}
\begin{aligned}
|J_{32}|&\leq\int_{\mathbb{R}^N\setminus \big(B_{\frac{\delta}{4}}(y)\cup B_{\delta}(x_j)\big)}\frac{1}{|x-y|^{N+2s}}\frac{\lambda^{\frac{N-2s}{2}}}{(1+\lambda|x-x_j|)^{N-2s}}\\
&\leq\frac{C}{\lambda^{2s}}\lambda^{\frac{N+2s}{2}}\frac{1}{(1+\lambda|y-x_j|)^{N-2s}}\int_{\mathbb{R}^N\setminus B_{\frac{\delta}{4}}(y)}\frac{1}{|x-y|^{N+2s}}\\
&\leq\frac{C}{\lambda^{s+\sigma}}\lambda^{\frac{N+2s}{2}}\frac{1}{(1+\lambda|y-x_j|)^{\frac{N+2s}{2}+\tau}}.
\end{aligned}
\end{equation*}
Case2: If $\delta\leq|y-x_j|\leq3\delta$, then by Lemma \ref{Lemma appendixaA30},
\begin{equation*}
\begin{aligned}
|J_{32}|&\leq\int_{\mathbb{R}^N\setminus B_{\frac{\delta}{4}}(y)}\frac{1}{|x-y|^{N+2s}}\frac{\lambda^{\frac{N-2s}{2}}}{(1+\lambda|x-x_j|)^{N-2s}}\\
&\leq C\lambda^{\frac{N+2s}{2}}\int_{\mathbb{R}^N\setminus B_{\frac{\delta\lambda}{4}}(\lambda y)}\frac{1}{|z-\lambda y|^{N+2s}}\frac{1}{(1+|z-\lambda x_j|)^{N-2s}}\\
&\leq C\lambda^{\frac{N+2s}{2}}\big(\frac{1}{(\lambda|y-x_j|)^N}+\frac{1}{\lambda^{2s}}\frac{1}{(\lambda|y-x_j|)^{N-2s}}\big)\\
&\leq\frac{C}{\lambda^{s+\sigma}}\lambda^{\frac{N+2s}{2}}\frac{1}{(1+\lambda|y-x_j|)^{\frac{N+2s}{2}+\tau}}.
\end{aligned}
\end{equation*}
Case3: Suppose that $|y-x_j|>3\delta$. Note that $|x-y|\geq|y-x_j|-|x-x_j|\geq\frac{1}{3}|y-x_j|$ when $|y-x_j|\geq3\delta$ and $|x-x_j|\leq2\delta$. Then we have
\begin{equation*}
\begin{aligned}
|J_{32}|&\leq\int_{B_{2\delta}(x_j)}\frac{1}{|x-y|^{N+2s}}\frac{\lambda^{\frac{N-2s}{2}}}{(1+\lambda|x-x_j|)^{N-2s}}\\
&\leq \frac{C}{\lambda^{\frac{N-2s}{2}}}\int_{B_{2\delta}(x_j)}\frac{1}{|x-y|^{N+2s}}\frac{1}{|x-x_j|^{N-2s}}\\
&\leq \frac{C\lambda^{\frac{N+2s}{2}}}{\lambda^{N}}\frac{1}{|y-x_j|^{\frac{N+2s}{2}+\tau}}\int_{B_{2\delta}(x_j)}\frac{1}{|x-x_j|^{N-2s}}\\
&\leq\frac{C}{\lambda^{s+\sigma}}\lambda^{\frac{N+2s}{2}}\frac{1}{(1+\lambda|y-x_j|)^{\frac{N+2s}{2}+\tau}}.
\end{aligned}
\end{equation*}

So, we obtain $\|J_3\|_{**}\leq\frac{C}{\lambda^{s+\sigma}}$.

\vskip8pt

As a result, we have proved that $\|l_k\|_{**}\leq\frac{C}{\lambda^{s+\sigma}}$.
\hfill{$\Box$}


\vskip8pt

 {\bf Proof of Proposition \ref{proposition2}.}
 Let $y=(y', y'')$, $y'\in \mathbb{R}^2$, $y''\in \mathbb{R}^{N-2}$. Set
$$\begin{array}{ll}
E=\{u : u\in C(\mathbb{R}^N)\cap H_s, \|u\|_{\ast}\leq\frac{1}{\lambda^s}, \sum\limits_{i=1}^k\int_{\mathbb{R}^N}Z^{2_s^*-2}_{x_i,\lambda}Z_{i,l}u=0, \ l=1,\ldots, N\}.\end{array}$$

By Proposition \ref{proposition1}, the solution
$\phi$ of \eqref{problem4} is equivalent to the following fixed point
problem:
$$\phi=A(\phi)=:L_{k}\big(\mathcal{F}(\phi)\big)+L_{k}\big(l_{k}\big).$$
Hence, it is sufficient to prove that the operator $A$ is a
contraction map from the complete space $E$ to itself. In fact,  for any $\phi\in E$, by Proposition \ref{proposition1}, Lemma
\ref{lemma5} and Lemma \ref{lemma6}, we have
\begin{displaymath}
\begin{aligned}
\|A(\phi)\|_\ast\leq&C\|L_{k}\big(\mathcal{F}(\phi)\big)\|_{\ast}+C\|L_{k}(l_{k})\|_{\ast}\\
\leq&C\Big[\|\mathcal{F}(\phi)\|_{\ast\ast}+\|l_{k}\|_{\ast\ast}\Big]\\
\leq&\frac{1}{\lambda^{s}},
\end{aligned}
\end{displaymath}
which shows that $A$ maps $E$ to $E$ itself and $E$ is invariant
under $A$ operator.

If $2_s^*\leq3$, then for $\forall \phi_1, \phi_2\in E$, we have
\begin{displaymath}
\begin{aligned}
\|A(\phi_{1})-A(\phi_{2})\|_*=&\|L_{k}\big(\mathcal {F}(\phi_{1})-\mathcal {F}(\phi_{2})\big)\|_{\ast}\\
\leq&C\|\mathcal {F}(\phi_{1})-\mathcal {F}(\phi_{2})\|_{\ast\ast}\\
\leq&C\|(|\phi_{1}|+|\phi_{2}|)^{2_s^*-2}|\phi_{1}-\phi_{2}|\|_{\ast\ast}\\
\leq&\frac{1}{2}\|\phi_{1}-\phi_{2}\|_{\ast}.
\end{aligned}
\end{displaymath}
The case $2_s^*>3$ can be discussed in a similar way.

\vskip8pt

Hence,  $A$ is a contraction map. The Banach fixed point theorem
tells us that there exists a unique solution $\phi\in E$ for the
problem \eqref{problem4}.\par

\vskip8pt

Finally, by Proposition \ref{proposition1}, we have
$$\|\phi\|_{\ast}\leq C(\frac{1}{\lambda})^{s+\sigma}  \ \hbox{and} \ |c_l|\leq C\|\mathcal {F}(\phi)+l_k\|_{\ast\ast}\leq C(\frac{1}{\lambda})^{s+\sigma}.$$
\hfill{$\Box$}

\section{Proof of the Main Theorem }

Let $\phi$ be the function obtained in Proposition \ref{proposition2} and $u_k=Z_{\overline{r}, \overline{y}'', \lambda}+\phi$. In order to use local Pohozaev identities, we quote the extension of $u_k$, that is $\tilde{u}_{k}=\tilde{Z}_{\overline{r}, \overline{y}'', \lambda}+\tilde{\phi}$.
$\tilde{Z}_{\overline{r}, \overline{y}''}$ and $\tilde{\phi}$ are extensions of $Z_{\overline{r}, \overline{y}''}$ and $\phi$, respectively.
Then we have
\begin{equation}\label{widetildeuLm}
\begin{cases}
\displaystyle\mathrm{div}(t^{1-2s}\nabla \widetilde{u}_k)=0, \quad &\mbox{in} \quad \mathbb{R}^{N+1}_+, \\
\displaystyle-\lim\limits_{t\rightarrow 0^+}t^{1-2s}\partial_t \widetilde{u}_k=\omega_s \big(-V(r,y'')u_k+(u_k)_+^{2^*_s-1}+\sum\limits_{l=1}^Nc_l\sum\limits_{j=1}^kZ_{x_j,\lambda}^{2^*_s-2}Z_{j,l}\big), \quad  & \mbox{on } \quad \mathbb{R}^{N}.
\end{cases}
\end{equation}
Without loss of generality, we may assume $\omega_s=1$. Multiplying \eqref{widetildeuLm} by $\frac{\partial \tilde{u}_{k}}{\partial y_{i}}$ ($i=3,\ldots, N$) and $\langle\nabla \tilde{u}_k, Y\rangle$ respectively, integrating by parts, we have the following two Pohozaev identities:
\begin{eqnarray}\label{firstpohozaevidentity1}
\begin{aligned}
&\quad-\int_{\partial''\mathcal B^{+}_{\rho}}t^{1-2s}\frac{\partial \tilde{u}_{k}}{\partial \nu}\frac{\partial \tilde{u}_{k}}{\partial y_{i}}+\frac{1}{2}\int_{\partial''\mathcal B^{+}_{\rho}}t^{1-2s}|\nabla \tilde{u}_{k}|^{2}\nu_{i}\\
&=\int_{B_{\rho}}\big(-V(r,y'')u_k+(u_k)_+^{2^*_s-1}+\sum\limits_{l=1}^Nc_l\sum\limits_{j=1}^kZ_{x_j,\lambda}^{2^*_s-2}Z_{j,l}\big)\frac{\partial u_k}{\partial y_i}, \ \ \ \ \ \ i=3,\ldots, N,
\end{aligned}
\end{eqnarray}
and
\begin{eqnarray}\label{secondtpohozaevidentity1}
\begin{aligned}
&\quad-\int_{\partial''\mathcal B^{+}_{\rho}}t^{1-2s}\langle\nabla \tilde{u}_k, Y\rangle \frac{\partial\tilde{u}_k}{\partial\nu} +\frac{1}{2}\int_{\partial''\mathcal B^{+}_{\rho}}t^{1-2s}|\nabla \tilde{u}_k|^{2}\langle Y,\nu\rangle +\frac{2s-N}{2}\int_{\partial\mathcal B^{+}_{\rho}}t^{1-2s}\frac{\partial\tilde{u}_k}{\partial\nu} \tilde{u}_k\\
&=\int_{ B_{\rho}}\big(-V(r,y'')u_k+(u_k)_+^{2^*_s-1}+\sum\limits_{l=1}^Nc_l\sum\limits_{j=1}^kZ_{x_j,\lambda}^{2^*_s-2}Z_{j,l}\big)\langle y, u_k\rangle.
\end{aligned}
\end{eqnarray}

In the following, we assume $\rho\in(2\delta,5\delta)$. We have the following lemma.
\begin{lemma}\label{}
Suppose that $(\overline{r}, \overline{y}'', \lambda)$ satisfies
\begin{eqnarray}\label{firstpohozaevidentity}
\begin{aligned}
&\quad-\int_{\partial''\mathcal B^{+}_{\rho}}t^{1-2s}\frac{\partial \tilde{u}_{k}}{\partial \nu}\frac{\partial \tilde{u}_{k}}{\partial y_{i}}+\frac{1}{2}\int_{\partial''\mathcal B^{+}_{\rho}}t^{1-2s}|\nabla \tilde{u}_{k}|^{2}\nu_{i}\\
&=\int_{B_{\rho}}\big(-V(r,y'')u_k+(u_k)_+^{2^*_s-1}\big)\frac{\partial u_k}{\partial y_i}, \ \ \ \ \ \ i=3,\ldots, N,
\end{aligned}
\end{eqnarray}
\begin{eqnarray}\label{secondtpohozaevidentity}
\begin{aligned}
&\quad-\int_{\partial''\mathcal B^{+}_{\rho}}t^{1-2s}\langle\nabla \tilde{u}_k, Y\rangle \frac{\partial\tilde{u}_k}{\partial\nu} +\frac{1}{2}\int_{\partial''\mathcal B^{+}_{\rho}}t^{1-2s}|\nabla \tilde{u}_k|^{2}\langle Y,\nu\rangle +\frac{2s-N}{2}\int_{\partial\mathcal B^{+}_{\rho}}t^{1-2s}\frac{\partial\tilde{u}_k}{\partial\nu} \tilde{u}_k\\
&=\int_{ B_{\rho}}\big(-V(r,y'')u_k+(u_k)_+^{2^*_s-1}\big)\langle y, u_k\rangle
\end{aligned}
\end{eqnarray}
and
\begin{eqnarray}\label{thirdpohozaevidentity}
\begin{aligned}
\int_{\mathbb{R}^N}\big((-\Delta)^su_k+V(r,y'')u_k-(u_k)_+^{2^*_s-1}\big)\frac{\partial Z_{\overline{r}, \overline{y}'', \lambda}}{\partial\lambda}=0.
\end{aligned}
\end{eqnarray}
Then we have $c_l=0$, $l=1,\ldots, N$.
\end{lemma}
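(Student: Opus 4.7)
The plan is to use the three hypothesised identities together with the PDE satisfied by $u_k$ to set up a homogeneous $N\times N$ linear system in the unknowns $(c_1,\ldots,c_N)$, and then show that its coefficient matrix is non-singular once $k$ is large enough. First I would derive the ``full'' Pohozaev identities for $u_k$ directly from the extension problem \eqref{widetildeuLm}: multiplying the PDE for $\tilde u_k$ by $\partial_{y_i}\tilde u_k$ and by $\langle\nabla\tilde u_k,Y\rangle$ and integrating by parts over $\mathcal B^{+}_{\rho}$ yields identities whose left-hand sides are literally those of \eqref{firstpohozaevidentity} and \eqref{secondtpohozaevidentity}, but whose right-hand sides carry one extra term
$\int_{B_\rho}\bigl(\sum_{l=1}^N c_l\sum_{j=1}^k Z_{x_j,\lambda}^{2^*_s-2}Z_{j,l}\bigr)\cdot(\text{test})$.
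Subtracting the hypothesised relations cancels everything common and leaves the $N-1$ equations
$\sum_l c_l\,M_{i,l}=0$ $(i=3,\ldots,N)$ and $\sum_l c_l\,M_{Y,l}=0$, with
$M_{i,l}:=\sum_{j=1}^k\int_{B_\rho}Z_{x_j,\lambda}^{2^*_s-2}Z_{j,l}\,\partial_{y_i}u_k$ and
$M_{Y,l}:=\sum_{j=1}^k\int_{B_\rho}Z_{x_j,\lambda}^{2^*_s-2}Z_{j,l}\,\langle y,\nabla u_k\rangle$. Substituting the PDE into \eqref{thirdpohozaevidentity} supplies the $N$th equation $\sum_l c_l\,M_{\lambda,l}=0$, where $M_{\lambda,l}:=\sum_{j=1}^k\int_{\mathbb{R}^N}Z_{x_j,\lambda}^{2^*_s-2}Z_{j,l}\,\partial_\lambda Z_{\overline r,\overline y'',\lambda}$.

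The heart of the argument is then an explicit computation of the coefficient matrix $\mathcal M$ to leading order. Since $B_\rho$ is centred near $x_1$ while each $U_{x_j,\lambda}$ concentrates at its own pole, the contributions from $j\geq 2$ are negligible after the sum in $j$, and the error in replacing $u_k$ by $Z_{x_1,\lambda}$ is controlled by $\|\phi\|_*\leq C\lambda^{-s-\sigma}$ from Proposition \ref{proposition2}. Using
$\partial_{\overline r}U_{x_1,\lambda}=-\partial_{y_1}U_{x_1,\lambda}$, $\partial_{\overline y''_l}U_{x_1,\lambda}=-\partial_{y_l}U_{x_1,\lambda}$,
the bubble identity
$\langle y-x_1,\nabla U_{x_1,\lambda}\rangle=\lambda\,\partial_\lambda U_{x_1,\lambda}-\tfrac{N-2s}{2}U_{x_1,\lambda}$,
and the even/odd parity of $U_{x_1,\lambda}$ in each $y_i-x_{1,i}$, the leading order of $\mathcal M$ takes the form
\begin{displaymath}
\mathcal M=k\begin{pmatrix}
a\lambda^{-2} & 0 & 0 & \cdots & 0\\
b\lambda^{-1} & -c\overline r\lambda^{2} & -c\overline y''_3\lambda^{2} & \cdots & -c\overline y''_N\lambda^{2}\\
0 & 0 & -c\lambda^{2} & 0 & \cdots\\
\vdots & \vdots & \vdots & \ddots & \vdots\\
0 & 0 & \cdots & 0 & -c\lambda^{2}
\end{pmatrix}+\text{lower order},
\end{displaymath}
with rows ordered $(\lambda,Y,3,\ldots,N)$ and $a,b,c>0$ the standard non-degeneracy integrals of the bubble. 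Expanding the determinant along the top row and then along rows $3,\ldots,N$ reduces to a single $2\times 2$ minor of determinant $-ac\overline r\,k^2\neq 0$ (because $\overline r\to r_0>0$), so $\det\mathcal M\sim(-1)^{N-1}a\,c^{N-1}\overline r\,k^N\lambda^{2N-4}\neq 0$ at leading order.

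To finish, a perturbation argument is needed to rule out the lower-order remainder destroying invertibility. After rescaling column $l$ by $\lambda^{n_l}$ (with $n_1=-1$, $n_l=1$ for $l\geq 2$) the rescaled matrix has entries of comparable size and the correction becomes $O(\lambda^{-\sigma})$, so for $k$ large $\mathcal M$ is still invertible and the system forces $c_1=\cdots=c_N=0$. The main obstacle is precisely this last step: the unnormalised entries of $\mathcal M$ live at very different scales ($\lambda^{-2}$, $\lambda^{-1}$, $\lambda^2$), so the perturbative error has to be smaller than the leading entry in each block simultaneously. This is where the sharp estimate $\|\phi\|_*\leq C\lambda^{-s-\sigma}$ from Proposition \ref{proposition2} and the angular separation $|x_1-x_j|\geq c\overline r\,j/k$ of the bump centres are used decisively.
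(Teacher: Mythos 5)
Your plan follows the same route as the paper: subtract the hypothesised identities from the full local Pohozaev identities and pair \eqref{thirdpohozaevidentity} with the equation to obtain a homogeneous $N\times N$ linear system $\sum_l c_l M_{\cdot,l}=0$, then argue its coefficient matrix is non-singular at leading order using the same non-degeneracy integrals \eqref{aum5}--\eqref{aum7}, the scale separation $n_1=-1$, $n_l=1$, and the bound $\|\phi\|_*\lesssim \lambda^{-s-\sigma}$ to control the cross and $\phi$-terms. The only presentational difference is that you compute an explicit (near-triangular) determinant while the paper eliminates row by row to obtain $c_l=o(\lambda^{-2}|c_1|)$ for $l\ge 2$ and then $c_1=0$; both are the same argument packaged differently.
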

{\bf Proof.}  By \eqref{firstpohozaevidentity1}, \eqref{secondtpohozaevidentity1}, \eqref{firstpohozaevidentity} and \eqref{secondtpohozaevidentity}, we have
\begin{eqnarray}\label{cl1}
\begin{aligned}
\sum\limits_{l=1}^Nc_l\sum\limits_{j=1}^k\int_{B_{\rho}}Z_{x_j,\lambda}^{2^*_s-2}Z_{j,l}\frac{\partial u_k}{\partial y_i}=0, \ i=3,\ldots,N, \\  \sum\limits_{l=1}^Nc_l\sum\limits_{j=1}^k\int_{B_{\rho}}Z_{x_j,\lambda}^{2^*_s-2}Z_{j,l}\langle y, \nabla u_k\rangle=0.
\end{aligned}
\end{eqnarray}

Note that $\zeta=0$ in $\mathbb{R}^N\setminus B_{\rho}$. By \eqref{thirdpohozaevidentity} and \eqref{cl1}, we have
\begin{eqnarray}\label{aum1}
\begin{aligned}
\sum\limits_{l=1}^Nc_l\sum\limits_{j=1}^k\int_{\mathbb{R}^N}Z_{x_j,\lambda}^{2^*_s-2}Z_{j,l}v=\sum\limits_{l=1}^Nc_l\sum\limits_{j=1}^k\int_{B_{\rho}}Z_{x_j,\lambda}^{2^*_s-2}Z_{j,l}v=0,
\end{aligned}
\end{eqnarray}
for $v=\frac{\partial u_k}{\partial y_i}$, $v=\langle\nabla u_k, y\rangle $ and $v=\frac{\partial Z_{\overline{r}, \overline{y}'', \lambda}}{\partial\lambda}$.

By direct calculations, we have
\begin{eqnarray}\label{aum5}
\begin{aligned}
\sum\limits_{j=1}^k\int_{B_{\rho}}Z_{x_j,\lambda}^{2^*_s-2}Z_{j,2}\langle y', \nabla_{y'} Z_{\overline{r},\overline{y}'',\lambda}\rangle=k\lambda^{2}(a_1+o(1)),
\end{aligned}
\end{eqnarray}
\begin{eqnarray}\label{aum6}
\begin{aligned}
\sum\limits_{j=1}^k\int_{B_{\rho}}Z_{x_j,\lambda}^{2^*_s-2}Z_{j,i}\frac{\partial Z_{\overline{r},\overline{y}'',\lambda}}{\partial y_i}=k\lambda^{2}(a_2+o(1)), \ \ i=3,\ldots,N,
\end{aligned}
\end{eqnarray}
and
\begin{eqnarray}\label{aum7}
\begin{aligned}
\sum\limits_{j=1}^k\int_{B_{\rho}}Z_{x_j,\lambda}^{2^*_s-2}Z_{j,1}\frac{\partial Z_{\overline{r},\overline{y}'',\lambda}}{\partial \lambda}=\frac{k}{\lambda^{2}}(a_3+o(1)),
\end{aligned}
\end{eqnarray}
where $a_1>0$, $a_2>0$ and $a_3>0$.

Furthermore, we have that
\begin{eqnarray}\label{aum3}
\begin{aligned}
&\quad\sum\limits_{l=1}^Nc_l\sum\limits_{j=1}^k\int_{B_{\rho}}Z_{x_j,\lambda}^{2^*_s-2}Z_{j,l}\langle y, \nabla Z_{\overline{r},\overline{y}'',\lambda}\rangle\\
&=\sum\limits_{j=1}^k\int_{B_{\rho}}Z_{x_j,\lambda}^{2^*_s-2}Z_{j,2}\langle y', \nabla_{y'} Z_{\overline{r},\overline{y}'',\lambda}\rangle c_2+O(\frac{k}{\lambda^{N-2}}|c_2|)+o(k\lambda^2\sum\limits_{l=3}^N|c_l|)+o(k|c_1|)\\
&=k\lambda^2(a_1+o(1))c_2+o(k\lambda^2\sum\limits_{l=3}^N|c_l|)+o(k|c_1|)
\end{aligned}
\end{eqnarray}
and
\begin{eqnarray}\label{aum4}
\begin{aligned}
&\quad\sum\limits_{l=1}^Nc_l\sum\limits_{j=1}^k\int_{B_{\rho}}Z_{x_j,\lambda}^{2^*_s-2}Z_{j,l}\frac{\partial Z_{\overline{r},\overline{y}'',\lambda}}{\partial y_i}\\
&=\sum\limits_{j=1}^k\int_{B_{\rho}}Z_{x_j,\lambda}^{2^*_s-2}Z_{j,i}\frac{\partial Z_{\overline{r},\overline{y}'',\lambda}}{\partial y_i} c_i+o(k\lambda^2\sum\limits_{l\neq 1,i}|c_l|)+o(k|c_1|)\\
&=k\lambda^2(a_2+o(1))c_i+o(k\lambda^2\sum\limits_{l\neq 1,i}|c_l|)+o(k|c_1|), \ \ \ \ i=3,\ldots,N.
\end{aligned}
\end{eqnarray}

Since $\phi$ is a solution to \eqref{problem4}, by fractional elliptical equation estimates (see for example \cite{Silvestre}), we can obtain $\phi\in C^1$ when $s>\frac{1}{2}$.
Using integrating by parts and $\|\phi\|_*\leq\frac{C}{\lambda^{s+\sigma}}$, we have
\begin{eqnarray}\label{}
\begin{aligned}
\sum\limits_{l=1}^Nc_l\sum\limits_{j=1}^k\int_{B_{\rho}}Z_{x_j,\lambda}^{2^*_s-2}Z_{j,l}v=o(k\lambda^2\sum\limits_{l=2}^N|c_l|)+o(k|c_1|),
\end{aligned}
\end{eqnarray}
for $v=\langle y, \nabla\phi_{\overline{r},\overline{y}'',\lambda}\rangle$ and
$v=\frac{\partial\phi_{\overline{r},\overline{y}'',\lambda}}{\partial y_i}$.

\vskip8pt

It follows from \eqref{aum1}, we obtain that
\begin{eqnarray}\label{aum2}
\begin{aligned}
\sum\limits_{l=1}^Nc_l\sum\limits_{j=1}^k\int_{B_{\rho}}Z_{x_j,\lambda}^{2^*_s-2}Z_{j,l}v=o(k\lambda^2\sum\limits_{l=2}^N|c_l|)+o(k|c_1|),
\end{aligned}
\end{eqnarray}
for $v=\langle\nabla y, Z_{\overline{r},\overline{y}'',\lambda}\rangle$ and $v=\frac{\partial Z_{\overline{r},\overline{y}'',\lambda}}{\partial y_i}$.

By \eqref{aum3}, \eqref{aum4} and \eqref{aum2}, we have
\begin{eqnarray}\label{aum8}
\begin{aligned}
c_l=o(\frac{1}{\lambda^2}|c_1|), \ l=2,\ldots,N.
\end{aligned}
\end{eqnarray}

From \eqref{aum1}, \eqref{aum7} and \eqref{aum8}, we deduce that
\begin{eqnarray}\label{}
\begin{aligned}
0&=\sum\limits_{l=1}^Nc_l\sum\limits_{j=1}^k\int_{B_{\rho}}Z_{x_j,\lambda}^{2^*_s-2}Z_{j,1}\frac{\partial Z_{\overline{r},\overline{y}'',\lambda}}{\partial \lambda}\\
&=\sum\limits_{j=1}^k\int_{B_{\rho}}Z_{x_j,\lambda}^{2^*_s-2}Z_{j,1}\frac{\partial Z_{\overline{r},\overline{y}'',\lambda}}{\partial \lambda}c_1+o(\frac{k}{\lambda^2})c_1\\
&=k(a_3+o(1))c_1+o(\frac{k}{\lambda^2})c_1,
\end{aligned}
\end{eqnarray}
which implies that $c_1=0$. We also have $c_l=0, \ l=2,\ldots,N.$  \hfill{$\Box$}

Note that
\begin{eqnarray*}
\begin{aligned}
&\frac{2s-N}{2}\int_{\partial\mathcal B^{+}_{\rho}}t^{1-2s}\frac{\partial\tilde{u}_k}{\partial\nu} \tilde{u}_k\\
=&\frac{2s-N}{2}\int_{\partial''\mathcal B^{+}_{\rho}}t^{1-2s}\frac{\partial\tilde{u}_k}{\partial\nu} \tilde{u}_k+\frac{2s-N}{2}\int_{B_{\rho}}
\big(-V(r,y'')u_k+(u_k)_+^{2^*_s-1}+\sum\limits_{l=1}^Nc_l\sum\limits_{j=1}^kZ_{x_j,\lambda}^{2^*_s-2}Z_{j,l}\big)u_k,
\end{aligned}
\end{eqnarray*}
\begin{eqnarray*}
\begin{aligned}
&\quad\int_{B_{\rho}}\big(-V(r,y'')u_k+(u_k)_+^{2^*_s-1}\big)\langle y, \nabla u_k\rangle\\
&=\int_{B_{\rho}}\big(-\frac{1}{2}V(r,y'')\langle y, \nabla u_k^2\rangle+\frac{1}{2^*_s}\langle y, \nabla (u_k)_+^{2^*_s}\rangle\big)\\
&=-\frac{1}{2}\int_{\partial B_{\rho}}V(r,y'')u_k^2\langle y, \nu\rangle+\frac{1}{2}\int_{B_{\rho}}\big(NV(r,y'')+\langle\nabla V(r,y''), y\rangle\big)u_k^2\\
&\quad\quad+
\frac{1}{2^*_s}\int_{\partial B_{\rho}}(u_k)_+^{2^*_s}\langle y, \nu\rangle+\frac{2s-N}{2}\int_{B_{\rho}}(u_k)_+^{2^*_s},
\end{aligned}
\end{eqnarray*}
and $\sum\limits_{l=1}^Nc_l\int_{B_{\rho}}
\sum\limits_{j=1}^kZ_{x_j,\lambda}^{2^*_s-2}Z_{j,l}\phi=0$.
We find that \eqref{secondtpohozaevidentity} is equivalent to
\begin{eqnarray}\label{aum12}
\begin{aligned}
&\quad
\int_{ B_{\rho}}\big(sV(r,y'')+\frac{1}{2}\langle\nabla V(r,y''), y\rangle\big)u_k^2\\
&=-\int_{\partial''\mathcal B^{+}_{\rho}}t^{1-2s}\langle\nabla \tilde{u}_k, Y\rangle \frac{\partial\tilde{u}_k}{\partial\nu} +\frac{1}{2}\int_{\partial''\mathcal B^{+}_{\rho}}t^{1-2s}|\nabla \tilde{u}_k|^{2}\langle Y,\nu\rangle +\frac{2s-N}{2}\int_{\partial''\mathcal B^{+}_{\rho}}t^{1-2s}\frac{\partial\tilde{u}_k}{\partial\nu} \tilde{u}_k\\
&\quad+\frac{1}{2}\int_{\partial B_{\rho}}V(r,y'')u_k^2\langle y,\nu\rangle-\frac{1}{2^*_{s}}\int_{\partial B_{\rho}}(u_k)_+^{2^*_{s}}\langle y,\nu\rangle+\frac{2s-N}{2}\sum\limits_{l=1}^Nc_l\int_{B_{\rho}}
\sum\limits_{j=1}^kZ_{x_j,\lambda}^{2^*_s-2}Z_{j,l}Z_{\overline{r},\overline{y}'',\lambda}.
\end{aligned}
\end{eqnarray}

Similarly, \eqref{firstpohozaevidentity} is equivalent to
\begin{eqnarray}\label{aum11}
\begin{aligned}
&\quad\frac{1}{2}\int_{B_{\rho}}\frac{\partial V(r,y'')}{\partial y_i''}u_k^2\\
&=\int_{\partial''\mathcal B^{+}_{\rho}}t^{1-2s}\frac{\partial \tilde{u}_{k}}{\partial \nu}\frac{\partial \tilde{u}_{k}}{\partial y_{i}}-\frac{1}{2}\int_{\partial''\mathcal B^{+}_{\rho}}t^{1-2s}|\nabla \tilde{u}_{k}|^{2}\nu_{i}\\
&\quad+\frac{1}{2}\int_{\partial B_{\rho}}V(r,y'')u_k^2\nu_i
+\frac{1}{2_s^*}\int_{\partial B_{\rho}}u_k^{2^*_s}\nu_i  ,  \ \ i=3,\ldots, N.
\end{aligned}
\end{eqnarray}

\begin{lemma}\label{relation estimate}
Relations \eqref{aum12} and \eqref{aum11} are equivalent to
\begin{equation}\label{aum9}
\begin{aligned}
\int_{B_{\rho}}\big(sV(r,y'')+\frac{1}{2}\langle\nabla V(r,y''),y\rangle\big)u_k^2=O(\frac{k}{\lambda^{2s+\sigma}})
\end{aligned}
\end{equation}
and
\begin{equation}\label{aum10}
\begin{aligned}
\int_{B_{\rho}}\frac{\partial V(r,y'')}{\partial y_i}u_k^2=O(\frac{k}{\lambda^{2s+\sigma}}), \ \ i=3,\ldots,N.
\end{aligned}
\end{equation}

\end{lemma}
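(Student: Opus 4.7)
The plan is to show that every boundary and correction term on the right--hand sides of \eqref{aum12} and \eqref{aum11} is of order $O(k/\lambda^{2s+\sigma})$, so that these identities reduce exactly to \eqref{aum9} and \eqref{aum10}. The contributions to be estimated are of three types: (i) weighted half--space boundary integrals over $\partial''\mathcal B^{+}_{\rho}$ involving $\tilde u_k$, $\nabla\tilde u_k$ and the degenerate weight $t^{1-2s}$; (ii) flat boundary integrals over $\partial B_\rho$ of $V(r,y'')u_k^2$ and $(u_k)_+^{2^*_s}$; and (iii) the Lagrange--multiplier correction $\sum_{l=1}^{N} c_l\sum_{j=1}^k \int_{B_\rho} Z_{x_j,\lambda}^{2^*_s-2}Z_{j,l}Z_{\overline r,\overline y'',\lambda}$.

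The common starting point is that $Z_{\overline r,\overline y'',\lambda}$ is concentrated at the points $x_j$, and because $\rho\in(2\delta,5\delta)$ while $|(\overline r,\overline y'')-(r_0,y''_0)|\le\theta\ll\delta$, an appropriate choice of $\rho$ in the allowed interval places every $x_j$ at Euclidean distance at least a fixed $c_0>0$ from $\partial B_\rho$. Combined with Proposition \ref{proposition2} and the defining pointwise $*$--norm bound, this yields on $\partial B_\rho$ a decay estimate of the form $|u_k(y)|\leq C k\,\lambda^{-\frac{N-2s}{2}-\tau}$, and analogously for $\nabla u_k$ (up to the usual extra factor of $\lambda$). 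For the extension $\tilde u_k=\mathcal P_s[u_k]$ I would split $\tilde u_k=\tilde Z_{\overline r,\overline y'',\lambda}+\tilde\phi$: the first summand is controlled by the explicit fractional bubble formulas for the extension of each $U_{x_j,\lambda}$, which also give sharp bounds on $|\nabla\tilde Z_{x_j,\lambda}|$ away from $(x_j,0)$, while the second summand is controlled by applying Poisson--kernel derivative estimates to $\|\phi\|_*\le C\lambda^{-s-\sigma}$. Integrating these pointwise bounds against $d\sigma$ and against $t^{1-2s}\,d\sigma$ on the half--sphere, and using $\lambda\sim k^{(N-2s)/(N-4s)}$ together with the hypothesis $N>4s+2\tau$, one checks that each integral in (i) and (ii) is $O(k/\lambda^{2s+\sigma})$. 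For (iii), combining $|c_l|\le C\lambda^{-s-\sigma}$ with the near--orthogonality $\int U_{x_j,\lambda}^{2^*_s-1}\partial_a U_{x_j,\lambda}=0$ (for $a=\lambda,\overline r,\overline y''$) shows that $\sum_j\int Z_{x_j,\lambda}^{2^*_s-2}Z_{j,l}Z_{\overline r,\overline y'',\lambda}$ only captures cut--off errors, which is sufficient to conclude that term (iii) is also $O(k/\lambda^{2s+\sigma})$.

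The main obstacle will be (i). The degenerate weight $t^{1-2s}$, together with the need to bound the quadratic quantity $|\nabla\tilde u_k|^2$ on a curved half--sphere of fixed radius $\rho$, forces us to transfer pointwise $*$--norm information for $u_k$ to sharp decay of $\nabla\tilde u_k$ in both the horizontal and vertical directions, uniformly in $\lambda$ and $k$. This requires careful tracking of the Poisson--kernel derivatives and of the $t^{1-2s}$ factor near the base, which is the technical heart of the argument. Once those weighted boundary estimates are established, the equivalences \eqref{aum12}$\Leftrightarrow$\eqref{aum9} and \eqref{aum11}$\Leftrightarrow$\eqref{aum10} follow by collecting terms.
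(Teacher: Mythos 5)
Your high-level decomposition into (i) weighted half-space boundary integrals, (ii) flat boundary integrals, and (iii) the Lagrange-multiplier correction matches the paper, and the splitting $\tilde u_k=\tilde Z_{\bar r,\bar y'',\lambda}+\tilde\phi$ is the same starting point. However, there is a genuine gap in the treatment of the $\tilde\phi$ contribution to (i), and the paper's route around it is essentially different from what you propose.

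You plan to ``apply Poisson-kernel derivative estimates to $\|\phi\|_*\le C\lambda^{-s-\sigma}$'' to get pointwise decay of $\nabla\tilde\phi$ on the half-sphere, then integrate against $t^{1-2s}\,d\sigma$. This cannot work uniformly down to the base $t=0$: from $P_s(x,t)=\beta(N,s)t^{2s}(|x|^2+t^2)^{-\frac{N+2s}{2}}$ one has $\int_{\mathbb R^N}|\nabla_x P_s(x,t)|\,dx\sim t^{-1}$, so the best pointwise bound obtainable from $\|\phi\|_\infty$ alone is $|\nabla_y\tilde\phi(y,t)|\le Ct^{-1}\|\phi\|_\infty$. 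Feeding this into $\int_{\partial''\mathcal B^+_\rho}t^{1-2s}|\nabla\tilde\phi|^2\,dS$ produces a $\int_0 t^{-1-2s}\,dt$-type divergence near the base. A pointwise bound that is actually integrable against $t^{1-2s}$ would require quantitative $C^\alpha$ or $C^1$ control of $\phi$ on the trace, which the $*$-norm does not supply; obtaining it would reintroduce the regularity theory you were hoping to avoid. The paper bypasses this entirely via an energy (Caccioppoli-type) argument in Lemma \ref{le:lemma appendixB.4}: multiply the degenerate equation for $\tilde\phi$ by $\varphi^2\tilde\phi$ with $\varphi$ a cut-off supported in the annulus $D_1$, integrate by parts to control the solid integral $\int_{D_2}t^{1-2s}|\nabla\tilde\phi|^2$ by the zero-order Poisson bound \eqref{B.4.2}, and then pick $\rho\in(2\delta,5\delta)$ by the mean value theorem so that the surface integral is controlled by the shell average. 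The freedom to choose $\rho$ is thus consumed by the mean-value selection, not by separating $x_j$ from $\partial B_\rho$ as in your sketch; that separation is automatic for all $\rho\in(2\delta,5\delta)$.

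Two smaller inaccuracies. First, for (ii) the crucial observation in the paper is that $\zeta=0$ on $\partial B_\rho$, hence $Z_{\bar r,\bar y'',\lambda}=0$ and $u_k=\phi$ there; your asserted pointwise bound $|u_k(y)|\le Ck\lambda^{-\frac{N-2s}{2}-\tau}$ on $\partial B_\rho$ is not what the $*$-norm gives (it yields $|\phi(y)|\le C\|\phi\|_*\,k\,\lambda^{-\tau}$, hence $Ck\lambda^{-s-\sigma-\tau}$) and, as written, overstates the decay. Second, for (iii) your near-orthogonality $\int U^{2^*_s-1}\partial_a U=0$ only handles the diagonal terms $\int Z_{x_j,\lambda}^{2^*_s-1}Z_{j,l}$ up to cut-off error; the off-diagonal interaction terms $\sum_{i\neq j}\int Z_{x_j,\lambda}^{2^*_s-2}Z_{j,l}Z_{x_i,\lambda}$ are not cut-off errors and contribute the dominant $O(k\lambda^{-2s})$ piece, which must then be paired with $|c_l|\le C\lambda^{-s-\sigma}$. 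You should address these separately rather than subsuming them under ``cut-off errors.''
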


\begin{proof}  We only give the proof for \eqref{aum9}. The proof of \eqref{aum10} is similar.

Note that $\tilde{u}_k=\tilde{Z}_{\overline{r},\overline{y}'',\lambda}+\tilde{\phi}$. We have
\begin{eqnarray}\label{}
\begin{aligned}
&\quad\int_{\partial''\mathcal B^{+}_{\rho}}t^{1-2s}\langle\nabla \tilde{u}_k, Y\rangle \frac{\partial\tilde{u}_k}{\partial\nu}\\
&=\int_{\partial''\mathcal B^{+}_{\rho}}t^{1-2s}\langle\nabla \widetilde{Z}_{\overline{r},\overline{y}'',\lambda}, Y\rangle \frac{\partial\widetilde{Z}_{\overline{r},\overline{y}'',\lambda}}{\partial\nu}+\int_{\partial''\mathcal B^{+}_{\rho}}t^{1-2s}\langle\nabla \widetilde{\phi}, Y\rangle \frac{\partial\widetilde{\phi}}{\partial\nu}\\
&\quad\quad+\int_{\partial''\mathcal B^{+}_{\rho}}t^{1-2s}\langle\nabla \widetilde{Z}_{\overline{r},\overline{y}'',\lambda}, Y\rangle \frac{\partial\widetilde{\phi}}{\partial\nu}
+\int_{\partial''\mathcal B^{+}_{\rho}}t^{1-2s}\langle\nabla \widetilde{\phi}, Y\rangle \frac{\partial\widetilde{Z}_{\overline{r},\overline{y}'',\lambda}}{\partial\nu}.
\end{aligned}
\end{eqnarray}

Using Lemma \ref{estimate for Z}, we can obtain
\begin{eqnarray}\label{aum13}
\begin{aligned}
&\quad|\int_{\partial''\mathcal B^{+}_{\rho}}t^{1-2s}\langle\nabla \widetilde{Z}_{\overline{r},\overline{y}'',\lambda}, Y\rangle \frac{\partial\widetilde{Z}_{\overline{r},\overline{y}'',\lambda}}{\partial\nu}|\\
&\leq \frac{C}{\lambda^{N-2s}}\int_{\partial''\mathcal B^{+}_{\rho}}t^{1-2s}(\sum\limits_{i=1}^k\frac{1}{(1+|y-x_{i}|)^{N-2s+1}})^2\\
&\leq \frac{Ck^2}{\lambda^{N-2s}}\int_{\partial''\mathcal B^{+}_{\rho}}\frac{t^{1-2s}}{(1+|y-x_{1}|)^{2N-4s+2}}\\
&\leq \frac{Ck^2}{\lambda^{N-2s}}.
\end{aligned}
\end{eqnarray}
By Lemma \ref{le:lemma appendixB.4},
\begin{equation}\label{aum15}
\begin{aligned}
|\int_{\partial''\mathcal B^{+}_{\rho}}t^{1-2s}\langle\nabla \widetilde{\phi}, Y\rangle \frac{\partial\widetilde{\phi}}{\partial\nu}|\leq C\int_{\partial''\mathcal B^{+}_{\rho}}t^{1-2s}|\nabla\widetilde{\phi}|^2\leq \frac{Ck\|\phi\|_*^2}{\lambda^{\tau}}.
\end{aligned}
\end{equation}
By the process of the proof of  \eqref{aum13} and \eqref{aum15}, we also have
\begin{equation*}
\begin{aligned}
|\int_{\partial''\mathcal B^{+}_{\rho}}t^{1-2s}\langle\nabla \widetilde{Z}_{\overline{r},\overline{y}'',\lambda}, Y\rangle \frac{\partial\widetilde{\phi}}{\partial\nu}
+\int_{\partial''\mathcal B^{+}_{\rho}}t^{1-2s}\langle\nabla \widetilde{\phi}, Y\rangle \frac{\partial\widetilde{Z}_{\overline{r},\overline{y}'',\lambda}}{\partial\nu}|\leq\frac{Ck\|\phi\|_*}{\lambda^{\frac{N-2s}{2}}}.
\end{aligned}
\end{equation*}
Note that $N>4s+2\tau$. So we have proved that $$|\int_{\partial''\mathcal B^{+}_{\rho}}t^{1-2s}\langle\nabla \tilde{u}_k, Y\rangle \frac{\partial\tilde{u}_k}{\partial\nu}|\leq \frac{Ck}{\lambda^{2s+\sigma}}.$$

Similarly, we can prove $$|\int_{\partial''\mathcal B^{+}_{\rho}}t^{1-2s}|\nabla \tilde{u}_k|^{2}\langle Y,\nu\rangle|\leq \frac{Ck}{\lambda^{2s+\sigma}}.$$

Next, we estimate the term $\int_{\partial''\mathcal B^{+}_{\rho}}t^{1-2s}\frac{\partial\tilde{u}_k}{\partial\nu} \tilde{u}_k$,
\begin{equation*}
\begin{aligned}
&\quad\int_{\partial''\mathcal B^{+}_{\rho}}t^{1-2s}\frac{\partial\tilde{u}_k}{\partial\nu} \tilde{u}_k\\
&=\int_{\partial''\mathcal B^{+}_{\rho}}t^{1-2s}\frac{\partial\widetilde{Z}_{r'',\overline{y}'',\lambda}}{\partial\nu} \widetilde{Z}_{r'',\overline{y}'',\lambda}+\int_{\partial''\mathcal B^{+}_{\rho}}t^{1-2s}\frac{\partial\widetilde{\phi}}{\partial\nu}\widetilde{\phi}\\
&\quad+\int_{\partial''\mathcal B^{+}_{\rho}}t^{1-2s}\frac{\partial\widetilde{Z}_{r'',\overline{y}'',\lambda}}{\partial\nu} \widetilde{\phi}+\int_{\partial''\mathcal B^{+}_{\rho}}t^{1-2s}\frac{\partial\widetilde{\phi}}{\partial\nu} \widetilde{Z}_{r'',\overline{y}'',\lambda}.
\end{aligned}
\end{equation*}
By Lemma \ref{estimate for Z},
\begin{eqnarray}
\begin{aligned}
&\quad|\int_{\partial''\mathcal B^{+}_{\rho}}t^{1-2s}\frac{\partial\widetilde{Z}_{r'',\overline{y}'',\lambda}}{\partial\nu} \widetilde{Z}_{r'',\overline{y}'',\lambda}|\\
&\leq \frac{C}{\lambda^{N-2s}}\int_{\partial''\mathcal B^{+}_{\rho}}t^{1-2s}\sum\limits_{i=1}^k\frac{1}{(1+|y-x_{i}|)^{N-2s+1}}\times\sum\limits_{j=1}^k\frac{1}{(1+|y-x_{j}|)^{N-2s}}\\
&\leq\frac{Ck^2}{\lambda^{N-2s}}\int_{\partial''\mathcal B^{+}_{\rho}}\frac{t^{1-2s}}{(1+|y-x_{1}|)^{2N-4s+1}}\\
&\leq \frac{Ck^2}{\lambda^{N-2s}}.
\end{aligned}
\end{eqnarray}
It follows from \eqref{B.4.2} that
\begin{equation}\label{aum14}
\begin{aligned}
\int_{\partial''\mathcal B^{+}_{\rho}}t^{1-2s}|\widetilde{\phi}|^2\leq \frac{Ck\|\phi\|_*^2}{\lambda^{\tau}}.
\end{aligned}
\end{equation}
By \eqref{aum14} and Lemma \ref{le:lemma appendixB.4}, one has
\begin{equation*}
\begin{aligned}
&\quad|\int_{\partial''\mathcal B^{+}_{\rho}}t^{1-2s}\frac{\partial\widetilde{\phi}}{\partial\nu}\widetilde{\phi}dS|\\
&\leq(\int_{\partial''\mathcal B^{+}_{\rho}}t^{1-2s}|\nabla\widetilde{\phi}|^2)^{\frac{1}{2}}(\int_{\partial''\mathcal B^{+}_{\rho}}t^{1-2s}\widetilde{\phi}^2)^{\frac{1}{2}}\\
&\leq \frac{Ck\|\phi\|_*^2}{\lambda^{\tau}}.
\end{aligned}
\end{equation*}
Similarly, we can get $$|\int_{\partial''\mathcal B^{+}_{\rho}}t^{1-2s}\frac{\partial\widetilde{Z}_{r'',\overline{y}'',\lambda}}{\partial\nu} \widetilde{\phi}+\int_{\partial''\mathcal B^{+}_{\rho}}t^{1-2s}\frac{\partial\widetilde{\phi}}{\partial\nu} \widetilde{Z}_{r'',\overline{y}'',\lambda}|\leq \frac{Ck\|\phi\|_*^2}{\lambda^{\tau}}.$$

We have proved that $$|\int_{\partial''\mathcal B^{+}_{\rho}}t^{1-2s}\frac{\partial\tilde{u}_k}{\partial\nu} \tilde{u}_k|\leq\frac{Ck}{\lambda^{2s+\sigma}}.$$

Since $\zeta=0$ on $\partial B_{\rho}$, $u_k=\phi$ on $\partial B_{\rho}$. We deduce that
\begin{equation*}
\begin{aligned}
|\int_{\partial B_{\rho}}V(r,y'')u_k^2\langle y,\nu\rangle|
&\leq C\|\phi\|_*^2\int_{\partial B_{\rho}}\big(\sum\limits_{j=1}^k\frac{\lambda^{\frac{N-2s}{2}}}{(1+\lambda|y-x_j|)^{\frac{N-2s}{2}+\tau}}\big)^2\\
&\leq \frac{Ck^2\|\phi\|_*^2}{\lambda^{2\tau}}\leq \frac{Ck}{\lambda^{2s+\tau}}
\end{aligned}
\end{equation*}
and
\begin{equation*}
\begin{aligned}
|\int_{\partial B_{\rho}}(u_k)_+^{2^*_{s}}\langle y,\nu\rangle|\leq\frac{Ck^{2^*_s}\|\phi\|_*^{2^*_s}}{\lambda^{2^*_s\tau}}\leq\frac{Ck}{\lambda^{2s+\tau}}.
\end{aligned}
\end{equation*}

From Proposition \ref{proposition2}, we know the following estimates for $c_l$
\begin{equation}
\begin{aligned}
|c_l|\leq C(\frac{1}{\lambda})^{s+\sigma}.
\end{aligned}
\end{equation}
On the other hand,
\begin{equation}
\begin{aligned}
&\sum\limits_{j=1}^k\int_{B_{\rho}}
Z_{x_j,\lambda}^{2^*_s-2}Z_{j,l}Z_{\overline{r},\overline{y}'',\lambda}\\
=&\sum\limits_{j=1}^k\int_{B_{\rho}}
Z_{x_j,\lambda}^{2^*_s-1}Z_{j,l}+\sum\limits_{j=1}^k\int_{B_{\rho}}
\sum\limits_{i\neq j}Z_{x_j,\lambda}^{2^*_s-2}Z_{j,l}Z_{x_i,\lambda}\\
=&O(\frac{1}{\lambda^N})+O(\frac{k}{\lambda^{2s}}).
\end{aligned}
\end{equation}
These imply that
\begin{equation}
\begin{aligned}
&|\sum\limits_{l=1}c_l\sum\limits_{j=1}^k\int_{B_{\rho}}
Z_{x_j,\lambda}^{2^*_s-2}Z_{j,l}Z_{\overline{r},\overline{y}'',\lambda}|\leq\frac{Ck}{\lambda^{2s+\sigma}}.
\end{aligned}
\end{equation}

Combining the above estimates, we find that \eqref{aum12} is equivalent to
\begin{equation*}
\begin{aligned}
\int_{B_{\rho}}\big(sV(r,y'')+\frac{1}{2}\langle\nabla V(r,y''),y\rangle\big)u_k^2=O(\frac{k}{\lambda^{2s+\sigma}}).
\end{aligned}
\end{equation*}

\end{proof}

\begin{lemma}\label{gry}
For any function $g(r,y'') \in C^1(\mathbb{R}^N)$, it holds
\[
\ds \int_{B_\rho}g(r,y'')u_k^2=k\left(\frac{1}{\la^{2s}}g(\bar{r},\bar{y}'')\ds \int_{ \mathbb{R}^N}U_{0,1}^2+o\big(\frac{1}{\la^{2s}}\big)\right).
\]
\end{lemma}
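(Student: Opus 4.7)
The plan is to expand $u_k = Z_{\overline{r},\overline{y}'',\lambda} + \phi$ so that
\[
u_k^2 = Z_{\overline{r},\overline{y}'',\lambda}^2 + 2 Z_{\overline{r},\overline{y}'',\lambda}\phi + \phi^2,
\]
isolate the leading $k\lambda^{-2s}$ contribution coming from the diagonal piece $\sum_i Z_{x_i,\lambda}^2$ of $Z_{\overline{r},\overline{y}'',\lambda}^2$, and absorb the remaining off-diagonal, cross, and $\phi^2$ pieces into the $o(k/\lambda^{2s})$ error.

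For the diagonal contribution, the rotational symmetry of $g(r,y'')$ and of $\{x_j\}$ gives
\[
\sum_{i=1}^k \int_{B_\rho} g(r,y'') Z_{x_i,\lambda}^2\,dy = k \int_{B_\rho} g(r,y'') Z_{x_1,\lambda}^2\,dy.
\]
I would then rescale $y = x_1 + z/\lambda$, using the identity $U_{x_1,\lambda}(y) = \lambda^{(N-2s)/2} U_{0,1}(z)$, to rewrite the right-hand integral as $\lambda^{-2s}\int g(|y'|,y'')\zeta(y)^2 U_{0,1}(z)^2\,dz$ taken over $\lambda(B_\rho - x_1)$. Because $(\overline{r},\overline{y}'')$ lies within $\theta$ of $(r_0,y''_0)$ and $\zeta \equiv 1$ on a $\delta$-ball around $(r_0,y''_0)$, for each fixed $z$ one has $\zeta(x_1 + z/\lambda) \to 1$ and, by continuity of $g$ together with $|x_1'| = \overline{r}$, $g(|y'|,y'') \to g(\overline{r},\overline{y}'')$. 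Since $U_{0,1}^2$ is integrable on $\mathbb{R}^N$ under the hypothesis $N > 4s + 2\tau > 4s$, dominated convergence delivers the claimed main term $\lambda^{-2s}g(\overline{r},\overline{y}'')\int_{\mathbb{R}^N} U_{0,1}^2 + o(\lambda^{-2s})$.

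The remaining pieces must be shown to be $o(k/\lambda^{2s})$. For the off-diagonal terms $\int Z_{x_i,\lambda} Z_{x_j,\lambda}$ with $i \neq j$, I would invoke standard bubble interaction estimates of the form $\int U_{x_i,\lambda} U_{x_j,\lambda} \lesssim \lambda^{-2s}(\lambda|x_i-x_j|)^{-(N-2s-\varepsilon)}$; coupled with $|x_i - x_j| \gtrsim |i-j|/k$ and the scaling $\lambda \sim k^{(N-2s)/(N-4s)}$, summing over $i \neq j$ yields the required $o(k/\lambda^{2s})$ bound. For $\int g Z_{\overline{r},\overline{y}'',\lambda}\phi$ and $\int g\phi^2$, I would use the pointwise inequality
\[
|\phi(y)| \leq \|\phi\|_*\,\lambda^{(N-2s)/2}\sum_{j=1}^k (1+\lambda|y-x_j|)^{-(N-2s)/2-\tau}
\]
together with $\|\phi\|_* \leq C\lambda^{-(s+\sigma)}$ from Proposition \ref{proposition2}, and the convolution-type bounds of Lemmas \ref{Lemma appendix1}--\ref{Lemma appendix2}, to obtain $O(k\lambda^{-2s-\sigma})$ and $O(k\lambda^{-2s-2\sigma})$ respectively. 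The main delicate point is controlling the off-diagonal bubble sum against the tight $k$--$\lambda$ balance; the hypothesis $N > 4s + 2\tau$ is exactly what the appendix estimates require to deliver this.
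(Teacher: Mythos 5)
Your decomposition $u_k^2 = Z_{\overline r,\overline y'',\lambda}^2 + 2Z_{\overline r,\overline y'',\lambda}\phi + \phi^2$, the rescaling $y = x_1 + z/\lambda$ plus dominated convergence for the diagonal piece, and the $\|\phi\|_*$-based estimates for the cross and $\phi^2$ pieces are exactly what the paper does, so this is essentially the same proof. One small but worth-correcting point: your stated bubble-interaction bound $\int U_{x_i,\lambda}U_{x_j,\lambda}\lesssim \lambda^{-2s}(\lambda|x_i-x_j|)^{-(N-2s-\varepsilon)}$ is too strong. For $N>4s$ the sharp decay is $\int U_{x_i,\lambda}U_{x_j,\lambda}\sim \lambda^{-2s}(\lambda|x_i-x_j|)^{-(N-4s)}$ (equivalently, Lemma \ref{Lemma appendix1} only allows an exponent $\gamma<N-4s$ if the resulting integral is to converge); with $|x_1-x_j|\gtrsim \overline r\,|j-1|/k$ and $\lambda\sim k^{(N-2s)/(N-4s)}$ one still gets $\sum_{j\neq 1}\int U_{x_1,\lambda}U_{x_j,\lambda}=o(\lambda^{-2s})$, so the conclusion is unaffected, but the intermediate estimate as you wrote it is false.
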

\begin{proof}
We have
\[
\ds \int_{B_\rho}g(r,y'')u_k^2=\ds \int_{ D_\rho}g(r,y'')Z_{\bar{r},\bar{y}'',\la}^2+2\ds \int_{ D_\rho}g(r,y'')Z_{\bar{r},\bar{y}'',\la}\phi+\ds \int_{ D_\rho}g(r,y'')\phi^2.
\]

Note that
\begin{equation*}
\begin{aligned}
&\quad\big|2\int_{B_\rho}g(r,y'')Z_{\bar{r},\bar{y}'',\la}\phi+\int_{B_\rho}g(r,y'')\phi^2\big|\\
&\leq C\big(\|\phi\|_*\int_{B_\rho}\sum\limits_{i=1}^k\frac{\zeta\lambda^{N-2s}}{(1+\lambda|y-x_i|)^{N-2s}}\sum\limits_{j=1}^k\frac{1}{(1+\lambda|y-x_j|)^{\frac{N-2s}{2}+\tau}}\\
&\quad\quad\quad+\|\phi\|_*^2\int_{B_\rho}\big(\sum\limits_{i=1}^k\frac{\lambda^{\frac{N-2s}{2}}}{(1+\lambda|y-x_i|)^{\frac{N-2s}{2}+\tau}}\big)^2\big)\\
&\leq\frac{Ck\|\phi\|_*}{\lambda^{s}}+\frac{Ck\|\phi\|_*^2}{\lambda^{2\tau}}\leq\frac{Ck}{\lambda^{2s+\sigma}}
\end{aligned}
\end{equation*}
and
\begin{equation*}
\begin{aligned}
\int_{B_\rho}g(r,y'')Z_{\bar{r},\bar{y}'',\la}^2&=\sum\limits_{j=1}^k\left(\int_{B_\rho}g(r,y'')Z^2_{x_j,\la}+\sum\limits_{i\neq j}\int_{B_\rho}g(r,y'')Z_{x_i,\la}Z_{x_j,\la}\right)\\
&=k\left(\frac{1}{\lambda^{2s}}g(\bar{r},\bar{y}'')\int_{\mathbb{R}^N}U^2_{0,1}+o(\frac{1}{\lambda^{2s}})\right).
\end{aligned}
\end{equation*}
We get the result.

\end{proof}

{\bf Proof of Theorem \ref{th:1}.}  By \eqref{aum9} and \eqref{aum10}, we can deduce that
\begin{equation*}
\begin{aligned}
\int_{B_{\rho}}\big(sV(r,y'')+\frac{1}{2}r\frac{\partial V(r,y'')}{\partial r}\big)u_k^2=O(\frac{k}{\lambda^{2s+\sigma}}).
\end{aligned}
\end{equation*}
That is
\begin{equation}\label{aum18}
\begin{aligned}
\int_{B_{\rho}}\frac{1}{r^{2s-1}}\frac{\partial (r^{2s}V(r,y''))}{\partial r}u_k^2=O(\frac{k}{\lambda^{2s+\sigma}}).
\end{aligned}
\end{equation}
Applying Lemma \ref{gry} to \eqref{aum10} and \eqref{aum18}, we obtain
\begin{equation*}
\begin{aligned}
k\big(\frac{1}{\lambda^{2s}}\frac{\partial V(\overline{r},\overline{y}'')}{\partial \overline{y}_i}\int_{\mathbb{R}^N}U_{0,1}^2+o(\frac{1}{\lambda^{2s}})\big)=o(\frac{k}{\lambda^{2s}})
\end{aligned}
\end{equation*}
and
\begin{equation*}
\begin{aligned}
k\big(\frac{1}{\lambda^{2s}}\frac{1}{\overline{r}^{2s-1}}\frac{\partial (\overline{r}^{2s}V(\overline{r},\overline{y}''))}{\partial \overline{r}}\int_{\mathbb{R}^N}U_{0,1}^2+o(\frac{1}{\lambda^{2s}})\big)=o(\frac{k}{\lambda^{2s}}).
\end{aligned}
\end{equation*}
Therefore, the equations to determine $(\overline{r},\overline{y}'')$ are
\begin{equation}\label{aum21}
\begin{aligned}
\frac{\partial (\overline{r}^{2s}V(\overline{r},\overline{y}''))}{\partial \overline{y}_i}=o(1), \ \ i=3,\ldots,N,
\end{aligned}
\end{equation}
and
\begin{equation}\label{aum22}
\begin{aligned}
\frac{\partial (\overline{r}^{2s}V(\overline{r},\overline{y}''))}{\partial \overline{r}}=o(1).
\end{aligned}
\end{equation}

From \eqref{thirdpohozaevidentity} and \eqref{energy expansion1}, the equation to determine $\lambda$ is
\begin{equation}\label{aum19}
\begin{aligned}
-\frac{B_1}{\la^{2s+1}}V(\bar{r},\bar{y}'')+ \frac{B_3k^{N-2s}}{\la^{N-2s+1}}=O\big(\frac{1}{\la^{2s+1+\sigma}}\big).
\end{aligned}
\end{equation}
Let $\lambda=tk^{\frac{N-2s}{N-4s}}$, then $t\in[L_0,L_1]$. It follows from \eqref{aum19} that
\begin{equation}\label{aum20}
\begin{aligned}
-\frac{B_1}{t^{2s+1}}V(\bar{r},\bar{y}'')+ \frac{B_3}{t^{N-2s+1}}=o(1), \ \  t\in[L_0,L_1].
\end{aligned}
\end{equation}

Define $$H(t,\bar{r},\bar{y}'')=\big(\nabla_{\overline{r},\overline{y}''}(\overline{r}^{2s}V(\overline{r},\overline{y}'')),
-\frac{B_1}{t^{2s+1}}V(\bar{r},\bar{y}'')+ \frac{B_3}{t^{N-2s+1}}\big).$$
Then
$$deg\big(H(t,\bar{r},\bar{y}''),[L_0,L_1]\times B_\theta((r_0,y''_0))\big)=-deg\big(\nabla_{\overline{r},\overline{y}''}(\overline{r}^{2s}V(\overline{r},\overline{y}'')), B_\theta((r_0,y''_0)\big)\neq0.$$
Hence, \eqref{aum21}, \eqref{aum21} and \eqref{aum20} have a solution $t_k\in[L_0,L_1]$ and $(\overline{r}_k,\overline{y}''_k)\in B_\theta((r_0,y''_0))$. \hfill{$\Box$}


\begin{appendices}

\section{Some estimates}

In this section, we give some essential estimates.

For $x_{i},x_{j},y\in {\Bbb R}^{N}$, define
$g_{ij}(y)=\frac{1}{(1+|y-x_{i}|)^{\alpha}(1+|y-x_{j}|)^{\beta}},$
where $x_{i}\neq x_{j},$  $\alpha>0$ and $\beta>0$ are two constants.
\begin{lemma}\label{Lemma appendix1}
For any constant $\gamma\in(0,\min(\alpha,\beta)]$, we have
$$g_{ij}(y)\leq\frac{C}{(1+|x_{i}-x_{j}|)^{\gamma}}\Big(\frac{1}{(1+|y-x_{i}|)^{\alpha+\beta-\gamma}}
+\frac{1}{(1+|y-x_{j}|)^{\alpha+\beta-\gamma}}\Big).$$
\end{lemma}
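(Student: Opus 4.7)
The inequality is a standard ``interpolation'' estimate of the kind that recurs throughout finite-dimensional reduction arguments, and the plan is to split into two symmetric cases according to which of $|y-x_i|$, $|y-x_j|$ is larger. Without loss of generality I first treat the case $|y-x_i|\le |y-x_j|$, which should produce the term with $(1+|y-x_i|)^{\alpha+\beta-\gamma}$ on the right; the other case follows by the obvious relabelling $i\leftrightarrow j$.

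In the case $|y-x_i|\le |y-x_j|$, the triangle inequality gives $|x_i-x_j|\le |y-x_i|+|y-x_j|\le 2|y-x_j|$, so that
\[
1+|y-x_j|\;\ge\;\tfrac{1}{2}\bigl(1+|x_i-x_j|\bigr).
\]
The idea is then to peel off a factor of $(1+|y-x_j|)^{-\gamma}$ from $(1+|y-x_j|)^{-\beta}$ and use the above bound to convert it into the desired $(1+|x_i-x_j|)^{-\gamma}$; the remaining factor $(1+|y-x_j|)^{-(\beta-\gamma)}$ is, since $\beta-\gamma\ge 0$ by hypothesis and $|y-x_j|\ge |y-x_i|$, bounded above by $(1+|y-x_i|)^{-(\beta-\gamma)}$. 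Combining these gives
\[
g_{ij}(y)\;\le\;\frac{C}{(1+|x_i-x_j|)^\gamma}\cdot\frac{1}{(1+|y-x_i|)^{\alpha+\beta-\gamma}},
\]
which is bounded by the right-hand side of the lemma. The symmetric case $|y-x_j|\le |y-x_i|$, using $\gamma\le\alpha$, yields the companion term with $(1+|y-x_j|)^{\alpha+\beta-\gamma}$. Adding the two bounds (each holds on its respective region, and the full right-hand side is a sum over both terms) proves the claim.

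I do not expect any serious obstacle: the only place where the assumption $\gamma\le\min(\alpha,\beta)$ is essential is in keeping both exponents $\alpha-\gamma$ and $\beta-\gamma$ nonnegative, so that the monotonicity step ``$(1+|y-x_j|)^{-(\beta-\gamma)}\le (1+|y-x_i|)^{-(\beta-\gamma)}$'' (and the symmetric one with $\alpha$) goes in the right direction. The constant $C$ depends only on absorbing the factor $2^\gamma$ coming from the triangle inequality step.
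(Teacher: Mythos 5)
Your proof is correct. The paper itself does not supply an argument here but simply cites Lemma A.1 of Wei--Yan \cite{wy}, and the two-case split you give (WLOG $|y-x_i|\le|y-x_j|$, peel $(1+|y-x_j|)^{-\gamma}$ off the $\beta$-factor via $2|y-x_j|\ge|x_i-x_j|$, and push the leftover $(1+|y-x_j|)^{-(\beta-\gamma)}$ onto $(1+|y-x_i|)^{-(\beta-\gamma)}$ using $\beta\ge\gamma$) is exactly the standard argument in that reference.
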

\begin{proof}
See the proof of Lemma A.1 in \cite{{wy}}.
\end{proof}

\begin{lemma}\label{Lemma appendix2}
For any constant $0<\vartheta< N-2s$, there is a constant $C>0$, such that
$$\int_{{\Bbb R}^{N}}\frac{1}{|y-z|^{N-2s}}\frac{1}{(1+|z|)^{2s+\vartheta}}dz\leq \frac{C}{(1+|y|)^{\vartheta}}.$$
\end{lemma}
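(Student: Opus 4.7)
\textbf{Proof plan for Lemma \ref{Lemma appendix2}.}
The plan is to show the convolution estimate by a standard splitting of the domain of integration based on the relative sizes of $|z|$, $|y-z|$ and $|y|$. First I would dispose of the bounded case $|y|\leq 1$: on this set the target bound $\frac{C}{(1+|y|)^\vartheta}$ is just a universal constant, so it suffices to check that $\int_{\mathbb{R}^N}\frac{1}{|y-z|^{N-2s}(1+|z|)^{2s+\vartheta}}dz$ is bounded uniformly for $|y|\leq 1$. The singularity $|y-z|^{-(N-2s)}$ is integrable near $z=y$ (since $N-2s<N$), and the decay $(1+|z|)^{-(2s+\vartheta)}$ combined with $|y-z|^{-(N-2s)}\leq C|z|^{-(N-2s)}$ for $|z|$ large gives convergence at infinity because $(N-2s)+(2s+\vartheta)=N+\vartheta>N$.

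Next, for $|y|\geq 1$, I would split $\mathbb{R}^N$ into three regions:
\begin{equation*}
\Omega_1=\{|z|\leq |y|/2\},\quad \Omega_2=\{|y-z|\leq |y|/2\},\quad \Omega_3=\mathbb{R}^N\setminus(\Omega_1\cup\Omega_2).
\end{equation*}
On $\Omega_1$, $|y-z|\geq |y|/2$, so $|y-z|^{-(N-2s)}\leq C|y|^{-(N-2s)}$, and one computes $\int_{\Omega_1}(1+|z|)^{-(2s+\vartheta)}dz\leq C|y|^{N-2s-\vartheta}$ when $2s+\vartheta<N$ (which follows from $\vartheta<N-2s$), giving the contribution $C|y|^{-\vartheta}$; in the borderline sub-cases the bound is even better. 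On $\Omega_2$, $|z|\geq |y|/2$, hence $(1+|z|)^{-(2s+\vartheta)}\leq C|y|^{-(2s+\vartheta)}$, and the local integral $\int_{|y-z|\leq |y|/2}|y-z|^{-(N-2s)}dz=C|y|^{2s}$ yields exactly $C|y|^{-\vartheta}$.

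The region $\Omega_3$, where simultaneously $|z|\geq |y|/2$ and $|y-z|\geq |y|/2$, will require a little more care: I would split it further according to whether $|z|\leq 2|y|$ or $|z|>2|y|$. In the annulus $|y|/2\leq|z|\leq 2|y|$, we have $|z|\sim|y|$, so $(1+|z|)^{-(2s+\vartheta)}\leq C|y|^{-(2s+\vartheta)}$ and the remaining integral $\int_{|y-z|\geq|y|/2,\,|z|\leq 2|y|}|y-z|^{-(N-2s)}dz\leq C|y|^{2s}$, again giving $C|y|^{-\vartheta}$. In the tail $|z|>2|y|$ one has $|y-z|\geq|z|/2$, so the integrand is bounded by $C|z|^{-(N+\vartheta)}$, and $\int_{|z|>2|y|}|z|^{-(N+\vartheta)}dz\leq C|y|^{-\vartheta}$, as required.

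The computation is essentially bookkeeping; the only point requiring minor vigilance is to ensure that every auxiliary integral converges, and this is exactly what the hypothesis $0<\vartheta<N-2s$ provides (it gives $2s+\vartheta<N$ in $\Omega_1$, positivity of the exponent $N+\vartheta>N$ in the tail, and integrability of $|y-z|^{-(N-2s)}$ on bounded sets through $N-2s<N$). Combining the three regional bounds and the bounded case $|y|\leq 1$ yields the claimed estimate.
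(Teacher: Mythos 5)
Your proof is correct, and the decomposition into $\{|z|\le|y|/2\}$, $\{|y-z|\le|y|/2\}$, and their complement (further split at $|z|=2|y|$) together with the disposal of the bounded range $|y|\le1$ is the standard argument for estimates of this Riesz-convolution type. The paper itself does not supply a proof but only refers to Lemma~2.1 of \cite{Guo2016}; the argument there is of the same nature, so your proposal is in substance the expected one and closes a gap the paper leaves to a citation.
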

\begin{proof}
See the proof of Lemma 2.1 in \cite{Guo2016}.
\end{proof}

\begin{lemma}\label{Lemma appendixaA30}
Let $\mu>0$. For any constants $0<\beta<N$, there exists a constant $C>0$ independent of $\mu$, such that
$$\int_{\mathbb{R}^N\setminus B_\mu(y)}\frac{1}{|y-z|^{N+2s}}\frac{1}{(1+|z|)^{\beta}}dz\leq C\big(\frac{1}{(1+|y|)^{\beta+2s}}+\frac{1}{\mu^{2s}}\frac{1}{(1+|y|)^{\beta}}\big).$$
\end{lemma}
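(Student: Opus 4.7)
The plan is to translate the integrand via the substitution $w=z-y$ and then split the domain of integration according to the size of $|w|$ relative to both the inner radius $\mu$ and the natural scale $|y|$. After the substitution the integral becomes
\[
I=\int_{|w|>\mu}\frac{dw}{|w|^{N+2s}(1+|y+w|)^\beta},
\]
and I would decompose $\{|w|>\mu\}$ into an \emph{inner} piece $\{\mu<|w|\le |y|/2\}$ (empty unless $\mu<|y|/2$), an \emph{intermediate} piece $\{\max(\mu,|y|/2)<|w|\le 2(1+|y|)\}$, and a \emph{far} piece $\{|w|>2(1+|y|)\}$.

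On the inner piece the triangle inequality yields $|y+w|\ge |y|/2$, so $(1+|y+w|)^{-\beta}\le C(1+|y|)^{-\beta}$ pulls out of the integral and the remaining radial integral $\int_\mu^{|y|/2} r^{-1-2s}\,dr\le C\mu^{-2s}$ produces the contribution $C\mu^{-2s}(1+|y|)^{-\beta}$, matching the second term in the claimed bound. On the far piece $|y+w|\ge |w|/2$ gives $(1+|y+w|)^{-\beta}\le C(1+|w|)^{-\beta}$, and a direct radial integral in $|w|$ yields at most $C(1+|y|)^{-\beta-2s}$. On the intermediate annulus $|w|$ is comparable to $1+|y|$, so $|w|^{-N-2s}\le C(1+|y|)^{-N-2s}$; changing variables $v=y+w$ reduces the remaining factor to $\int_{|v|\le 3(1+|y|)}(1+|v|)^{-\beta}\,dv\le C(1+|y|)^{N-\beta}$, so that this piece is also controlled by $C(1+|y|)^{-\beta-2s}$.

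Summing the three contributions gives the stated inequality. The only genuine use of hypothesis is in the intermediate annulus, where the bound $\int_{|v|\le R}(1+|v|)^{-\beta}\,dv\le CR^{N-\beta}$ requires $\beta<N$; the rest of the proof is a routine case analysis according to whether $\mu\le |y|/2$ or $\mu>|y|/2$, together with elementary radial integrals of $r^{-1-2s}$ and $r^{-1-2s-\beta}$. I expect the main bookkeeping obstacle to be ensuring that the constants produced in the three regions remain independent of both $\mu$ and $y$, which is why it is convenient to split at the scale $2(1+|y|)$ rather than at $|y|$ (so that the small-$|y|$ regime is absorbed into the $1$).
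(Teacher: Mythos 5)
Your decomposition is essentially a translated version of the paper's: the paper sets $d=|y|/2$, assumes WLOG $|y|\ge 2$, and splits $\mathbb{R}^N\setminus B_\mu(y)$ into $B_d(0)$, $B_d(y)\setminus B_\mu(y)$, and the remainder; your inner piece is the paper's $B_d(y)\setminus B_\mu(y)$, and your intermediate and far pieces together cover the rest. Your inner and far estimates are correct. However, there is a genuine gap in the intermediate piece: the claim that $|w|$ is comparable to $1+|y|$ there requires the lower endpoint $\max(\mu,|y|/2)$ to be bounded below by a fixed multiple of $1+|y|$, and this fails when $|y|$ and $\mu$ are both small. Concretely, for $|y|=\mu=\varepsilon\to 0$ the range of $|w|$ on the intermediate annulus is roughly $(\varepsilon,2]$, the integral there is $\approx C\mu^{-2s}\to\infty$, and this is not $O\big((1+|y|)^{-\beta-2s}\big)=O(1)$ as you assert — it must instead be charged to the $\mu^{-2s}(1+|y|)^{-\beta}$ term. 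The case split you propose ($\mu\le|y|/2$ versus $\mu>|y|/2$) does not isolate this regime, since both small values satisfy $\mu>|y|/2$.

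The fix is the same device the paper uses: first dispose of $|y|<2$ by the crude bound $\int_{|w|>\mu}|w|^{-N-2s}\,dw=C\mu^{-2s}\le C'\mu^{-2s}(1+|y|)^{-\beta}$ (since $(1+|y|)^{-\beta}\ge 3^{-\beta}$ there), and then run your three-region argument only for $|y|\ge 2$, where $\max(\mu,|y|/2)\ge(1+|y|)/3$ does make $|w|\sim 1+|y|$ on the intermediate annulus. Alternatively, replace the intermediate estimate by the uniform bound $(1+|y+w|)^{-\beta}\le 1$ together with $\int_{\max(\mu,|y|/2)}^{2(1+|y|)}r^{-1-2s}\,dr\le C\mu^{-2s}$ when $|y|<2$. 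Either repair closes the gap; everything else in your argument is sound and parallels the paper's proof.
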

\begin{proof} Without loss of generality, we set $|y|\geq2$, and
let $d=\frac{|y|}{2}.$
Then, we have
\begin{equation*}
\begin{aligned}
&\int_{\mathbb{R}^N\setminus B_\mu(y)}\frac{1}{|y-z|^{N+2s}}\frac{1}{(1+|z|)^{\beta}}dz\\
\leq& \int_{B_{d}(0)}+\int_{B_{d}(y)\setminus B_\mu(y)}+\int_{\mathbb{R}^{N}\setminus\big(B_{d}(0)\cup B_{d}(y)\big)}\frac{1}{|y-z|^{N+2s}}\frac{1}{(1+|z|)^{\beta}}dz.
\end{aligned}
\end{equation*}

By direct computation, we have
$$\int_{B_{d}(0)}\frac{dz}{|y-z|^{N+2s}(1+|z|)^{\beta}}\leq\frac{C}{d^{N+2s}}\int_{0}^{d}\frac
{r^{N-1}dr}{(1+r)^{\beta}}\leq\frac{C}{d^{\beta+2s}},$$
and
$$\int_{B_{d}(y)\setminus B_\mu(y)}\frac{dz}{|y-z|^{N+2s}(1+|z|)^{\beta}}\leq\frac{C}{d^{\beta}}\int_{B_{d}(y)\setminus B_\mu(y)}\frac{dz}
{|y-z|^{N+2s}}\leq\frac{C}{\mu^{2s}d^{\beta}}.$$

For $z\in\mathbb{R}^{N}\setminus\big(B_{d}(0)\cup B_{d}(y)\big)$, we
have $|y-z|\geq\frac{|y|}{2}$ and $|z|\geq\frac{|y|}{2}.$  If $|z|\geq 2|y|$, then $|y-z|\geq|z|-|y|\geq\frac{|z|}{2}$, and if $|z|<2|y|$, then $|y-z|\geq\frac{|y|}{2}>\frac{|z|}{4}$.
Thus, we have
\begin{equation*}
\begin{aligned}
\displaystyle\int_{\mathbb{R}^{N}\setminus\big(B_{d}(0)\cup
B_{d}(y)\big)}\frac{dz}{|y-z|^{N+2s}(1+|z|)^{\beta}} &\leq
C\displaystyle\int_{\mathbb{R}^{N}\setminus B_{d}(0)}\frac{dz}{(1+|z|)^{\beta}|z|^{N+2s}}\\
&\leq\frac{C}{d^{\beta+2s}}.
\end{aligned}
\end{equation*}

\end{proof}

\begin{lemma}\label{Lemma appendixa}
Let $\rho>0$. Suppose that $(y-x)^2+t^2=\rho^2$, $t>0$ and $\alpha>N$ . Then, when $0<\beta<N$, we have
\begin{equation}\label{lemmaA3 1}
\int_{\mathbb{R}^N}\frac{1}{(t+|z|)^{\alpha}} \frac{1}{|y-z-x|^{\beta}}dz\leq C\big(\frac{1}{(1+|y-x|)^{\beta}}\frac{1}{t^{\alpha-N}}+\frac{1}{(1+|y-x|)^{\alpha+\beta-N}}\big).
\end{equation}
\end{lemma}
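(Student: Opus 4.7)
The plan is to estimate by region decomposition, in the same spirit as the proofs of Lemmas \ref{Lemma appendix2} and \ref{Lemma appendixaA30}. Set $w = y - x$, so the integral reads $I = \int_{\mathbb{R}^N} \frac{dz}{(t + |z|)^\alpha |w - z|^\beta}$, subject to $|w|^2 + t^2 = \rho^2$. I would split $\mathbb{R}^N$ into $\Omega_1 = \{|z| \leq |w|/2\}$, on which $|w-z| \geq |w|/2$; $\Omega_2 = \{|w-z| \leq |w|/2\}$, on which $|z| \geq |w|/2$ and hence $t + |z| \geq t + |w|/2$; and $\Omega_3 = \mathbb{R}^N \setminus (\Omega_1 \cup \Omega_2)$, which I then further split at $|z| = 2|w|$ into a bounded shell and a far tail on which $|w - z| \geq |z|/2$.

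On $\Omega_1$, pulling $|w-z|^{-\beta} \leq (2/|w|)^\beta$ outside and using the convergence $\int_0^\infty s^{N-1}(t+s)^{-\alpha}\,ds = C t^{N-\alpha}$ (valid since $\alpha > N$) yields a contribution of order $|w|^{-\beta} t^{N-\alpha}$. On $\Omega_2$, bounding $(t+|z|)^{-\alpha} \leq (t + |w|/2)^{-\alpha}$ and integrating the radial singularity via $\int_0^{|w|/2} s^{N-1-\beta}\,ds = C|w|^{N-\beta}$ (valid since $\beta < N$) yields order $|w|^{N-\beta}(t+|w|)^{-\alpha}$, which is at most $C(t+|w|)^{N-\alpha-\beta}$. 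On the bounded piece of $\Omega_3$ the same bound follows from the pointwise estimate times the volume $C|w|^N$. On the far piece $|z| > 2|w|$, the reduction to $\int_{2|w|}^\infty s^{N-1-\beta}(t+s)^{-\alpha}\,ds$ and splitting the radial variable at $s = t$ yields a bound by $C \max(|w|, t)^{N-\alpha-\beta}$, using $\alpha + \beta > N$.

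Summing all contributions gives $I \leq C\bigl(|w|^{-\beta} t^{N-\alpha} + (t + |w|)^{N-\alpha-\beta}\bigr)$. To cast this in the stated form, I would invoke the spherical constraint $|w|^2 + t^2 = \rho^2$, which forces $\max(|w|, t) \geq \rho/\sqrt{2}$ and hence $t + |w| \geq \rho/\sqrt{2}$; this lets me replace $t + |w|$ by $1 + |w|$ at the cost of absorbing a $\rho$-dependent factor into $C$. The main obstacle I anticipate is the first summand $|w|^{-\beta} t^{N-\alpha}$ when $|w|$ is small: here the crude estimate on $\Omega_1$ has to be replaced with the volume bound $|w|^{N-\beta}/t^\alpha$ and one must exploit that small $|w|$ forces $t \geq \rho/\sqrt{2}$ (via the sphere constraint), so that $|w|^{N-\beta}/t^\alpha \leq C_\rho (1+|w|)^{-\beta} t^{N-\alpha}$. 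The resulting case analysis ($|w| \geq 1$ versus $|w| < 1$, and $t \leq |w|$ versus $t > |w|$) is the only nontrivial bookkeeping in the argument.
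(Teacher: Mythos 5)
Your region decomposition (near the origin, near the singularity at $w=y-x$, and the far field split at $|z|=2|w|$) is the standard technique for this kind of two-singularity convolution estimate, and the argument is essentially correct. The paper supplies no proof of its own for this lemma (it simply cites Lemma~A.3 of \cite{GNNT}), so there is nothing internal to compare against; your write-up fills that gap with a self-contained argument. The one place to be careful is the final bookkeeping. The assertion that ``small $|w|$ forces $t\geq\rho/\sqrt{2}$'' is correct only for $|w|<\rho/\sqrt{2}$, and the auxiliary threshold $|w|=1$ is both unnecessary and potentially misleading: for $\rho\leq 1$ one always has $|w|\leq\rho\leq 1$, yet $t$ can still tend to $0$, so the dichotomy $|w|\gtrless 1$ does not by itself force $t$ away from zero. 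The clean split, which you already mention at the end, is simply $|w|\geq t$ versus $|w|<t$. When $|w|\leq t$ the constraint $|w|^2+t^2=\rho^2$ forces $t\geq\rho/\sqrt{2}$ and $|w|\leq\rho$, so the volume bound $|w|^{N-\beta}t^{-\alpha}$ is controlled by a $\rho$-dependent constant, which is in turn dominated by $(1+|w|)^{N-\alpha-\beta}$ since $|w|\leq\rho$ and the exponent is negative. When $|w|>t$ the constraint forces $|w|\geq\rho/\sqrt{2}$, hence $|w|^{-\beta}\leq C_\rho(1+|w|)^{-\beta}$ and the crude bound $|w|^{-\beta}t^{N-\alpha}$ already has the desired form. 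The same use of $\max(|w|,t)\geq\rho/\sqrt{2}$ and $|w|\leq\rho$ handles the far-tail contribution $\max(|w|,t)^{N-\alpha-\beta}\leq C_\rho(1+|w|)^{N-\alpha-\beta}$. With that single adjustment (drop the $|w|\gtrless 1$ dichotomy and keep only $t\gtrless|w|$) the argument closes as you describe.
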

\begin{proof}
The proof is same to that of Lemma A.3 in \cite{GNNT}.
\end{proof}

\begin{lemma}\label{estimate for Z}
Suppose that $(y-x)^2+t^2=\rho^2$. Then there exists a constant $C>0$ such that
\begin{equation}
\begin{aligned}
|\widetilde{Z}_{x_{i},\lambda}|\leq\frac{C}{\lambda^{\frac{N-2s}{2}}}\frac{1}{(1+|y-x_{i}|)^{N-2s}} \ \    \hbox{and}   \ \ |\nabla \tilde{Z}_{x_{i},\lambda}|\leq\frac{C}{\lambda^{\frac{N-2s}{2}}}\frac{1}{(1+|y-x_{i}|)^{N-2s+1}}.
\end{aligned}
\end{equation}
\end{lemma}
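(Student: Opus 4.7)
The plan is to reduce the claim to a pointwise decay estimate for the Poisson extension of the standard bubble $U_{0,1}$, exploiting monotonicity of the Poisson extension together with the scaling symmetry of $U_{x_i,\lambda}$. Since $0\le Z_{x_i,\lambda}=\zeta U_{x_i,\lambda}\le U_{x_i,\lambda}$ and the Poisson kernel $P_s$ is non-negative, monotonicity gives $|\widetilde Z_{x_i,\lambda}|\le \widetilde U_{x_i,\lambda}$; differentiating under the integral sign and taking absolute values yields the analogous inequality $|\nabla \widetilde Z_{x_i,\lambda}|\le \int |\nabla P_s(y-\xi,t)|\,U_{x_i,\lambda}(\xi)\,d\xi$. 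A direct change of variables $\xi=x_i+z/\lambda$ in the Poisson representation produces the scaling identity
\[
\widetilde U_{x_i,\lambda}(y,t)=\lambda^{(N-2s)/2}\,\widetilde U_{0,1}\bigl(\lambda(y-x_i),\lambda t\bigr),
\]
and the analogous gradient identity with one extra factor of $\lambda$, so both claimed bounds are reduced, after scaling, to pointwise decay of $\widetilde U_{0,1}$ and its gradient at the rescaled point $(W,S)=(\lambda(y-x_i),\lambda t)$.

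Next I would prove
\[
|\widetilde U_{0,1}(W,S)|\le \frac{C}{(1+R^2)^{(N-2s)/2}},\qquad |\nabla \widetilde U_{0,1}(W,S)|\le \frac{C}{(1+R^2)^{(N-2s+1)/2}},
\]
with $R=\sqrt{|W|^2+S^2}$, by splitting the Poisson integral at scale $|z|=R/4$. On the far region $\{|z|>R/4\}$, the trace bound $U_{0,1}(z)\le C/R^{N-2s}$ and the fact that $P_s(W-\cdot,S)$ integrates to $1$ give the required decay. On the near region $\{|z|\le R/4\}$, the elementary estimate $|W-z|^2+S^2\ge \tfrac12|W|^2+S^2-|z|^2\ge \tfrac{7}{16}R^2$ makes $(|W-z|^2+S^2)^{(N+2s)/2}$ comparable to $R^{N+2s}$, which combines with $\int_{|z|\le R/4}(1+|z|^2)^{-(N-2s)/2}\,dz=O(R^{2s})$ (using $N-2s<N$) to yield the bound on $|\widetilde U_{0,1}|$. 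The $\nabla_W$-component of the gradient follows identically upon using $|\nabla_W P_s(x,t)|\le C\,t^{2s}/(|x|^2+t^2)^{(N+2s+1)/2}$; for the $\partial_S$-component the direct estimate $|\partial_S P_s(x,S)|\le C\,S^{2s-1}/(|x|^2+S^2)^{(N+2s)/2}$ falls short by one factor of $R$ when $S\ll |W|$, and I would recover the missing power from interior regularity for the degenerate elliptic equation $\mathrm{div}(t^{1-2s}\nabla \widetilde U_{0,1})=0$ applied on a ball of radius $R/10$ around $(W,S)$ on which $\widetilde U_{0,1}\le C/R^{N-2s}$ is already controlled.

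Finally, combining the scaling identity with the decay estimates yields
\[
|\widetilde U_{x_i,\lambda}(y,t)|\le \frac{C}{\lambda^{(N-2s)/2}(|y-x_i|^2+t^2)^{(N-2s)/2}},
\]
and its gradient analogue. Under the hypothesis $(y-x)^2+t^2=\rho^2$ with $\rho\in(2\delta,5\delta)$, the configuration forces $(y,t)$ to remain at uniform positive distance from $(x_i,0)$ on $\partial''\mathcal B^+_\rho$: for $i=1$ because $|x_1-(r_0,y''_0)|\le\theta<\delta<\rho$, and for $i\ne 1$ because $|x_i-(r_0,y''_0)|$ is bounded below by a positive constant times $r_0$. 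Hence $|y-x_i|^2+t^2\ge c_0>0$, and since $|y-x_i|$ is also bounded above by a fixed constant, one obtains $(|y-x_i|^2+t^2)^{1/2}\ge c_1(1+|y-x_i|)$, converting the preceding estimate into the stated bounds. The main difficulty lies in the $\partial_S$-component of the gradient of $\widetilde U_{0,1}$, where the naive Poisson-kernel estimate is not sharp and the missing decay in $R$ must be recovered via interior regularity of the weighted elliptic equation, the rest of the argument being routine geometric and splitting estimates.
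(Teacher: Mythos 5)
Your first bound, $|\widetilde Z_{x_i,\lambda}|\leq \frac{C}{\lambda^{(N-2s)/2}}(1+|y-x_i|)^{-(N-2s)}$, is sound: monotonicity of the Poisson integral plus scaling of $U$ and the splitting of the integral at scale $R/4$ reproduce the decay estimate, which is what the paper obtains in one step from Lemma \ref{Lemma appendixa}. The gradient bound, however, has a genuine gap that is not repaired by the interior-regularity patch you propose, and in fact the first-order estimate you write down is already too weak before the $\partial_t$-term is even isolated.

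The inequality $|\nabla\widetilde Z_{x_i,\lambda}|\leq\int|\nabla P_s(y-\xi,t)|\,U_{x_i,\lambda}(\xi)\,d\xi$ throws away the cancellation in $\nabla P_s$, and the right-hand side blows up like $U/t$ as $t\to0^+$ for \emph{both} the tangential and the normal derivative: one has $\int|\nabla_y P_s(y-\xi,t)|\,d\xi\sim t^{-1}$ and $\int|\partial_t P_s(y-\xi,t)|\,d\xi\sim t^{-1}$, so no amount of splitting at scale $R/4$ can recover a bound uniform down to $t=0$. Since Lemma \ref{estimate for Z} is invoked on $\partial''\mathcal B^+_\rho$, which contains points with $t$ arbitrarily close to $0$, and the surface measure there carries the singular weight $t^{1-2s}$, a bound that degenerates as $t\to0$ is useless. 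Interior Schauder/De~Giorgi regularity for $\mathrm{div}(t^{1-2s}\nabla w)=0$ cannot fix this either: near $\{t=0\}$ the admissible ball has radius at most $t$, so it returns $|\nabla\widetilde U|\lesssim\|\widetilde U\|_\infty/t$, which again blows up; one would need boundary regularity for the degenerate extension problem, not interior regularity. Moreover, even if boundary regularity were established for $\widetilde U_{0,1}$, it bounds $\nabla\widetilde U$, not the non-cancelling quantity $\int|\nabla P_s|\,U$ that you reduced to, so the two halves of your argument do not connect.

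The paper avoids all of this with a single change of variables $\xi=y-tz$ \emph{before} differentiating. In the new variable the Poisson kernel becomes $(1+|z|^2)^{-(N+2s)/2}$, which is $t$-independent, and the only $(y,t)$-dependence sits in $\zeta(y-tz)(\lambda^{-2}+|y-tz-x_i|^2)^{-(N-2s)/2}$. Differentiating in either $y$ or $t$ then produces at worst one extra factor of $|z|$ in the integrand and, simultaneously, one extra power of decay in $(1+|y-tz-x_i|)$; after rescaling back one applies Lemma \ref{Lemma appendixa} with $(\alpha,\beta)=(N+2s-1,N-2s+1)$, where the residual factor $t^{\alpha-N}=t^{2s-1}$ is bounded as $t\to0$ precisely because $s>\tfrac12$. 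This gives the gradient estimate uniformly in $t\in[0,\rho]$, including the normal derivative, with no regularity theory for the degenerate equation. You should replace the $\int|\nabla P_s|\,U$ reduction and the interior-regularity appeal by this change-of-variables computation (or, for the tangential derivative alone, by integration by parts $\nabla_y\widetilde Z=\int P_s\,\nabla Z$, which also preserves the cancellation).
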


\begin{proof}
By Lemma \ref{Lemma appendixa}, we have
\begin{equation*}
\begin{aligned}
|\widetilde{Z}_{x_{i},\lambda}(y,t)|&=|\beta(N,s)\int_{\mathbb{R}^N}\frac{t^{2s}}{(|y-\xi|^2+t^2)^{\frac{N+2s}{2}}}\zeta(\xi)U_{x_{i},\lambda}(\xi)d\xi|\\
&=|\beta(N,s)C(N,s)\int_{\mathbb{R}^N}\frac{t^{2s}}{(|y-\xi|^2+t^2)^{\frac{N+2s}{2}}}\zeta(\xi)(\frac{\lambda}{1+\lambda^2|
\xi-x_{i}|^2})^{\frac{N-2s}{2}}d\xi|\\
&\leq \frac{C}{\lambda^{\frac{N-2s}{2}}}\int_{\mathbb{R}^N}\frac{1}{(1+|z|)^{N+2s}} \frac{1}{(\lambda^{-1}+|y-tz-x_{i}|)^{N-2s}}dz\\
&\leq \frac{C}{\lambda^{\frac{N-2s}{2}}}\int_{\mathbb{R}^N}\frac{t^{2s}}{(t+|z|)^{N+2s}} \frac{1}{(\lambda^{-1}+|y-z-x_{i}|)^{N-2s}}dz\\
&\leq\frac{C}{\lambda^{\frac{N-2s}{2}}}\frac{1}{(1+|y-x_{i}|)^{N-2s}}.
\end{aligned}
\end{equation*}

Note that for $l=1,\ldots,N$
\begin{equation*}
\begin{aligned}
&\quad \frac{\partial}{\partial y_l}\int_{\mathbb{R}^N}\frac{t^{2s}}{(|y-\xi|^2+t^2)^{\frac{N+2s}{2}}}\zeta(\xi)(\frac{\lambda}{1+\lambda^2|
\xi-x_{i}|^2})^{\frac{N-2s}{2}}d\xi\\
&=\frac{1}{\lambda^{\frac{N-2s}{2}}}\frac{\partial}{\partial y_l}\int_{\mathbb{R}^N}\frac{1}{(1+|z|^2)^{\frac{N+2s}{2}}}\zeta(y-tz)(\frac{1}
{\lambda^{-2}+|y-tz-x_{i}|^2})^{\frac{N-2s}{2}}dz\\
&=\frac{2s-N}{\lambda^{\frac{N-2s}{2}}}\int_{\mathbb{R}^N}\frac{1}{(1+|z|^2)^{\frac{N+2s}{2}}} \zeta(y-tz) \frac{(y-tz-x_{i})_l}{(\lambda^{-2}+|y-tz-x_{i}|^2)^{\frac{N-2s}{2}+1}}dz\\
&\quad+\frac{1}{\lambda^{\frac{N-2s}{2}}}\int_{\mathbb{R}^N}\frac{1}{(1+|z|^2)^{\frac{N+2s}{2}}}\frac{\partial\zeta(y-tz)}{\partial y_l} \frac{1}{(\lambda^{-2}+|y-tz-x_{i}|^2)^{\frac{N-2s}{2}}}dz,
\end{aligned}
\end{equation*}
and
\begin{equation*}
\begin{aligned}
&\quad \frac{\partial}{\partial t}\int_{\mathbb{R}^N}\frac{t^{2s}}{(|y-\xi|^2+t^2)^{\frac{N+2\sigma}{2}}}(\frac{\lambda}
{1+\lambda^2|\xi-x_{i}|^2})^{\frac{N-2s}{2}}d\xi\\
&=\frac{N-2s}{\lambda^{\frac{N-2s}{2}}}\int_{\mathbb{R}^N}\frac{1}{(1+|z|^2)^{\frac{N+2s}{2}}}\zeta(y-tz) \frac{\sum^{N}_{l=1}(y-tz-x_{k,L})_lz_l}{(\lambda^{-2}+|y-tz-x_{i}|^2)^{\frac{N-2s}{2}+1}}dz\\
&\quad+\frac{1}{\lambda^{\frac{N-2s}{2}}}\int_{\mathbb{R}^N}\frac{1}{(1+|z|^2)^{\frac{N+2s}{2}}} \frac{\nabla\zeta(y-tz)\cdot z}{(\lambda^{-2}+|y-tz-x_{i}|^2)^{\frac{N-2s}{2}}}dz.
\end{aligned}
\end{equation*}
Then, by the definition of $\zeta$ and \eqref{lemmaA3 1}, we have
\begin{equation}\label{nie1}
\begin{aligned}
|\nabla \tilde{Z}_{x_{i},\lambda}|&\leq \frac{C}{\lambda^{\frac{N-2s}{2}}}\int_{\mathbb{R}^N}\frac{1}{(1+|z|)^{N+2s-1}} \frac{1}{(1+|y-tz-x_{i}|)^{N-2s+1}}dz\\
&\leq \frac{C}{\lambda^{\frac{N-2s}{2}}}\int_{\mathbb{R}^N}\frac{t^{2s-1}}{(t+|z|)^{N+2s-1}} \frac{1}{(1+|y-z-x_{i}|)^{N-2s+1}}dz\\
&\leq \frac{C}{\lambda^{\frac{N-2s}{2}}}\frac{1}{(1+|y-x_{i}|)^{N-2s+1}}.
\end{aligned}
\end{equation}
\end{proof}

For any $\delta>0$, we define the following two sets
$$D_1=\{Y=(y,t):\delta<|Y-(r_0,y_0'',0)|<6\delta,t>0\},$$
and
$$D_2=\{Y=(y,t):2\delta<|Y-(r_0,y_0'',0)|<5\delta,t>0\}.$$
\begin{lemma}\label{le:lemma appendixB.4}
 For any $\delta>0$, there is a $\rho=\rho(\delta)\in(2\delta, 5\delta)$ such that
\begin{equation}\label{B.4.1}
\begin{aligned}
\int_{\partial''\mathcal B^{+}_{\rho}}t^{1-2s}|\nabla\widetilde{\phi}|^2dS\leq \frac{Ck\|\phi\|_*^2}{\lambda^{\tau}},
\end{aligned}
\end{equation}
where $C$ is a constant, dependent on $\delta$.
\end{lemma}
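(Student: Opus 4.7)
The overall plan is a slicing argument combined with a Caccioppoli-type energy inequality. Since the conclusion only requires the existence of one good $\rho \in (2\delta, 5\delta)$, I will integrate the surface quantities in $\rho$ and extract a suitable level. Concretely, by Fubini,
\[
\int_{2\delta}^{5\delta} \int_{\partial''\mathcal B^{+}_{\rho}} t^{1-2s}|\nabla\widetilde{\phi}|^{2}\, dS\, d\rho = \int_{D_2} t^{1-2s}|\nabla\widetilde{\phi}|^2\, dY,
\]
so if the volume integral on the right is bounded by $Ck\|\phi\|_*^2/\lambda^{\tau}$, then for some $\rho \in (2\delta, 5\delta)$ the surface integral is bounded by $(1/3\delta)$ times that, giving the claim.

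To control the bulk integral on $D_2$, I will apply the Caccioppoli estimate adapted to the degenerate weight $t^{1-2s}$. Pick a smooth cutoff $\eta$ with $\eta \equiv 1$ on $D_2$ and $\supp\eta \subset D_1$. Multiplying $\mathrm{div}(t^{1-2s}\nabla\widetilde{\phi}) = 0$ by $\widetilde{\phi}\eta^{2}$, integrating over $\mathbb{R}^{N+1}_{+}$, using integration by parts together with the boundary condition
$-\lim_{t\to 0^+} t^{1-2s}\partial_{t}\widetilde{\phi} = \omega_{s}(-\Delta)^{s}\phi$, and applying Young's inequality to absorb the cross term, I obtain
\[
\int_{D_2} t^{1-2s}|\nabla\widetilde{\phi}|^{2} \leq C\int_{D_1} t^{1-2s}|\widetilde{\phi}|^{2} + C\Bigl|\int_{D_1\cap\{t=0\}} \eta^{2}\,\phi\,(-\Delta)^{s}\phi\, dy\Bigr|.
\]

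For the first term on the right, the same pointwise argument that yields \eqref{B.4.2} also controls the solid annular $L^{2}$-integral: from the Poisson representation of $\widetilde\phi$, the bound $\|\phi\|_{*} \leq C\lambda^{-s-\sigma}$, and Lemma~\ref{Lemma appendixa} one gets a pointwise estimate for $|\widetilde{\phi}(y,t)|$ by a sum of decaying bubbles centered at the $x_j$. Integrating $t^{1-2s}|\widetilde\phi|^{2}$ on $D_1$ via polar coordinates and Lemma~\ref{Lemma appendix1} produces a bound of the required form $Ck\|\phi\|_*^{2}/\lambda^{\tau}$, with the hypothesis $N > 4s + 2\tau$ ensuring the sum in $k$ gives only a factor of $k$.

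The main obstacle is the boundary integral $\int \eta^{2}\,\phi\,(-\Delta)^{s}\phi$. To handle it I will substitute the equation \eqref{problem5} to replace $(-\Delta)^{s}\phi$ by
\[
-V(r,y'')\phi + (2_{s}^{*}-1)Z_{\bar r,\bar y'',\lambda}^{2_{s}^{*}-2}\phi + \mathcal{F}(\phi) + l_{k} + \sum_{l=1}^{N} c_{l}\sum_{i=1}^{k} Z_{x_{i},\lambda}^{2_{s}^{*}-2}Z_{i,l},
\]
so the boundary term splits into a sum of weighted $L^{1}$-integrals of $|\phi|\cdot(\text{something})$ on $D_1\cap\{t=0\}$. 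Each piece is estimated using the $\ast$-bound on $\phi$, the $\ast\ast$-estimates $\|\mathcal F(\phi)\|_{**}$ and $\|l_{k}\|_{**}$ from Lemmas~\ref{lemma5}--\ref{lemma6}, the bound $|c_{l}| \leq C\lambda^{-s-\sigma}$ from Proposition~\ref{proposition2}, and repeated use of Lemma~\ref{Lemma appendix1} to absorb products of bubbles centered at distinct $x_{j}$. Once again, $N > 4s + 2\tau$ is exactly what makes all resulting sums in $k$ and negative powers of $\lambda$ absorb into $Ck\|\phi\|_*^{2}/\lambda^{\tau}$, completing the estimate.
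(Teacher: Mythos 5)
Your proof follows the same two main ideas as the paper: (i) reduce the surface bound to a bulk bound on $D_2$ by Fubini plus a mean-value-of-integrals argument, and (ii) control the bulk integral via a weighted Caccioppoli estimate with a cutoff $\eta$ supported in $D_1$ and the pointwise bound \eqref{B.4.2} on $\widetilde{\phi}$. The one genuine difference is that you carefully retain the boundary term
\[
-\omega_s\int_{D_1\cap\{t=0\}}\eta^{2}\,\phi\,(-\Delta)^{s}\phi\,dy
\]
produced by integrating $\mathrm{div}(t^{1-2s}\nabla\widetilde\phi)\,\eta^{2}\widetilde\phi$ by parts over a half-annulus whose flat face lies on $\{t=0\}$, whereas the paper writes the integration by parts with no boundary term at all. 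Since $\eta$ cannot vanish on the flat face while being $\equiv 1$ on $D_2$ and $C^\infty$, that term is not identically zero, so the paper's derivation is (at least as written) incomplete on this point. Your resolution --- substituting the equation \eqref{problem5} for $(-\Delta)^s\phi$, noting that on the annulus $\delta<|y-(r_0,y_0'')|<6\delta$ every bubble factor satisfies $1+\lambda|y-x_j|\gtrsim\lambda$, and then invoking the $\ast$-bound on $\phi$, the $\ast\ast$-bounds of Lemmas~\ref{lemma5}--\ref{lemma6}, the bound on the $c_l$, and Lemma~\ref{Lemma appendix1} --- does give the required $Ck\|\phi\|_*^2/\lambda^{\tau}$ for each piece (using $k\sim\lambda^{\tau}$). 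So your argument is correct and in fact slightly more complete than the paper's.
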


\begin{proof}
By \eqref{lemmaA3 1}, for $(y,t)\in D_1$, we have
\begin{equation}\label{B.4.2}
\begin{aligned}
|\widetilde{\phi}(y,t)|&=\Big|\int_{\mathbb{R}^N}\beta(N,s)\frac{t^{2s}}
{(|y-\xi|^2+t^2)^{\frac{N+2s}{2}}}\phi(\xi)d\xi\Big|\\
&\leq \frac{C\|\phi\|_{\ast}t^{2s}}{\lambda^\tau}
\sum^{k}_{i=1}\int_{\mathbb{R}^N}\frac{1}{(|z|+t)^{N+2s}}
\frac{1}{|y-z-x_{i}|^{\frac{N-2s}{2}+\tau}}dz\\
&\leq \frac{C\|\phi\|_{\ast}t^{2s}}{\lambda^\tau}
\sum^{k}_{i=1}(\frac{1}{(1+|y-x_{i}|)^{\frac{N-2s}{2}+\tau}}\frac{1}{t^{2s}}+\frac{1}{(1+|y-x_{i}|)^{\frac{N+2s}{2}+\tau}})\\
&\leq \frac{C\|\phi\|_{\ast}}{\lambda^\tau}
\sum^{k}_{i=1}\frac{1}{(1+|y-x_{i}|)^{\frac{N-2s}{2}+\tau}}.
\end{aligned}
\end{equation}
Take $\varphi\in C_0^{\infty}(\mathbb{R}^{N+1})$ be a function with $\varphi(y,t)=1$ in $D_2$, $\varphi(y,t)=0$ in $\mathbb{R}^{N+1}\setminus D_1$ and $|\nabla\varphi|\leq C$.
Note that $\widetilde{\phi}$ satisfies
\begin{equation}
\left\{\begin{aligned}
&\displaystyle-\mathrm{div}(t^{1-2s}\nabla\widetilde{\phi})=0\quad \mbox{in} \quad \mathbb{R}^{N+1}_+,\\
&\quad\displaystyle-\lim\limits_{t\to 0}t^{1-2s}\partial_t \widetilde{\phi}(y,t)\\&=-V(r,y'')\phi+(2_s^*-1)(Z_{\overline{r},\overline{y}'',\lambda})^{2_s^*-2}\phi+\mathcal {F}(\phi)+l_k+\sum\limits_{l=1}^Nc_l\sum\limits_{i=1}^kZ^{2_s^*-2}_{x_i,\lambda}Z_{i,l}, \quad \mbox{in}\quad {\Bbb R}^{N}.
\end{aligned}\right.
\end{equation}
Multiplying $\varphi^2\widetilde{\phi}$  on the  both sides of the equation and integrating by parts over $D_1$, we have
\begin{equation*}
\begin{aligned}
0=&\int_{D_1}-\mathrm{div}(t^{1-2s}\nabla\widetilde{\phi})\varphi^2\widetilde{\phi}dydt\\
=&\int_{D_1}t^{1-2s}\nabla\widetilde{\phi}\nabla(\varphi^2\widetilde{\phi})dydt\\
=&\int_{D_1}t^{1-2s}\nabla\widetilde{\phi}(\varphi^2\nabla\widetilde{\phi}+2\varphi\nabla\varphi\widetilde{\phi})dydt.
\end{aligned}
\end{equation*}
For any $\epsilon>0$, we have
\begin{equation*}
\begin{aligned}
&\quad\int_{D_1}t^{1-2s}\nabla\widetilde{\phi}\varphi\nabla\varphi\widetilde{\phi}dydt\\
&\leq \epsilon\int_{D_1}t^{1-2s}|\nabla\widetilde{\phi}|^2\varphi^2dydt     +C(\epsilon)\int_{D_1}t^{1-2s}\widetilde{\phi}^2|\nabla\varphi|^2dydt.
\end{aligned}
\end{equation*}
Taking $\epsilon=\frac{1}{4}$ and using \eqref{B.4.2}, we obtain that
\begin{equation*}
\begin{aligned}
\int_{D_2}t^{1-2s}|\nabla\widetilde{\phi}|^2dydt&\leq C\int_{D_1}t^{1-2s}\widetilde{\phi}^2|\nabla\varphi|^2\\
&\leq \frac{C\|\phi\|_*^2}{\lambda^{2\tau}}\int_{D_1}t^{1-2s}\big(\sum\limits_{i=1}^k\frac{1}{(1+|y-x_i|)^{\frac{N-2s}{2}+\tau}}\big)^2\\
&\leq \frac{C\|\phi\|_*^2}{\lambda^{2\tau}}\int_{D_1}\frac{t^{1-2s}k^2}{(1+|y-x_1|)^{N-2s+2\tau}}\\
&\leq \frac{Ck\|\phi\|_*^2}{\lambda^{\tau}}.
\end{aligned}
\end{equation*}
By using the mean value theorem of integrals, there is a $\rho=\rho(\delta)\in(2\delta, 5\delta)$ such that
\begin{equation*}
\begin{aligned}
\int_{\partial''\mathcal B^{+}_{\rho}}t^{1-2s}|\nabla\widetilde{\phi}|^2dS\leq \frac{Ck\|\phi\|_*^2}{\lambda^{\tau}}.
\end{aligned}
\end{equation*}
\end{proof}

\section{Energy expansion}

In this section, we give some estimates of the energy expansions for
$$\langle I'(Z_{\overline{r},\overline{y}'',\lambda}+\phi(\overline{r},\overline{y}'',\lambda)),\frac{\partial Z_{\overline{r},\overline{y}'',\lambda}}{\partial\lambda}\rangle,$$ $$\langle I'(Z_{\overline{r},\overline{y}'',\lambda}+\phi(\overline{r},\overline{y}'',\lambda)),\frac{\partial Z_{\overline{r},\overline{y}'',\lambda}}{\partial\overline{r}}\rangle \hbox{and} \langle I'(Z_{\overline{r},\overline{y}'',\lambda}+\phi(\overline{r},\overline{y}'',\lambda)),\frac{\partial Z_{\overline{r},\overline{y}'',\lambda}}{\partial\overline{y}''}\rangle.$$

\begin{lemma}\label{exp1}
If $N>4s+\tau$, then
\[
\frac{\partial I(Z_{\bar{r},\bar{y}'',\lambda})}{\partial \lambda}=k\left(-\frac{B_1}{\lambda^{2s+1}}V(\bar{r},\bar{y}'')+ \sum_{j=2}^{k}\frac{B_2}{\lambda^{N-2s+1}|x_j-x_1|^{N-2s}}+O\bigg(\frac{1}{\lambda^{2s+1+\sigma}}\bigg)\right),
\]
where $B_1$ and $B_2$ are some positive constants.
\end{lemma}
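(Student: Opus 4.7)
The plan is to start from the testing identity
\[
\frac{\partial I(Z_{\bar r,\bar y'',\la})}{\partial\la}
= \int_{\mathbb{R}^N}\bigl[(-\Delta)^s Z+VZ-Z^{2^*_s-1}\bigr]\,\partial_\la Z\,dy
= -\int_{\mathbb{R}^N} l_k\,\partial_\la Z\,dy,
\]
where $Z=Z_{\bar r,\bar y'',\la}$ and the second equality is the defining relation $(-\Delta)^s Z+VZ-Z^{2^*_s-1}=-l_k$ from Section~2. I split $l_k=J_1+J_2+J_3$ into the three pieces already introduced there (nonlinearity mismatch, potential, cutoff commutator) and pair each with $\partial_\la Z$ separately; by the $k$-fold symmetry all integrals reduce to $k$ times a local integral concentrated on $\Omega_1$.

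The potential piece $-\int J_2\,\partial_\la Z=\frac12\partial_\la\!\int VZ^2$ produces the first main term. Keeping only the diagonal in $\int VZ^2=\sum_i\int VZ_{x_i,\la}^2+\text{(cross)}$, the change of variables $z=\la(y-x_1)$ together with the Taylor expansion of $V$ at $(\bar r,\bar y'')$ gives $\int V Z_{x_1,\la}^2\,dy=V(\bar r,\bar y'')\la^{-2s}\int U_{0,1}^2\,dz+O(\la^{-2s-1})$, so that differentiation in $\la$ yields $-kB_1 V(\bar r,\bar y'')/\la^{2s+1}$ with $B_1=s\int U_{0,1}^2$. The cross products $\int VZ_{x_i,\la}Z_{x_j,\la}$ ($i\ne j$) are bounded by $O(1/(\la^{N-2s}|x_i-x_j|^{N-2s}))$ via Lemma~\ref{Lemma appendix1}, and after differentiation in $\la$ and summation over $j$ fall inside the claimed error.

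The nonlinear piece supplies the interaction term. On $\Omega_1$, where $\zeta\equiv 1$ on the bulk of the support and $U_{x_1,\la}$ dominates, the binomial expansion gives
\[
Z^{2^*_s-1}-\zeta\sum_j U_{x_j,\la}^{2^*_s-1}=(2^*_s-1)U_{x_1,\la}^{2^*_s-2}\sum_{j\ne 1}U_{x_j,\la}+\text{(lower order)}.
\]
Pairing with $\partial_\la Z\approx\partial_\la Z_{x_1,\la}$, approximating $U_{x_j,\la}(y)\approx U_{x_j,\la}(x_1)\sim C\la^{-(N-2s)/2}|x_1-x_j|^{-(N-2s)}$ for $y$ near $x_1$ (legitimate since $\la|x_1-x_j|\to\infty$), and using the identity $\int U_{x_1,\la}^{2^*_s-2}\partial_\la U_{x_1,\la}\,dy=\frac1{2^*_s-1}\partial_\la\!\int U_{x_1,\la}^{2^*_s-1}\,dy$ evaluated by explicit scaling, I obtain the interaction sum $k\sum_{j\ge 2}B_2/(\la^{N-2s+1}|x_1-x_j|^{N-2s})$ with a positive constant $B_2=B_2(N,s)$. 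Quadratic binomial remainders and the piece supported on $\{0<\zeta<1\}$ are of higher order and absorbed into the error.

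The main obstacle is controlling $-\int J_3\,\partial_\la Z$, the cutoff-commutator contribution. The sharp pointwise estimates on $R_j$ already derived inside the proof of Lemma~\ref{lemma6} (by a parity cancellation in the principal value defining $R_j$ that kills the first-order Taylor term of $\zeta(y)-\zeta(x)$) together with the extra $\la^{-1}$ factor in $\partial_\la Z$ and the weighted convolution bounds in Lemmas~\ref{Lemma appendix1}--\ref{Lemma appendix2}, yield $|\int J_3\,\partial_\la Z|\le Ck\la^{-2s-1-\sigma}$ for some (possibly smaller) $\sigma>0$; this is where the hypothesis on $N$ is used. Assembling the three contributions gives the stated expansion.
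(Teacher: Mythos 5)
The proposal takes a genuinely different route from the paper, and the route as written has a gap.

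\textbf{Comparison of routes.} The paper's first move is to replace the cutoff approximation $Z_{\bar r,\bar y'',\la}=\zeta\sum_j U_{x_j,\la}$ by the uncut $Z^*_{\bar r,\bar y'',\la}=\sum_j U_{x_j,\la}$, asserting $\partial_\la I(Z)=\partial_\la I(Z^*)+O(k\la^{-2s-1-\sigma})$, and then it computes $\partial_\la I(Z^*)=I_1-I_2$ using that $(-\De)^s U_{x_j,\la}=U_{x_j,\la}^{2^*_s-1}$ exactly; the cutoff commutator never appears. You instead keep the cutoff throughout and use $\partial_\la I(Z)=-\int l_k\,\partial_\la Z$ with the decomposition $l_k=J_1+J_2+J_3$ from Section~2. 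Your testing identity and the extraction of the two main terms from $J_1$ and $J_2$ are correct: the computation of the $J_2$-contribution via $\tfrac12\partial_\la\int V Z^2$ gives $B_1=s\int U_{0,1}^2$, which agrees with the paper's $B_1=-\tfrac{(N-2s)C^2(N,s)}{2}\int\frac{1-|z|^2}{(1+|z|^2)^{N-2s+1}}dz$ (the two expressions are equal, as one checks by writing $2\int U\partial_\la U=\partial_\la\int U^2$), and the $J_1$-contribution reproduces the interaction term with a positive $B_2$.

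\textbf{The gap.} Your control of the cutoff commutator $\int J_3\,\partial_\la Z$ is insufficient. The bounds proved inside Lemma~\ref{lemma6} are exactly the $\|\cdot\|_{**}$-type pointwise estimates $|J_3(y)|\le\frac{C}{\la^{s+\sigma}}\la^{\frac{N+2s}{2}}\sum_j(1+\la|y-x_j|)^{-(\frac{N+2s}{2}+\tau)}$. Combining them with $|\partial_\la Z|\le C\la^{-1}\sum_j\la^{\frac{N-2s}{2}}(1+\la|y-x_j|)^{-(N-2s)}$ and the weighted-sum estimates (Lemma~\ref{Lemma appendix1}, with the usual counting and $\la\sim k^{\frac{N-2s}{N-4s}}$) produces $\int|J_3|\,|\partial_\la Z|\le C\,k\,\la^{-s-1-\sigma}$, not the required $C\,k\,\la^{-2s-1-\sigma}$; you lose a full factor $\la^{s}$. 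The reason is that $\|J_3\|_{**}$ dramatically overestimates $J_3$ near the concentration points: at $y=x_j$ the norm bound allows $|J_3|\sim\la^{\frac{N+2s}{2}-s-\sigma}$, whereas the actual commutator there is $O(\la^{-\frac{N-2s}{2}})$ (all the mass of $(1-\zeta)U_{x_j,\la}$ sits at distance $\sim\delta$ from $x_j$). A sharper pointwise bound for $J_3$ in the bulk region would indeed make the term negligible, but the "sharp pointwise estimates from Lemma~\ref{lemma6}'' you invoke are precisely the $\|\cdot\|_{**}$ ones and do not carry that information. The paper sidesteps the whole issue by passing to $Z^*$ before differentiating, so no $J_3$ ever needs to be integrated against $\partial_\la Z$.

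In short: the main-term computations are correct and give the right constants, but the treatment of $J_3$ needs a finer analysis (or the paper's $Z\mapsto Z^*$ substitution) to reach the stated error $O(k\la^{-2s-1-\sigma})$.
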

\begin{proof}
By a direct computation, we have
\begin{equation}
\begin{split}
&\quad \frac{\partial I(Z_{\bar{r},\bar{y}'',\lambda})}{\partial \lambda}\\&=\frac{\partial I(Z^*_{\bar{r},\bar{y}'',\lambda})}{\partial \lambda}+O(\frac{k}{\lambda^{2s+1+\sigma}})\\&
=\ds \int_{\mathbb{R}^N}V(y)Z^*_{\bar{r},\bar{y}'',\lambda}\frac{\partial Z^*_{\bar{r},\bar{y}'',\lambda}}{\partial \lambda}-\ds \int_{\mathbb{R}^N}\big((Z^*_{\bar{r},\bar{y}'',\lambda})^{2^*_s-1}-\sum_{j=1}^kU_{x_j,\lambda}^{2^*_s-1}\big)
\frac{\partial Z^*_{\bar{r},\bar{y}'',\lambda}}{\partial \lambda}+O(\frac{k}{\lambda^{2s+1+\sigma}})\\
&=I_1-I_2+O(\frac{k}{\lambda^{2s+1+\sigma}}).
\end{split}
\end{equation}

For the term $I_1$, By Lemma \ref{Lemma appendix1}, we can check that
\begin{equation}\label{VV}
\begin{array}{ll}
\ds I_1
&=k\bigg(\ds \int_{\mathbb{R}^N}V(y)U_{x_1,\lambda}\frac{\partial U_{x_1,\lambda}}{\partial \lambda}+O\big(\frac{1}{\lambda}\ds \int_{\mathbb{R}^N}U_{x_1,\lambda}\ds \sum_{j=2}^kU_{x_j,\lambda}\big)\bigg)\\
&=k\bigg(\ds \frac{V(\bar{r},\bar{y}'')}{\lambda^{2s+1}}\ds \frac{(N-2s)C^2(N,s)} {2}\int_{\mathbb{R}^N}\frac{\lambda^N(1-\lambda^2|y-x_1|^2)}{(1+\lambda^2|y-x_1|^2)^{N-2s+1}}dy\\
&\quad\quad\quad+O\big(\frac{1}{\lambda^{2s+1}}\ds \sum_{j=2}^k\frac{1}{(\lambda|x_1-x_j|)^{N-4s-\sigma}}\big)
+O(\frac{k}{\lambda^{2s+1+\sigma}})\bigg)\\
&=k\bigg(\ds-\frac{B_1V(\bar{r},\bar{y}'')}{\lambda^{2s+1}}\ds+O(\frac{k}{\lambda^{2s+1+\sigma}})\bigg),\\
\end{array}
\end{equation}
where $B_1=-\frac{(N-2s)C^2(N,s)} {2}\displaystyle\int_{\mathbb{R}^N}\frac{\lambda^N(1-\lambda^2|y-x_1|^2)}{(1+\lambda^2|y-x_1|^2)^{N-2s+1}}dy>0$.

Next, we estimate $I_2$.
$$\aligned
I_2&=k\ds \int_{\Omega_1}\big((Z^*_{\bar{r},\bar{y}'',\lambda})^{2^*_s-1}-\ds\sum_{j=1}^kU_{x_j,\lambda}^{2^*_s-1}\big)
\frac{\partial Z^*_{\bar{r},\bar{y}'',\lambda}}{\partial \lambda}\\&=k\left(\ds \int_{\Omega_1}(2^*_s-1)U_{x_1,\lambda}^{2^*_s-2}\ds\sum_{j=2}^kU_{x_j,\lambda}\frac{\partial U_{x_1,\lambda}}{\partial \lambda}+O\bigg(\frac{1}{\lambda^{2s+1+\sigma}}\bigg)\right)\\&=k\left(-\ds \sum_{j=2}^k\frac{B_2}{\lambda^{N-2s+1}|x_j-x_1|^{N-2s}}+O\bigg(\frac{1}{\lambda^{2s+1+\sigma}}\bigg)\right),
\endaligned$$
for some constant $B_2>0$.

Thus, we obtain that
\[
\frac{\partial I(Z_{\bar{r},\bar{y}'',\lambda})}{\partial \lambda}=k\left(-\frac{B_1}{\lambda^{2s+1}}V(\bar{r},\bar{y}'')+\ds \sum_{j=2}^{k}\frac{B_2}{\lambda^{N-2s+1}|x_j-x_1|^{N-2s}}+O\bigg(\frac{1}{\lambda^{2s+1+\sigma}}\bigg)\right).
\]

\end{proof}

\begin{lemma} \label{exp2}  We have
\begin{equation} \label{energy expansion1}
\begin{aligned}
&\langle I'(Z_{\overline{r},\overline{y}'',\lambda}+\phi),\frac{\partial Z_{\overline{r},\overline{y}'',\lambda}}{\partial\lambda}\rangle\\
=&\langle I'(Z_{\overline{r},\overline{y}'',\lambda}),\frac{\partial Z_{\overline{r},\overline{y}'',\lambda}}{\partial\lambda}\rangle+O(\frac{k}{\lambda^{2s+1+\sigma}})\\
=&k\left(-\frac{B_1}{\la^{2s+1}}V(\bar{r},\bar{y}'')+ \sum_{j=2}^{k}\frac{B_2}{\la^{N-2s+1}|x_j-x_1|^{N-2s}}+O\big(\frac{1}{\la^{2s+1+\sigma}}\big)\right)
\\=&k\left(-\frac{B_1}{\la^{2s+1}}V(\bar{r},\bar{y}'')+ \frac{B_3k^{N-2s}}{\la^{N-2s+1}}+O\big(\frac{1}{\la^{2s+1+\sigma}}\big)\right),
\end{aligned}
\end{equation}
where $B_1$ and $B_2$ are the same constants in Lemma \ref{exp1}, $B_3>0.$
\end{lemma}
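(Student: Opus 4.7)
My plan for proving Lemma~\ref{exp2} proceeds in three steps; write $Z:=Z_{\bar r,\bar y'',\lambda}$ for brevity.

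\emph{Step 1: the $\phi$-correction is negligible in the pairing.} I will show that
\[
\langle I'(Z+\phi),\partial_\lambda Z\rangle=\langle I'(Z),\partial_\lambda Z\rangle+O\bigl(k/\lambda^{2s+1+\sigma}\bigr).
\]
Subtracting and using $(Z+\phi)_+^{2^*_s-1}-Z^{2^*_s-1}=(2^*_s-1)Z^{2^*_s-2}\phi+\mathcal F(\phi)$, the difference becomes $\int\bigl[(-\Delta)^s\phi+V\phi-(2^*_s-1)Z^{2^*_s-2}\phi-\mathcal F(\phi)\bigr]\partial_\lambda Z\,dy$, which by the equation~\eqref{problem5} satisfied by $\phi$ equals $\int l_k\,\partial_\lambda Z\,dy+\sum_{l=1}^{N}c_l\sum_{i=1}^{k}\int Z^{2^*_s-2}_{x_i,\lambda}Z_{i,l}\,\partial_\lambda Z\,dy$. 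Using $\|l_k\|_{**}\le C\lambda^{-(s+\sigma)}$ from Lemma~\ref{lemma6} and $|c_l|\le C\lambda^{-(s+\sigma)}$ from Proposition~\ref{proposition2}, together with the pointwise bound $|\partial_\lambda Z_{x_j,\lambda}|\le C\lambda^{(N-2s)/2-1}/(1+\lambda|y-x_j|)^{N-2s}$ and the integral estimates of Appendix~A (Lemmas~\ref{Lemma appendix1} and~\ref{Lemma appendix2}) after the change of variables $z=\lambda y$, I obtain the desired $O(k/\lambda^{2s+1+\sigma})$ bound for the $l_k$ piece. The Lagrange-multiplier piece is handled by computing $\sum_i\int Z^{2^*_s-2}_{x_i,\lambda}Z_{i,l}\partial_\lambda Z_{x_j,\lambda}=O(k\lambda^{n_l-1})$, analogous to~\eqref{equality11}; combined with $|c_l|\le C\lambda^{-(s+\sigma)}$ and the exponents $n_1=-1$, $n_l=1$ for $l\ge 2$, this yields the same bound.

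\emph{Step 2: leading-order expansion.} The identity $\langle I'(Z),\partial_\lambda Z\rangle=\partial_\lambda I(Z)$ and Lemma~\ref{exp1} immediately produce the middle line of~\eqref{energy expansion1}.

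\emph{Step 3: reducing the discrete sum to $k^{N-2s}$.} Writing $|x_j-x_1|=2\bar r\sin(\pi(j-1)/k)$ and using the symmetry $j\leftrightarrow k+2-j$,
\[
\sum_{j=2}^{k}\frac{1}{|x_j-x_1|^{N-2s}}=\frac{2}{(2\bar r)^{N-2s}}\sum_{j=1}^{\lfloor k/2\rfloor}\frac{1}{\sin^{N-2s}(\pi j/k)}.
\]
Since $N>4s+2\tau$ and $s>\tfrac12$ imply $N-2s>1$, the series $\sum_{m\ge 1}(\pi m)^{-(N-2s)}$ converges, and a standard asymptotic comparison using $\sin(\pi j/k)\sim \pi j/k$ for $j=o(k)$ yields $\sum_{j=2}^{k}|x_j-x_1|^{-(N-2s)}=B_3'\,k^{N-2s}+O(k^{N-2s-1})$ for an explicit positive $B_3'$. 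Multiplying by $B_2/\lambda^{N-2s+1}$ and using $\lambda\sim k^{(N-2s)/(N-4s)}$ to absorb the $O(k^{N-2s-1}/\lambda^{N-2s+1})$ remainder into $O(k/\lambda^{2s+1+\sigma})$ produces the final equality with $B_3:=B_2 B_3'>0$.

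The hardest part will be the sharp estimate of $\int l_k\,\partial_\lambda Z$ in Step~1: the naive product $\|l_k\|_{**}\cdot\|\partial_\lambda Z\|_*$ delivers only $O(k/\lambda^{s+1+\sigma})$, a factor $\lambda^s$ short of what is required. The additional gain must be extracted by decomposing each bilinear integral into its diagonal and off-diagonal contributions via Lemma~\ref{Lemma appendix1} with an interpolation exponent $\gamma$ chosen close to the limiting value $(N-2s)/2+\tau$, after which the off-diagonal geometric sum is controlled through $\lambda|x_i-x_j|\sim \lambda^{1-\tau}|i-j|$. A parallel treatment, using $\|\phi\|_*\le C\lambda^{-(s+\sigma)}$ and $\|\mathcal F(\phi)\|_{**}\le C\|\phi\|_*^{\min(2,2^*_s-1)}$ from Lemma~\ref{lemma5}, will absorb the $\mathcal F(\phi)$ contribution implicitly hidden in the bracket before invoking~\eqref{problem5}.
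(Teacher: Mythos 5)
Your Step 1 is algebraically correct but conceptually self-defeating, and the estimate you are hoping to extract for $\int l_k\,\partial_\lambda Z$ is actually \emph{false}. Observe that by the definition of $l_k$ (recall $(-\Delta)^s Z_{x_j,\lambda}=\zeta U_{x_j,\lambda}^{2^*_s-1}+c(N,s)\,\mathrm{P.V.}\!\int\frac{(\zeta(y)-\zeta(x))U_{x_j,\lambda}(x)}{|x-y|^{N+2s}}\,dx$) one has the pointwise identity $l_k=-\bigl[(-\Delta)^s Z_{\bar r,\bar y'',\lambda}+V Z_{\bar r,\bar y'',\lambda}-Z_{\bar r,\bar y'',\lambda}^{2^*_s-1}\bigr]=-I'(Z_{\bar r,\bar y'',\lambda})$, so $\int l_k\,\partial_\lambda Z=-\langle I'(Z),\partial_\lambda Z\rangle$, which by Lemma~\ref{exp1} has the nonvanishing leading term $-k\bigl(-B_1 V(\bar r,\bar y'')\lambda^{-(2s+1)}+\cdots\bigr)$, of order exactly $k/\lambda^{2s+1}$ with a nonzero coefficient. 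Consequently no amount of interpolation, diagonal/off-diagonal splitting, or finer use of Lemma~\ref{Lemma appendix1} can improve the bound to $O(k/\lambda^{2s+1+\sigma})$; the ``factor $\lambda^s$'' you believe is still missing is not recoverable. Your substitution of equation~\eqref{problem5} merely reproduces the tautology $\langle I'(Z+\phi),\partial_\lambda Z\rangle=\sum_l c_l\sum_i\int Z_{x_i,\lambda}^{2^*_s-2}Z_{i,l}\,\partial_\lambda Z$ (which is just \eqref{problem4} paired against $\partial_\lambda Z$), and the main term you want to isolate has been cancelled away rather than exposed.

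The paper avoids this trap by \emph{not} invoking the equation for $\phi$. It decomposes $\langle I'(Z+\phi),\partial_\lambda Z\rangle-\langle I'(Z),\partial_\lambda Z\rangle$ into a linearized piece $kJ_1=k\langle(-\Delta)^s\phi+V\phi-(2^*_s-1)Z^{2^*_s-2}\phi,\partial_\lambda Z_{x_1,\lambda}\rangle$ and a nonlinear piece $J_2=\int\mathcal F(\phi)\,\partial_\lambda Z$, then estimates $J_1$ directly via the inequalities~\eqref{aum16} and~\eqref{equality12}. Those inequalities encode the near-kernel property of $\partial_\lambda Z$ with respect to the linearized operator; they deliver the crucial extra factor $\lambda^{-(s+\sigma)}$ beyond the bare $\|\phi\|_*$, giving $J_1=O(\|\phi\|_*/\lambda^{1+s+\sigma})=O(\lambda^{-(2s+1+\sigma)})$. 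Your substitution erases precisely the cancellation that~\eqref{equality12} exploits. You should discard Step~1 as written and instead prove or cite the analogue of~\eqref{equality12} to control $J_1$, then treat $\mathcal F(\phi)$ via Lemma~\ref{lemma5} as the paper does; Steps~2 and~3 of your proposal are fine and match the paper.
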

\begin{proof}
By symmetry, we have
$$\aligned
&\quad \langle I'(Z_{\overline{r},\overline{y}'',\lambda}+\phi),\frac{\partial Z_{\overline{r},\overline{y}'',\lambda}}{\partial\lambda}\rangle\\
&=\int_{\mathbb{R}^{N}}\big((-\Delta)^s u_k+V(r,y'')u_k-(u_k)_{+}^{2^*_s-1}\big)\frac{\partial Z_{\bar{r},\bar{y}'',\la}}{\partial \la}\\&=\bigg\langle I'(Z_{\bar{r},\bar{y}'',\la}),\frac{\partial Z_{\bar{r},\bar{y}'',\la}}{\partial \la}\bigg\rangle+k\bigg\langle(-\Delta)^s \phi+V(r,y'')\phi-(2^*_s-1)Z_{\bar{r},\bar{y}'',\la}^{2^*_s-2}\phi,\frac{\partial Z_{x_1,\la}}{\partial \la}\bigg\rangle\\&\quad-\ds \int_{\mathbb{R}^{N}}\bigg((Z_{\bar{r},\bar{y}'',\la}+\phi)_+^{2^*_s-1}-Z_{\bar{r},
\bar{y}'',\la}^{2^*_s-1}-(2^*_s-1)Z_{\bar{r},\bar{y}'',\la}^{2^*_s-2}\phi\bigg)\frac{\partial Z_{\bar{r},\bar{y}'',\la}}{\partial \la}\\&:=\bigg\langle I'(Z_{\bar{r},\bar{y}'',\la}),\frac{\partial Z_{\bar{r},\bar{y}'',\la}}{\partial \la}\bigg\rangle+kJ_1-J_2.
\endaligned$$

By \eqref{aum16} and \eqref{equality12}, we have
\[
J_1=O(\frac{||\phi||_*}{\la^{1+s+\sigma}})=O(\frac{1}{\la^{2s+1+\sigma}}).
\]

Note that $(1+t)_+^\gamma-1-\gamma t=O(t^2)$ for all $t\in \mathbb{R}^N$ if $1<\gamma\leq2$, and $|(1+t)_+^\gamma-1-\gamma t|\leq C(t^2+|t|^\gamma)$ for all $t\in \mathbb{R}^N$ if $\gamma>2$. So,
if $2^*_s\leq3$,  we have
$$\aligned
|J_2|&= \bigg|\ds \int_{\mathbb{R}^{N}}\bigg((Z_{\bar{r},\bar{y}'',\la}+\phi)_+^{2^*_s-1}-Z_{\bar{r},
\bar{y}'',\la}^{2^*_s-1}-(2^*_s-1)Z_{\bar{r},\bar{y}'',\la}^{2^*_s-2}\phi\bigg)\frac{\partial Z_{\bar{r},\bar{y}'',\la}}{\partial \la}\bigg| \\
&\leq C\ds \int_{\mathbb{R}^{N}}Z_{\bar{r},
\bar{y}'',\la}^{2^*_s-3}\phi^2\big|\frac{\partial Z_{\bar{r},\bar{y}'',\la}}{\partial \la}\big|\\
& \leq \frac{C||\phi||_*^2}{\la}\ds \int_{\mathbb{R}^{N}}\left(\ds \sum_{j=1}^k\frac{\la^{\frac{N-2s}{2}}}{(1+\la|y-x_j|)^{N-2s}}\right)^{2^*_s-2}\left(\ds \sum_{i=1}^k\frac{\la^{\frac{N-2s}{2}}}{(1+\la|y-x_i|)^{\frac{N-2s}{2}+\tau}}\right)^2
\\&\leq \frac{C||\phi||_*^2}{\la}\ds \int_{\mathbb{R}^{N}}\la^N \sum_{j=1}^k\frac{1}{(1+\la|y-x_j|)^{4s}}\sum_{i=1}^k\frac{1}{(1+\la|y-x_i|)^{N-2s+\tau}}\\
&\leq \frac{Ck||\phi||_*^2}{\la}=O\left(\frac{k}{\la^{2s+1+\sigma}}\right).
\endaligned$$
If $2^*_s>3$, we have
$$\aligned
|J_2|&\leq C\ds \int_{\mathbb{R}^{N}}\left(Z_{\bar{r},
\bar{y}'',\la}^{2^*_s-3}\phi^2\big|\frac{\partial Z_{\bar{r},\bar{y}'',\la}}{\partial \la}\big|+|\phi|^{2^*_s-1}\big|\frac{\partial Z_{\bar{r},\bar{y}'',\la}}{\partial \la}\big|\right)\\&=O\left(\frac{k}{\la^{2s+1+\sigma}}\right).
\endaligned$$
Thus we obtain
\[
\bigg\langle I'(Z_{\bar{r},\bar{y}'',\la}+\phi),\frac{\partial Z_{\bar{r},\bar{y}'',\la}}{\partial \la}\bigg\rangle=\bigg\langle I'(Z_{\bar{r},\bar{y}'',\la}),\frac{\partial Z_{\bar{r},\bar{y}'',\la}}{\partial \la}\bigg\rangle+O\left(\frac{k}{\la^{2s+1+\sigma}}\right).
\]
Combining Lemma \ref{exp1}, we finish the proof.
\end{proof}

Note that $Z_{i,l}=O(\lambda Z_{x_i,\lambda}), \ l=2,\ldots,N.$ Similarly, we can prove the following lemma:
\begin{lemma} \label{exp3}
We have
\begin{equation} \label{energy expansion2}
\begin{aligned}
&\quad\langle I'(Z_{\overline{r},\overline{y}'',\lambda}+\phi),\frac{\partial Z_{\overline{r},\overline{y}'',\lambda}}{\partial\overline{r}}\rangle\\
&=\langle I'(Z_{\overline{r},\overline{y}'',\lambda}),\frac{\partial Z_{\overline{r},\overline{y}'',\lambda}}{\partial\overline{r}}\rangle+O(\frac{k}{\lambda^{s+\sigma}})\\
&=k\big(\frac{B_1}{\lambda^{2s}}\frac{\partial V(\overline{r},\overline{y}'')}{\partial\overline{r}}+\sum\limits_{j=2}^k\frac{B_2}{\overline{r}\lambda^{N-2s}|x_1-x_j|^{N-2s}}+O(\frac{1}{\lambda^{s+\sigma}})\big),
\end{aligned}
\end{equation}
and
\begin{equation} \label{energy expansion3}
\begin{aligned}
\langle I'(Z_{\overline{r},\overline{y}'',\lambda}+\phi),\frac{\partial Z_{\overline{r},\overline{y}'',\lambda}}{\partial\overline{y}''_j}\rangle
=\langle I'(Z_{\overline{r},\overline{y}'',\lambda}),\frac{\partial Z_{\overline{r},\overline{y}'',\lambda}}{\partial\overline{y}''_j}\rangle+O(\frac{k}{\lambda^{s+\sigma}})=k\big(\frac{B_1}{\lambda^{2s}}\frac{\partial V(\overline{r},\overline{y}'')}{\partial\overline{y}''_j}+O(\frac{1}{\lambda^{s+\sigma}})\big),
\end{aligned}
\end{equation}
where $B_1$ and $B_2$ are the same constants in Lemma \ref{exp1}.
\end{lemma}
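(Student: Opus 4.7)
\textbf{Proof plan for Lemma \ref{exp3}.} The argument closely parallels the proof of Lemma \ref{exp2}, with the essential modification that the derivative $\partial Z_{\bar{r},\bar{y}'',\lambda}/\partial \bar{r}$ (resp.\ $\partial/\partial \bar{y}''_j$) scales like $\lambda Z_{x_i,\lambda}$ rather than $\lambda^{-1} Z_{x_i,\lambda}$. Accordingly, the tolerable error is $O(k/\lambda^{s+\sigma})$ instead of $O(k/\lambda^{2s+1+\sigma})$, which is precisely the weaker bound flagged in the introduction as the reason Pohozaev identities must replace the $\bar{r}$- and $\bar{y}''$-components of the reduced problem.

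First, I would split
\[
\bigl\langle I'(Z_{\bar{r},\bar{y}'',\lambda}+\phi),\tfrac{\partial Z_{\bar{r},\bar{y}'',\lambda}}{\partial \bar{r}}\bigr\rangle
=\bigl\langle I'(Z_{\bar{r},\bar{y}'',\lambda}),\tfrac{\partial Z_{\bar{r},\bar{y}'',\lambda}}{\partial \bar{r}}\bigr\rangle + kJ_1 - J_2,
\]
where (using the symmetry under $x_j \mapsto x_1$)
\[
J_1 = \bigl\langle (-\Delta)^{s}\phi + V(r,y'')\phi - (2^{*}_{s}-1)Z_{\bar{r},\bar{y}'',\lambda}^{2^{*}_{s}-2}\phi, Z_{1,2}\bigr\rangle,\quad
J_2 = \int_{\mathbb R^{N}} \mathcal F(\phi)\,\tfrac{\partial Z_{\bar{r},\bar{y}'',\lambda}}{\partial \bar{r}},
\]
and $\mathcal F(\phi)$ is defined as in \eqref{problem5}. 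The linear piece $J_1$ is bounded by the very inequalities already proved in \eqref{aum16} and \eqref{equality12}, specialised to $t=2$ (i.e.\ $Z_{1,2}$), which gives $|J_1| \leq C\lambda\|\phi\|_*/\lambda^{s+\sigma}$; together with Proposition \ref{proposition2} this yields $k|J_1| = O(k/\lambda^{s+\sigma})$ after absorbing a small power of $\lambda$ into $\sigma$ (possible because $\|\phi\|_*\le C\lambda^{-s-\sigma}$ can itself be sharpened with any $\sigma$ in the admissible range). For $J_2$ I use Lemma \ref{lemma5} (treating the cases $2^{*}_{s}\leq 3$ and $2^{*}_{s}>3$ separately) combined with $|\partial Z/\partial\bar{r}| = O(\lambda Z)$ and Lemma \ref{Lemma appendix1}, obtaining $|J_2| = O(k\lambda \|\phi\|_*^{2}) = O(k/\lambda^{s+\sigma})$ after the same redefinition.

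Next, to handle the principal term $\langle I'(Z_{\bar{r},\bar{y}'',\lambda}),\partial Z/\partial\bar{r}\rangle$, I follow the strategy of Lemma \ref{exp1} verbatim: first replace $Z_{\bar{r},\bar{y}'',\lambda}$ by $Z^{*}_{\bar{r},\bar{y}'',\lambda}=\sum_{j}U_{x_j,\lambda}$, paying a cutoff correction of order $k/\lambda^{s+\sigma}$ (bounded using the $J_2,J_3$ calculations of Lemma \ref{lemma6}). The remaining integral decomposes into
\[
\int V(y) Z^{*}\tfrac{\partial Z^{*}}{\partial \bar{r}}
-\int \Bigl[(Z^{*})^{2^{*}_{s}-1}-\sum_j U_{x_j,\lambda}^{2^{*}_{s}-1}\Bigr]\tfrac{\partial Z^{*}}{\partial \bar{r}}.
\]
For the potential integral, symmetry and a change of variable $z=\lambda(y-x_1)$ yield $k\bigl(B_1\lambda^{-2s}\partial V/\partial \bar{r}+O(\lambda^{-s-\sigma})\bigr)$ (the $\partial x_j/\partial\bar{r}$ in the chain rule is the unit radial vector, so Taylor expansion of $V$ around $(\bar{r},\bar{y}'')$ produces $\partial V/\partial\bar{r}$ with the same constant $B_1$ as in Lemma \ref{exp1}). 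For the nonlinear integral, restricting to $\Omega_1$ by symmetry and expanding $(Z^{*})^{2^{*}_{s}-1}$ around $U_{x_1,\lambda}$, the leading term comes from $(2^{*}_{s}-1)U_{x_1,\lambda}^{2^{*}_{s}-2}\sum_{j\ge 2}U_{x_j,\lambda}\,\partial U_{x_1,\lambda}/\partial \bar{r}$; using $\partial x_1/\partial \bar{r}=(\cos 0,\sin 0,0)$ one obtains
$-\sum_{j\ge2}\frac{B_2}{\bar{r}\lambda^{N-2s}|x_1-x_j|^{N-2s}}$
with $B_2$ as in Lemma \ref{exp1}, modulo $O(\lambda^{-s-\sigma})$.

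The estimate \eqref{energy expansion3} for $\partial/\partial\bar{y}''_j$ is proved in exactly the same way; the only difference is that the interaction sum disappears, because the centres $x_i=(\bar{r}\cos\theta_i,\bar{r}\sin\theta_i,\bar{y}'')$ share the same $\bar{y}''$-component, so $\partial x_i/\partial \bar{y}''_j$ is independent of $i$ and the sum $\sum_{j\ge 2}\partial_{\bar{y}''_j}|x_1-x_j|^{-(N-2s)}$ vanishes identically. Only the $\partial V/\partial\bar{y}''_j$ term survives from the potential part. The main obstacle throughout is book-keeping the factor $\lambda$ lost (compared with the $\partial/\partial\lambda$ case) every time one substitutes the crude bound $\|\phi\|_*\le C\lambda^{-s-\sigma}$; this forces one to verify that the $\sigma$ supplied by Proposition \ref{proposition2} and Lemma \ref{lemma6} is strictly larger than the deficit $1-s$ produced by the extra $\lambda$, which in turn uses the hypothesis $s\in(\tfrac12,1)$ and $N>4s+2\tau$.
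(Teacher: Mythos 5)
Your proposal follows exactly the strategy the paper itself prescribes (the paper's entire ``proof'' consists of the sentence ``Note that $Z_{i,l}=O(\lambda Z_{x_i,\lambda}),\ l=2,\ldots,N.$ Similarly, we can prove the following lemma''), and the details you supply --- the decomposition into the main term, a linear piece $kJ_1$ and a nonlinear piece $J_2$, the scaling argument for the potential integral, and the observation that the $\bar y''$-interaction sum vanishes because $|x_1-x_j|$ is independent of $\bar y''$ --- are in the right spirit.

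However, the final step of your argument is handwaved in a way that hides a genuine difficulty. You claim that the $O(k/\lambda^{s+\sigma})$ bound on the remainder reduces to the inequality ``$\sigma > 1-s$,'' and that this follows from $s\in(\tfrac12,1)$ and $N>4s+2\tau$. For the linear piece this is essentially right: $kJ_1 = O\bigl(k\lambda\|\phi\|_*/\lambda^{s+\sigma_1}\bigr)$ where $\sigma_1$ is the exponent appearing in \eqref{aum16}--\eqref{equality12}, and one checks that $\sigma_1\approx\min\bigl(s,\tfrac{N-4s}{2}\bigr)$, so $s+\sigma_1>1$ is indeed equivalent to $N>2+2s$, i.e.\ to $N>4s+2\tau$. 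But for the nonlinear piece the relevant exponent is different: $J_2=O\bigl(k\lambda\|\phi\|_*^2\bigr)=O\bigl(k/\lambda^{2s+2\sigma_0-1}\bigr)$ with $\sigma_0$ the constant from Proposition \ref{proposition2}, so here the requirement is $\sigma_0>\tfrac{1-s}{2}$, which is \emph{not} what you wrote and is \emph{not} implied by the hypotheses: tracing $\sigma_0$ back through Lemma \ref{lemma6} shows it behaves like $2s(2s-\tau)/(N-2s)\to 0$ as $N\to\infty$, while $(1-s)/2$ is a fixed positive number. So for $N$ large the naive bound on $J_2$ is strictly worse than $O(k/\lambda^{s+\sigma})$ and the argument as written does not close. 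To be fair, the paper offers no proof at all and never actually uses \eqref{energy expansion2}--\eqref{energy expansion3} (the whole point of switching to the local Pohozaev identities is precisely that this expansion is not sharp enough), so the gap is shared with the original; but a careful proof of the lemma as stated would need either a sharper bound on $\phi$ in the region supporting $\partial Z/\partial\bar r$, or an explicit restriction of $N$ so that $\sigma_0>(1-s)/2$ holds, and you should not claim the verification goes through without exhibiting it.

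Two smaller remarks: your sign ``$-\sum_{j\geq 2}\tfrac{B_2}{\bar r\lambda^{N-2s}|x_1-x_j|^{N-2s}}$'' is the value of the nonlinear integral $I_2$; the reader must remember that $\partial_{\bar r}I=I_1-I_2$ to recover the $+B_2$ in the statement, so it is worth saying so explicitly. And the justification that the $\bar y''$-interaction vanishes is cleaner if phrased as: after an integration by parts the leading interaction equals $\partial_{x_{j,m}}\bigl(C\lambda^{2s-N}|x_1-x_j|^{-(N-2s)}\bigr)\propto (x_{1,m}-x_{j,m})$, which is zero because all $x_i$ share the same $y''$-component.
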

\end{appendices}

\bibliographystyle{springer}
\bibliography{mrabbrev,literatur}
\newcommand{\noopsort}[1]{} \newcommand{\printfirst}[2]{#1}
\newcommand{\singleletter}[1]{#1} \newcommand{\switchargs}[2]{#2#1}

\end{document}